\newlist{prooflist}{description}{1}
\setlist[prooflist]{font=\normalfont \itshape, labelindent = \parindent, leftmargin = 0pt}
\lstdefinelanguage{Magma}{
  morekeywords={
    function,end,for,in,to,do,while,if,then,elif,else,return,
    true,false,repeat,until,break,continue,local,where
  },
  sensitive=true,
  morecomment=[l]{//},      
  morecomment=[s]{/*}{*/}, 
  morestring=[b]",          
}
\tiny\color{gray},
\newtheoremstyle{special-example}
  {}
  {}
  {}
  {\parindent}
  {\bfseries}
  {:}
  { }
  {}
\renewcommand{\tilde}{\widetilde}
\newcommand{\pp}{\mathbb P}
\newcommand{\OO}{\mathcal O}
\newcommand{\wt}{\widetilde}
\newcommand{\inv}{^{-1}}
\newcommand{\I}{\mathrm{i}}
\newcommand\depth{\operatorname{depth}}
\newcommand\Supp{\operatorname{Supp}}
\newcommand{\mat}[1]{\begin{pmatrix} #1\end{pmatrix}}
\DeclareMathOperator{\Sym}{\mathsf{Sym}}
\newcommand{\refl}[1]{^{[{#1}]}}
\newtheoremstyle{Lehn-it}
  {}
  {}
  {\itshape}
  {}
  {\bfseries}
  {$\;$\textmd{---}}
  { }
  {}
\newtheoremstyle{Lehn-up}
  {}
  {}
  {\upshape}
  {}
  {\bfseries}
  {$\;$\textmd{---}}
  { }
  {}
  \newtheoremstyle{up-list}
  {}
  {}
  {\upshape}
  {}
  {\bfseries}
  { }
  { }
  {}
\newtheoremstyle{Lehn-Bemerkung}
  {}
  {}
  {}
  {}
  {\itshape}
  {$\;$\textmd{---}}
  { }
  {}
\newtheoremstyle{citing}
  {}
  {}
  {\itshape}
  {}
  {\bfseries}
  {$\;$\textmd{---}}
  {.5em}
  {\thmnote{#3}}
\numberwithin{equation}{section}
\theoremstyle{Lehn-it}
\newtheorem{thm}[equation]{Theorem}
\newtheorem{lem}[equation]{Lemma}
\newtheorem{prop}[equation]{Proposition}
\newtheorem{cor}[equation]{Corollary}
\theoremstyle{Lehn-up}
\newtheorem{defin}[equation]{Definition}
\newtheorem{exam}[equation]{Example}
\theoremstyle{Lehn-Bemerkung}
\newtheorem{rem}[equation]{Remark}
\theoremstyle{up-list}
\theoremstyle{citing}
\newtheorem*{custom}{}}
\newcommand{\onto}{\twoheadrightarrow}
\newcommand{\into}{\hookrightarrow}
\DeclareFontFamily{OT1}{rsfs}{}
\DeclareFontShape{OT1}{rsfs}{n}{it}{<-> rsfs10}{}
\DeclareMathAlphabet{\curly}{OT1}{rsfs}{n}{it}
\DeclareMathOperator{\Proj}{\mathrm{Proj}}
\DeclareMathOperator{\Hom}{Hom}
\DeclareMathOperator{\Pic}{Pic}
\newcommand{\shom}{\curly{H}om}
\newcommand{\shext}{\curly{E}xt}
\DeclareMathOperator{\Aut}{Aut}
\DeclareMathOperator{\spec}{\rm Spec}
\DeclareMathOperator{\rk}{rk}
\DeclareMathOperator{\tensor}{\otimes}
\newcommand{\isom}{\cong}
\newcommand{\restr}[1]{{\raisebox{-0.1\height}{$|_{#1}$}}}
\renewcommand{\epsilon}{\varepsilon}
\renewcommand{\phi}{\varphi}
\renewcommand{\theta}{\vartheta}
\newcommand{\ke}{{\mathcal E}}
\newcommand{\kf}{{\mathcal F}}
\newcommand{\kg}{{\mathcal G}}
\newcommand{\kh}{{\mathcal H}}
\newcommand{\ki}{{\mathcal I}}
\newcommand{\kl}{{\mathcal L}}
\newcommand{\km}{{\mathcal M}}
\newcommand{\ko}{{\mathcal O}}
\newcommand{\IC}{{\mathbb C}}
\newcommand{\IN}{{\mathbb N}}
\newcommand{\IP}{{\mathbb P}}
\newcommand{\IZ}{{\mathbb Z}}
\title{Half canonical rings of Gorenstein spin curves of genus two}
\author[S. Coughlan]{Stephen Coughlan}
\address{Stephen Coughlan\\Current: Department of Mathematics and Computer Studies\\ Mary Immaculate College\\South Circular Road\\Limerick\\ Ireland}
\address{Previous: Institute of Mathematics of the Polish Academy of Sciences\\ul. \'Sniadeckich 8\\P.O. Box 21\\00-656 Warszawa\\Poland}
\email{stephen.coughlan@mic.ul.ie}
\author[M. Franciosi]{Marco Franciosi}
\address{Marco Franciosi\\Dipartimento di Matematica\\Universit\`a di Pisa \\Largo B. Pontecorvo 5\\I-56127  Pisa\\Italy}
\email{marco.franciosi@unipi.it}
\author[R.Pardini]{Rita Pardini}
\address{Rita Pardini\\Dipartimento di Matematica\\Universit\`a di Pisa \\Largo B. Pontecorvo 5\\I-56127  Pisa\\Italy}
\email{rita.pardini@unipi.it}
\author[S. Rollenske]{S\"onke Rollenske}
\address{S\"onke Rollenske\\FB 12/Mathematik und Informatik\\
Philipps-Universit\"at Marburg\\
Hans-Meerwein-Str. 6\\
35032 Marburg\\
Germany}
\email{rollenske@mathematik.uni-marburg.de}
\begin{document}

\begin{abstract}
We introduce the notion of generalised Gorenstein spin structure on a curve and we give an explicit description
of  the associated section ring for curves of genus two with ample canonical bundle, obtaining five different formats. 
\end{abstract}
\subjclass{14H42; 14H45, 14H20}
\keywords{spin structure, Gorenstein curve, section ring}
\maketitle

\setcounter{tocdepth}{1}
\tableofcontents

\section{Introduction}
Recall that a spin structure (or theta-characteristic)  on a smooth curve $C$ is a line bundle $\kl$ such that there is an isomorphism $\mu\colon\kl\otimes \kl \to \omega_C$. 
The interest in such structures originated in the classical study of theta functions in the 19th century and they were taken up again in 1970's in the context of the study of moduli spaces of curves 
(see \cite{Farkas-theta} for a recent survey). They also occur in the study of 
algebraic surfaces: if $X$ is a smooth projective surface and $C\in |K_X|$ is a 
smooth curve in the canonical linear system, 
then $\kl = K_X|_C$ is a theta-characteristic by the adjunction formula 
and the section ring $R(C, \kl)$ is often a stepping stone to 
compute the canonical ring of $X$ by Reid's hyperplane section principle.

In our study of the moduli space of stable surfaces, the need arose to consider 
a more general notion than what was available in the literature and which we 
hope to be of independent interest.

We define a generalised Gorenstein spin (ggs) structure on a Gorenstein curve  
$C$ as a  torsion-free sheaf $\kl$  of rank one  
at each generic point. 
 such that $\chi(\kl)=0$ together with  a \emph{multiplication map} $\mu\colon \kl\tensor \kl \to \omega_C$ 
which is an isomorphism at the generic points.

\begin{rem}\label{rem: motivation}
Let us sketch our motivation  for this definition.
Let $X$ be a stable surface   $K_X^2=1$ and $h^0(K_X)=2$ such that $2K_X$ is 
Cartier. Then any canonical curve $C\in |K_X|$ is a Gorenstein curve of 
arithmetic genus two  and with ample canonical bundle by adjuction. However, 
$\omega_X\tensor \ko_C$ might have torsion at the non-Gorenstein points of $X$. 
So we don't get a theta characteristic in the classical sense, but dividing out 
the torsion results in a ggs structure on $C$. 

The results of this paper are then an important puzzle piece in the complete 
classification of such surfaces $X$ in \cite{CFPR24}.
\end{rem}

 Inspired by Mumford's work 
\cite{mumford71}, Harris \cite{Ha82} considered the case of arbitrary 
curves with  $\kl$ a line bundle and the multiplication map being  an 
isomorphism. The Gorenstein condition is then implicit and we call this a 
\emph{regular ggs structure}. We will see in  Section \ref{sect: ggs} that 
in favourable situations,   the ggs structures on a curve $C$ are governed by 
regular ggs structures on partial normalisations. 

In later studies of the (compactified) moduli space of spin curves, initiated by 
Cornalba \cite{Cornalba89} and continued for example by Jarvis \cite{Jarvis98} 
and Ludwig \cite{Ludwig10}, spin structures are only considered on 
(semi-)stable curves. In fact, on such curves their  definition agrees with 
ours.\footnote{The condition on the cokernel of the multiplication map in 
\cite[Definition 2.1.2]{Jarvis98} is automatic by Proposition \ref{prop: 
irregular ggs}, Example \ref{example: A sing} and the degree condition.}

In our application to stable 
surfaces, Remark \ref{rem: motivation}, we need to  include  both curves with 
cusps and curves with elliptic tails in our analysis. As  explained in the 
comments  after Definition 1.2 in \cite{FedorchukSmyth2013}, these two classes 
of curves cannot occur at the same time in a separated moduli problem for 
curves, so it is not clear how to deal with  families in our setup. We have 
chosen not to pursue this question here.

Next, we  apply these results to reduced Gorenstein curves of genus two with ample canonical bundle, the class which originally motivated our research. 
 In Section \ref{section: description curves} we give a complete description of such curves,  showing that we can group them 
in   two types, Type A and Type B, distinguished by  the image of the bicanonical map (Proposition \ref{prop: types}) and moreover we classify all possible  ggs structures.
Such curves are hyperelliptic in the sense of \cite{BaBo24}, i.e. they admit a $2:1$ morphism onto a rational (not necessarily irreducible) 
curve (see \cite{Ho99} for the irreducible case).

We then go on and analyse the half-canonical ring of  ggs structures on such curves, that is, 
\[ R(C, \{ \kl, \omega_C\}) = \bigoplus_n R_n\] 
where $R_{2n} = H^0(C, nK_C) $ and $R_{2n+1} = H^0(C, \kl(nK_C))$. Note that this is not a standard section ring, because the multiplication of two elements of odd degree has to 
be defined via the multiplication map $\mu$.

Leaving the precise statements to Theorem \ref{thm: all half canonical rings}  we prove the following: 
\begin{custom}[Theorem]
 Let $( \kl, \mu)$ be a ggs structure on a reduced Gorenstein curve of genus two with ample canonical bundle. Then $h^0(\kl)= 0, 1, 2$ 
 and  we have a  complete description of the the half-canonical ring $R(C, \{\kl, \omega_C\})$  in terms of generators and relations
  according to the type of the curve and to the dimension  $h^0(\kl)$: 
 \begin{center}
  \begin{tabular}{cccc}\toprule
   $h^0(\kl)$ &0 &  1 & 2 \\
   Type of ring & $A(0)$, $B(0)$ &$A(1)$, $B(1)$ & $B(2)$ \\
   \bottomrule
  \end{tabular}
  \end{center}
In cases $A(0)$, $A(1)$, and $B(2)$ we get complete intersections, in cases $B(0)$ and $B(1)$ the rings admit a more complicated presentation in codimension six and four
(see  Table \ref{tab: half-canonical rings} for a complete description).
\end{custom}
Note that in each case we can interpret the ring as giving a natural embedding of $C$ into a weighted projective space $\IP$, such that $\omega_C = \ko_\IP(2)|_C$ and $\kl = \ko_\IP(1)|_C/\text{torsion}$. 

\subsection*{Acknowledgements}
We thank Igor Burban for some help with torsion-free sheaves on curve singularities. S.R.\ is grateful for support by the DFG. 
 M.F.  and R.P. are  partially supported by the project PRIN 
 2022BTA242 ``Geometry of algebraic structures: Moduli, Invariants, Deformations''
  of Italian MUR  and members of GNSAGA of INDAM.  
  
   We thank the anonymous referee for some improvements of the presentation and interesting remarks.

\section{Notation and preliminary results}\label{notation}

\subsection{Set-up}\label{ssec: setup}

We work with schemes of finite type over the complex numbers.

Given a sheaf $\kf$ on a scheme $X$, we denote by $\kf^{\vee}= \shom(\kf, \OO_X)$ its dual and 
 by $\kf^{[m]}$ the $m$-th reflexive power, i.e., the double dual of 
$\kf^{\tensor m}$. The sheaf  $\kf$ is reflexive if $\kf \cong \kf^{\vee\vee}$.

A variety is a reduced (possibly reducible)  Cohen--Macaulay  scheme.  
 
\subsubsection{Curves and partial normalisation} A curve $C$  is a Cohen--Macaulay   projective scheme of pure dimension 1.  It has a  dualising sheaf  $\omega_C$ and we denote by  $p_a(C)$ the arithmetic genus of $C$, that is, $p_a(C)=1-\chi(\OO_C)$. 
 The curve $C$ is Gorenstein if  $\omega_C$ is invertible; if this is the case  $K_C$ denotes a canonical divisor such that $\OO_C(K_C)\cong \omega_C$.
 
 We say that $p \in C$  is a singular point of type $A_{m}$ if 
 the completion of the local ring  $\OO_{C,p}$ is isomorphic to the ring  $\IC\llbracket x,y\rrbracket /(y^2 - x^{m+1}) $.

 Let $C$ be a reduced curve and $f\colon \tilde C \to C$ a finite birational morphism. Then the normalisation of $C$ factors over $f$, which is therefore often called a partial normalisation. 
	
 Note that a partial normalisation might not preserve the Gorenstein property in general: three  general  lines through the origin in $\IC^3$ are not Gorenstein, but map birationally to three general lines through the origin  in the plane, a Gorenstein singularity.
 
 For $A_m$ singularities these kinds of difficulties do not occur.
   
 \begin{exam}\label{example: A sing}
 	Let us describe all partial normalisations locally at a point of type $A_m$ where we have two cases according to the parity of $m$, compare  \cite[9.9, 5.11]{Yoshino}. 
 	\begin{description}
 		\item[$A_{2n}$]
 		The completion of the local ring is isomorphic to the ring  $\IC\llbracket x,y\rrbracket /(y^2 - x^{2n+1})\isom \IC\llbracket t^2, t^{2n+1}\rrbracket $ and the partial normalisations are given by the inclusions 
 		\[ \IC\llbracket t\rrbracket  \supset \IC\llbracket t^2, t^{3}\rrbracket \supset \IC\llbracket t^2, t^{5}\rrbracket \supset \IC\llbracket t^2, t^{7}\rrbracket \supset \dots\]
 		\item[$A_{2n-1}$]
 		The completion of the local ring is isomorphic to $\IC\llbracket x,y\rrbracket /(y^2 - x^{2n})$ and we get a series of inclusions
 		\[ \IC\llbracket x,y\rrbracket /(y-x)\times\IC\llbracket x,y\rrbracket /(y+x)\hookleftarrow \IC\llbracket x,y\rrbracket /(y^2 - x^{2})\supset \IC\llbracket x,xy\rrbracket \supset \IC\llbracket x,x^2y\rrbracket \supset \dots\]
 		where we consider $\IC\llbracket x,x^ky\rrbracket$  as a subring of 
 		$ \IC\llbracket x,y\rrbracket /(y^2 - x^{2})$, so that with $y_k = x^ky$ we get the desired relation $y_k ^2 - x^{2k+2} = 0 $.
 		
 	\end{description}
 	Note that in both series each successive subring has codimension one in the previous ring, when considered as a $\IC$-vector space.
 	
 	From this description we see that any partial normalisation of an $A_n$ singularity is an $A_m$ singularity with $m<n$ of the same parity, or smooth. In particular, it is also Gorenstein.
 \end{exam}

 \subsubsection{Group action} Assume that we have a finite group $G$ acting on a projective variety $X$ and  let
  $\kl$ be a rank one torsion-free  sheaf on $X$.  
  
  A linearisation of $G$ is an action of $G$ on $\kl$ that lifts the action of $G$ on $X$. If $G$ is cyclic, generated by $g$, 
   then the existence of a linearisation of $G$ is equivalent to say that there exists an isomorphism  $g^*\kl \cong \kl$. 

 A linearisation of $\kl$ induces a representation of $G$ on the vector space $H^0(X,\kl)$.  Conversely,  if $\kl$ is ample, one can recover the action of $G$ by considering 
 its action on $R(X,\kl)= \bigoplus_{d \in \IN}  H^0(X, \kl^{\otimes d})$. 
 We will apply these notions to  projective varieties  having an involution, which induces a decomposition of the space of sections in invariant and anti-invariants subspaces.

 \subsection{Weighted projective spaces}
We recall some facts about weighted projective spaces, see \cite{Dol,Fletcher} for details.

 Let $x_0,\dots,x_n$ be coordinates on $\IC^{n+1}$ and let $(w_0, \dots, w_n)$ be positive integers with $\gcd\{w_1,\dots,\widehat{w}_i,\dots,w_n\}=1$ for all $i=1,\dots,n$.
  Then the (well-formed) weighted projective space $\IP(w_0, \dots, w_n)$ is defined as  $\IC^{n+1}/  \sim $, where the equivalence relation $\sim$ is given,  for $\lambda \in \IC^{\ast}$,  by
  $$(x_0, \dots, x_n) \sim (\lambda^{w_0}x_0, \dots, \lambda^{w_n}x_n). $$ 
  $\IP(w_0, \dots, w_n)$ can be seen as $\Proj(S)$ where $S=\IC[x_0, \dots, x_n]$ with $\deg(x_i)=w_i$. 
  We  can write  $S= \bigoplus_{d \in \IN} S_d$, where $S_d \subset S$ is
   the additive subgroup consisting of homogeneous polynomials of degree $d$. For every $k  \in \IZ$ we have an induced  coherent sheaf $\ko(k)$ defined as 
  $ \tilde{S(k)}$, where $S(k)=\bigoplus_{d\in\IN}S(k)_d$ is the shifted $S$-module with $S(k)_d:=S_{k+d}$, where $S_i=0$ if $i<0$ by convention. 
  Weighted projective spaces have a natural structure of orbifolds and have singular points where the $\IC^\ast$-action has non-trivial stabilizer. The sheaf 
  $\ko(d)$ is invertible iff $d$ divides the least common multiple of $(w_0,\dots, w_n)$. The canonical sheaf of $\IP(w_0,\dots,w_n)$ is $\omega_{\IP}=\ko(-\sum_i w_i)$.

Suppose that $R=\bigoplus_{d\ge0}R_d$ is a finitely generated graded ring. 
Then $X=\Proj(R)$ naturally embeds in a weighted projective space. More precisely, if $R$ is generated by $x_0,\dots, x_n$ with $x_i$ in $R_{w_i}$, 
then the surjection $S=\IC[x_0,\dots,x_n]\to R$ induces an embedding of $X$ in $\IP(w_0, \dots, w_n)$ with structure sheaf $\ko_X=\widetilde{R}$. 
The equations defining the image of $X$ are given by the weighted homogeneous relations in $R$  among the generators (the kernel of the above surjection). 
One can compute the free resolution of $\ko_X$ as an $\ko_{\IP}$-module (equivalently, of $R$ as an $S$-module) exactly as in the classical case to get
\[0\to\kf_c\to\dots\to\kf_0\to \ko_X\to0\]
where $\kf_i$ are free $\ko_{\IP}$-modules, and $\kf_0=\ko_{\IP}$. If $c=n-\dim X$ then $\ko_X$ is Cohen--Macaulay by the Auslander--Buchsbaum theorem
 and the canonical sheaf of $X$ is $\omega_X=\ke xt^c(\ko_X,\omega_{\IP})=\kh om(\kf_c,\omega_{\IP})$.

\subsection{ Rank one torsion-free sheaves on Gorenstein curves }
A torsion-free sheaf on a curve $C$ is a coherent sheaf $\kf$ such that $\depth (\kf_x )=1$  for every closed point $x\in C$;  
 equivalently  $\kf$ has no  subsheaves supported on a finite set of points, i.e.,  $\kf$ satisfies   Serre condition $S_1$.

 \hfill\break
 {\bf Convention.} By a rank one torsion free sheaf on a curve $C$ we mean a torsion free sheaf  that has rank one at the general point of any component of $C$.

 The  degree of  a   rank one torsion-free sheaf $\kf$ on a curve $C$   is defined by the formula  $\deg(\kf) = \chi (\kf) - \chi (\OO)$, so that the Riemann--Roch formula is true by definition.
 \begin{rem}\label{rem: degree stuff on C}
  Note that by Serre duality on a Cohen--Macaulay curve $C$  we still have $\chi(\omega_C) = -\chi(\ko_C)$, so $\deg \omega_C = 2p_a(C) -2$.
If $\kf \into \kf'$ is an injection with finite cokernel,  then $\deg\kf \leq \deg \kf'$ by the additivity of the Euler characteristic.

\end{rem}
 In general, every  reflexive sheaf on a curve is torsion-free and by a result of Hartshorne  these notions coincide for rank one sheaves on a Gorenstein curve.
  \begin{lem}[\cite{Har--gen}, Prop.\ 1.6] \label{lem:reflexive} 
 Let $C$ be a Gorenstein curve and $\kf$ a rank one coherent sheaf on $C$. Then $\kf$  is reflexive if and only if it is 
 torsion-free. 
 
 In particular, for any  such sheaf $\kf$ we have
 \[ \kf^{[m]} = \left(\kf^{\tensor m}\right)^{\vee\vee} = \left(\kf^{\tensor m}\right)/\langle\text{0-dimensional torsion}\rangle.\]
\end{lem}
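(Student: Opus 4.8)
The plan is to argue locally and translate the statement into commutative algebra over the local rings $A=\OO_{C,p}$, which are one-dimensional Gorenstein local rings. In this language ``rank-one torsion-free'' becomes ``maximal Cohen--Macaulay (MCM) of rank one'', i.e.\ $\depth_A M=1=\dim A$ with $M$ free at the generic points, and ``reflexive'' becomes $M\isom M^{\vee\vee}$ where $M^\vee=\Hom_A(M,A)$. I would first dispose of the easy implication, that reflexive forces torsion-free: the dual $M^\vee$ of any finitely generated module is automatically torsion-free, because if $a\in A$ is a non-zerodivisor (one exists since $\depth A=1$) and $a\varphi=0$ for some $\varphi\in M^\vee$, then $a\,\varphi(m)=0$ in $A$ for all $m$ forces $\varphi(m)=0$, hence $\varphi=0$. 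Thus $M^{\vee\vee}=(M^\vee)^\vee$ is always torsion-free, and a reflexive $M$ inherits this. This direction uses nothing beyond $\depth A\ge 1$.

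The converse is where the Gorenstein hypothesis is essential, and I expect it to be the main obstacle to make self-contained. The key input is the vanishing $\Ext^i_A(M,A)=0$ for $i>0$ whenever $M$ is MCM. In dimension one I would obtain this by hand: since $M$ is free at the (finitely many) generic points of $C$, the module $\Ext^1_A(M,A)$ is supported at the closed point and so has finite length; applying $\Hom_A(M,-)$ to $0\to A\xrightarrow{\,x\,}A\to A/x\to 0$ for a non-zerodivisor $x$, and using that a Gorenstein ring of dimension one has injective dimension one (so $\Ext^2_A(M,A)=0$), shows that multiplication by $x$ is surjective on $\Ext^1_A(M,A)$, whence $\Ext^1_A(M,A)=0$ by Nakayama. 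This is precisely the self-duality of MCM modules over Gorenstein rings recorded in the Cohen--Macaulay representation theory of \cite{Yoshino}, which one could also simply cite.

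With $\Ext^1_A(M,A)=0$ in hand (and likewise for $M^\vee$, which is torsion-free, hence again MCM), I would descend to the Artinian Gorenstein quotient $\bar A=A/x$. Base change then gives $M^\vee\tensor\bar A\isom \Hom_{\bar A}(M\tensor\bar A,\bar A)$ and, applying this twice, $M^{\vee\vee}\tensor\bar A\isom (M\tensor\bar A)^{\vee\vee}$ over $\bar A$, compatibly with the biduality map $\eta_M\colon M\to M^{\vee\vee}$ reduced modulo $x$. Since $\bar A$ is Artinian Gorenstein, hence self-injective, the functor $\Hom_{\bar A}(-,\bar A)$ is a perfect (Matlis) duality, so $M\tensor\bar A$ is reflexive and $\eta_M\tensor\bar A$ is an isomorphism. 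As the cokernel of $\eta_M$ is of finite length and vanishes modulo $x$, Nakayama's lemma upgrades this to $M\isom M^{\vee\vee}$, establishing reflexivity.

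Finally, the displayed ``in particular'' follows formally. The first equality is just the definition of the reflexive power $\kf\refl{m}=(\kf^{\tensor m})^{\vee\vee}$. For the second, write $T$ for the $0$-dimensional torsion subsheaf of $\kg:=\kf^{\tensor m}$ and set $\bar\kg=\kg/T$, which is rank-one torsion-free. Every homomorphism $\kg\to\OO_C$ kills the torsion $T$, so $\kg^\vee\isom\bar\kg^\vee$ and hence $\kg^{\vee\vee}\isom\bar\kg^{\vee\vee}$; the main equivalence just proved then gives $\bar\kg^{\vee\vee}\isom\bar\kg=\kg/T$, which is the assertion.
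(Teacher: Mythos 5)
The paper offers no proof of this lemma at all: it is imported wholesale from Hartshorne \cite{Har--gen}, Prop.~1.6, whose argument is carried out in the generality of schemes satisfying his condition $(\dagger)$ via the formalism of generalized divisors. Your proposal is therefore a genuinely different route: a self-contained, dimension-one-specific argument via maximal Cohen--Macaulay module theory over the local rings $\OO_{C,p}$. The overall architecture is sound and is essentially how one would reprove Hartshorne's statement by hand for curves: the easy direction (duals are torsion-free), the key vanishing $\Ext^1_A(M,A)=0$ for MCM $M$ over a one-dimensional Gorenstein local ring, descent to the Artinian Gorenstein quotient $A/x$ where Matlis duality gives reflexivity, and the formal deduction of the displayed ``in particular''. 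What your approach buys is independence from Hartshorne's machinery at the cost of some commutative algebra that the paper is content to outsource.

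Two steps need repair as written. First, in the Nakayama argument: applying $\Hom_A(M,-)$ to $0\to A\xrightarrow{x}A\to A/x\to 0$ yields exactness of $\Ext^1_A(M,A)\xrightarrow{x}\Ext^1_A(M,A)\to\Ext^1_A(M,A/x)\to\Ext^2_A(M,A)=0$, so the vanishing of $\Ext^2$ only identifies the cokernel of multiplication by $x$ with $\Ext^1_A(M,A/x)$; it does not make it zero. You must add that $x$ can be chosen $M$-regular (possible since $M$ is torsion-free), whence $\Ext^1_A(M,A/x)\cong\Ext^1_{A/x}(M/xM,A/x)=0$ by Rees's change-of-rings isomorphism and the self-injectivity of the Artinian Gorenstein ring $A/x$ --- or simply invoke the MCM vanishing from \cite{Yoshino}, or local duality ($\Ext^1_A(M,A)$ is dual to $H^0_{\mathfrak m}(M)=0$), as you suggest. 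Second, your final Nakayama step only gives surjectivity of $\eta_M\colon M\to M^{\vee\vee}$; injectivity does not follow from the mod-$x$ isomorphism alone, but it is immediate because $\eta_M$ is generically an isomorphism of free rank-one modules, so $\ker\eta_M$ is a finite-length submodule of the torsion-free module $M$ and hence vanishes. With these two additions the proof is complete; the treatment of the ``in particular'' statement is correct as it stands.
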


 Moreover, in this case,  if $\kf$ is a rank one  torsion-free sheaf and $\kh$ is an invertible sheaf then $\deg (\kf\otimes\kh)=\deg(\kf)+\deg(\kh)$ (cf. \cite[Prop. 2.8]{Har--gen}).
 We point out a useful Lemma, which relates the behaviour of  a rank one torsion free  sheaf and its pull-back under a partial normalisation.

\begin{lem}\label{lem: pushforward} 
 Let $ \kf$  be a rank one torsion free sheaf   on a reduced Gorenstein curve  and assume that all the partial normalisations of $C$ are Gorenstein. 
 Then:
 \begin{enumerate}
 \item there are a (uniquely determined)  partial normalisation $f\colon\tilde C \to C$ and a line bundle   $\tilde \kf$  on $\tilde C$ such that 
 $\kf = f_* \tilde \kf$;
 \item $\Aut(\kf)=\Aut(\tilde \kf)$. In particular, if $\tilde C$ is connected then $\Aut(\kf)=\IC^*$.
 \end{enumerate}
 \end{lem}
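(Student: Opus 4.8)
The plan is to recover the partial normalisation intrinsically from $\kf$ as the relative spectrum of its endomorphism algebra, and then to show that $\kf$ becomes the pushforward of a line bundle there.

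First I would set $\ka:=\shom_{\OO_C}(\kf,\kf)$. Since $\kf$ is torsion-free of rank one, restricting an endomorphism to the generic points embeds $\ka$ into the sheaf of total quotient rings $\kk_C$; thus $\ka$ is a \emph{commutative} $\OO_C$-subalgebra of $\kk_C$ that is finite over $\OO_C$ and coincides with $\OO_C$ at every generic point. Consequently $\OO_C\subseteq\ka\subseteq\ol{\OO}_C$, so $f\colon\tilde C:=\spec_C\ka\to C$ is a partial normalisation, which is Gorenstein by hypothesis. As $\kf$ is a module over its own (commutative) endomorphism algebra $\ka=f_*\OO_{\tilde C}$ and $f$ is affine, descent along $f_*$ produces a unique coherent rank one torsion-free sheaf $\tilde\kf$ on $\tilde C$ with $\kf=f_*\tilde\kf$.

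The heart of the argument is to show that $\tilde\kf$ is invertible. Commutativity of $\ka$ forces every $\OO_C$-linear endomorphism of $\kf$ to be $\ka$-linear, whence $\shom_{\OO_{\tilde C}}(\tilde\kf,\tilde\kf)=\OO_{\tilde C}$. Invertibility may then be checked on stalks, so the statement reduces to the following local claim: if $R$ is a one-dimensional reduced Gorenstein local ring and $M$ is a rank one torsion-free $R$-module with $\End_R(M)=R$, then $M$ is free. To prove this I would realise $M$ as a fractional ideal, set $M^{-1}:=\Hom_R(M,R)$ and consider the trace ideal $\tau:=M\cdot M^{-1}\subseteq R$, which contains a nonzerodivisor and is therefore itself rank one torsion-free. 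Using that rank one torsion-free sheaves on a Gorenstein curve are reflexive (Lemma \ref{lem:reflexive}), one identifies $\End_R(M)$ with $\Hom_R(\tau,R)$; hence $\End_R(M)=R$ forces $\Hom_R(\tau,R)=R$, and reflexivity of $\tau$ gives $\tau=\Hom_R(\Hom_R(\tau,R),R)=\Hom_R(R,R)=R$, i.e.\ $M\cdot M^{-1}=R$ and $M$ is invertible. This local freeness criterion, resting on reflexivity, is the main obstacle; the hypothesis that all partial normalisations be Gorenstein enters precisely to make $\tilde C$ Gorenstein so that the criterion applies.

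For the uniqueness in (1), suppose $\kf=g_*\kg$ for some partial normalisation $g\colon C'\to C$ and line bundle $\kg$. The $\OO_{C'}$-module structure embeds $g_*\OO_{C'}$ into $\End_{\OO_C}(\kf)=\ka$; conversely any element of $\ka\subseteq\kk_C$ preserves $\kf$ and, acting on the locally free sheaf $\kg$, must be an $\OO_{C'}$-scalar, so $\ka\subseteq g_*\OO_{C'}$. Thus $g_*\OO_{C'}=\ka$, whence $C'=\tilde C$ and $\tilde\kf\cong\kg$ by faithfulness of $f_*$. Finally, for (2), the automorphisms of $\kf$ are the units of $\Gamma(C,\ka)=\Gamma(\tilde C,\OO_{\tilde C})$, which are exactly the automorphisms of the line bundle $\tilde\kf$; when $\tilde C$ is connected, reduced and projective over $\IC$, one has $\Gamma(\tilde C,\OO_{\tilde C})=\IC$ and this group is $\IC^*$.
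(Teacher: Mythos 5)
Your proof is correct, but it follows a genuinely different route from the paper's. The paper's proof reduces to the local situation at the singular points and then simply cites \cite[Prop.~7.2]{Bass63} for the existence of the pair $(\tilde C,\tilde\kf)$; you in effect reprove that result from scratch, constructing the partial normalisation intrinsically as $\tilde C=\spec_C\shom_{\OO_C}(\kf,\kf)$ and proving invertibility of $\tilde\kf$ via the trace‐ideal computation $M\cdot M^{-1}=R$, which rests on exactly the reflexivity statement the paper records as Lemma~\ref{lem:reflexive} (I checked the identification $\End_R(M)=\End_R(M^{-1})=\Hom_R(M M^{-1},R)$ and the step $\tau^{\vee\vee}=\tau$; both are sound, and the Gorenstein hypothesis on all partial normalisations enters at the same point in both arguments, namely to make $\tilde C$ Gorenstein). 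The uniqueness mechanism also differs: the paper recovers the line bundle a posteriori as $\tilde\kf\cong(f^*\kf)\refl{1}$, whereas you characterise $f_*\OO_{\tilde C}$ itself as the endomorphism algebra of $\kf$, which makes the uniqueness of the partial normalisation, and not only of the sheaf on it, completely transparent. What the paper's version buys is brevity; what yours buys is self‐containedness and a cleaner uniqueness argument. Part (\textit{ii}) is the same in substance in both treatments: $\Aut(\kf)$ and $\Aut(\tilde\kf)$ are both the unit group of $H^0(\tilde C,\OO_{\tilde C})$, which is $\IC^*$ when $\tilde C$ is connected.
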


\begin{proof}
$(i)$ Since at a smooth point of a curve every torsion-free sheaf is free, the question is local at the singular points of $C$. 
Then the existence of the partial normalisation and the line bundle $\tilde \kf$ follows from  \cite[Prop. 7.2]{Bass63}. 
For the unicity, note that the natural map $f^*\kf =f^*f_*\tilde\kf\to \tilde \kf$ is surjective and therefore it  induces an isomorphism $(f^*\kf)^{[1]}\cong \tilde \kf$,  since both sheaves have rank one and $\tilde\kf$ is locally free. Hence $\tilde\kf$ can be recovered from $\kf$.

$(ii)$ Automorphisms  of $\kf$, $\tilde \kf$ are in bijection.  In fact an automorphism  of $\tilde\kf$ obviously induces an automorphism  of $\kf:=f_*\tilde\kf$. Conversely an automorphism of $\kf$ induces an automorphism of $(f^*\kf)^{[1]}\cong \tilde \kf$.
\end{proof}

The result that follows is certainly well known to experts, but we give a proof for lack of a suitable reference.
\begin{lem}\label{lem:  sequence-for-omega}
Let $C$ be a reduced Gorenstein curve and let $A,B\subset C$ be subcurves without common components such that $C=A\cup B$. Then   there are a natural  isomorphism  $\ki_{A} \omega_C \cong \omega_{B}$ and  an exact sequence:
$$0\to \omega_B\to\omega_C\to\omega_C\restr{A}\to 0.$$

\end{lem}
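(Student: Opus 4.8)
The plan is to identify $\omega_B$ with a subsheaf of $\omega_C$ by duality, to recognise that subsheaf as $\ki_A\omega_C$ through a local computation on the reduced local rings of $C$, and then to obtain the exact sequence simply by tensoring the structure sequence of $A$ with the invertible sheaf $\omega_C$. The naturality will be built in, since every identification used is canonical.

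The key step is to establish $\omega_B\cong\shom_{\OO_C}(\OO_B,\omega_C)$. Since $C$ is Gorenstein of pure dimension one, $\omega_C$ is a dualising sheaf and is a line bundle, while $B$ is a reduced subcurve, hence Cohen--Macaulay of the same dimension one. Working locally at a point $x\in C$, the ring $\OO_{C,x}$ is Gorenstein of dimension one and $\OO_{B,x}$ is a Cohen--Macaulay $\OO_{C,x}$-module of dimension one, so it has codimension zero; therefore $\shext^i_{\OO_C}(\OO_B,\OO_C)=0$ for $i>0$, and tensoring with the line bundle $\omega_C$ gives $\shext^i_{\OO_C}(\OO_B,\omega_C)=0$ for $i>0$ as well. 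By Grothendieck duality for the finite closed immersion $B\into C$ (which identifies the dualising complex of $B$ with $R\shom_{\OO_C}(\OO_B,\omega_C)$ up to the shift coming from the common dimension), this vanishing forces the complex to be concentrated in degree zero, so $\omega_B\cong\shom_{\OO_C}(\OO_B,\omega_C)$.

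Next I would compute this $\shom$ explicitly. Applying $\shom_{\OO_C}(-,\omega_C)$ to $0\to\ki_B\to\OO_C\to\OO_B\to0$ exhibits $\shom_{\OO_C}(\OO_B,\omega_C)$ as the subsheaf of $\omega_C$ annihilated by $\ki_B$, namely $(0:_{\omega_C}\ki_B)$. Because $\omega_C$ is invertible, this equals $(0:_{\OO_C}\ki_B)\cdot\omega_C$, so it remains to verify the purely local statement $(0:_{\OO_C}\ki_B)=\ki_A$. This is where reducedness is used: at $x$ the minimal primes of $\OO_{C,x}$ correspond to the local branches, and since $A$ and $B$ share no component they partition these primes, $\ki_A$ and $\ki_B$ being the intersections of the primes of the respective parts. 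The inclusion $\ki_A\subseteq(0:\ki_B)$ is immediate from $\ki_A\ki_B\subseteq\ki_A\cap\ki_B=0$; conversely, an element $f$ with $f\ki_B=0$ must lie in each minimal prime $\gothp_i$ with $i$ belonging to $A$, since $\ki_B\not\subseteq\gothp_i$ (minimality of the primes and the disjointness of the partition) lets one pick $g\in\ki_B\setminus\gothp_i$ with $fg=0$, whence $f\in\gothp_i$; thus $f\in\ki_A$. This yields the natural isomorphism $\ki_A\omega_C\cong\omega_B$.

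Finally, tensoring $0\to\ki_A\to\OO_C\to\OO_A\to0$ with the invertible sheaf $\omega_C$ gives $0\to\ki_A\omega_C\to\omega_C\to\omega_C\restr{A}\to0$, and substituting $\ki_A\omega_C\cong\omega_B$ produces the asserted sequence. The main obstacle is the first step: one must ensure that $\omega_B$, a priori defined by duality on $B$ alone, genuinely coincides with $\shom_{\OO_C}(\OO_B,\omega_C)$, and the crux is checking the Cohen--Macaulay, codimension-zero hypothesis that makes the higher $\shext$ sheaves vanish so the dualising complex of $B$ sits in a single degree. Everything afterwards is elementary commutative algebra over the reduced local rings of $C$, together with the fact that $\omega_C$ is a line bundle.
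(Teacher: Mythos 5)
Your proof is correct and follows essentially the same route as the paper: both identify $\omega_B$ with $\shom_{\OO_C}(\ko_B,\omega_C)\subset\omega_C$ via duality for the inclusion $B\into C$ and then match this subsheaf with $\ki_A\omega_C$ using that $\ki_A$ is the annihilator of $\ki_B$, before tensoring the structure sequence of $A$ by the invertible sheaf $\omega_C$. You merely spell out two points the paper leaves implicit, namely the $\shext$-vanishing behind the duality statement and the minimal-prime computation of $(0:_{\OO_C}\ki_B)$.
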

\begin{proof}Let $\ki_A,\ki_B\subset \ko_C$ be the ideals of $A$, $B$, respectively. Both ideals are radical and they satisfy $\ki_A\ki_B=0$. More precisely, $\ki_A$ is the annihilator of $\ki_B$, and viceversa. 
By relative duality for finite morphisms \cite[\href{https://stacks.math.columbia.edu/tag/0AU3}{Section 0AU3}]{stacks-project}  we have an inclusion $\omega_B=\shom_{\OO_C}(\ko_B,\omega_C)\into \omega_C$,
 so it is enough to give an isomorphism $\ki_A\omega_C\to \shom_{\OO_C}(\ko_B,\omega_C)$. 
 In fact, there is a natural map $F\colon \ki_A\omega_C\to \shom_{\OO_C}(\ko_B,\omega_C)$, that maps $\sigma\in \ki_A\omega_C$  to the homomorphism that sends $1\in \OO_B$ to $\sigma$  and, 
  using the fact that $\omega_C$ is invertible and $\ki_A$ is the annihilator of $\ki_B$, it is immediate to check  that $F$ is an isomorphism. 
\end{proof}

For later reference, we recall two results related to sections and cohomology of torsion-free sheaves.
\begin{lem}\label{vanishingH^1}
 Let $C$ be a  reduced Gorenstein curve and $\kf$ a rank one  torsion-free sheaf on $C$. If $\deg\left(\kf\restr{B}\refl{1}\right) \geq 2p_a(B)-1$  for every  subcurve $B\subseteq C$, then $H^1(C,\kf)=0$. 
\end{lem}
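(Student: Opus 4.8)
The plan is to argue by contradiction via Serre duality, reducing the statement to the vanishing of $\Hom_{\ko_C}(\kf,\omega_C)$ and then exploiting the description of $\omega_B$ coming from Lemma~\ref{lem: sequence-for-omega}. Since $C$ is Gorenstein and projective, Serre duality gives $H^1(C,\kf)^\vee\cong\Hom_{\ko_C}(\kf,\omega_C)$, so it is enough to show that every homomorphism $\phi\colon\kf\to\omega_C$ is zero. Suppose some $\phi\neq0$, and let $B\subseteq C$ be the union of the irreducible components on which $\phi$ is generically non-zero, with $A$ the complementary subcurve; thus $C=A\cup B$ and $A$, $B$ share no component.

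First I would push $\phi$ down onto $B$. The composite of $\phi$ with the projection $\omega_C\to\omega_C\restr{A}$ is generically zero on every component of $A$, and since $\omega_C\restr{A}$ is an invertible, hence torsion-free, sheaf on the reduced curve $A$, this composite vanishes. Therefore $\phi$ factors through $\ki_A\omega_C=\omega_B$ (Lemma~\ref{lem: sequence-for-omega}); and as $\omega_B$ is an $\ko_B$-module, $\phi$ factors further through a non-zero map $\bar\phi\colon\kf\restr{B}\to\omega_B$. By the very choice of $B$, the map $\bar\phi$ is generically an isomorphism on each component of $B$, so its kernel is torsion and it induces an injection $\kf\restr{B}/(\text{torsion})\into\omega_B$ with finite cokernel.

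To match the hypothesis I must upgrade the source from this torsion-free quotient to the reflexive hull $\kf\restr{B}\refl{1}$, and for this the decisive fact is that $\omega_B$ is reflexive. Indeed, by relative duality $\omega_B=\shom_{\ko_C}(\ko_B,\omega_C)$ is the $\omega_C$-dual of the maximal Cohen--Macaulay $\ko_C$-module $\ko_B$, and maximal Cohen--Macaulay modules over the Gorenstein local rings $\ko_{C,p}$ are reflexive. Dualising the inclusion $\kf\restr{B}/(\text{torsion})\into\omega_B$ twice, and noting that torsion dualises to zero so that $(\kf\restr{B}/(\text{torsion}))^{\vee\vee}=(\kf\restr{B})^{\vee\vee}=\kf\restr{B}\refl{1}$ while $\omega_B^{\vee\vee}=\omega_B$, I obtain an injection $\kf\restr{B}\refl{1}\into\omega_B$ with finite cokernel. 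By Remark~\ref{rem: degree stuff on C} this forces $\deg(\kf\restr{B}\refl{1})\le\deg\omega_B=2p_a(B)-2$, contradicting the hypothesis $\deg(\kf\restr{B}\refl{1})\ge2p_a(B)-1$. Hence no such $\phi$ exists and $H^1(C,\kf)=0$.

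The hard part will be precisely this reflexivity bookkeeping. A non-zero section $\phi$ only records the torsion-free quotient of $\kf\restr{B}$, whereas the numerical input of the lemma is phrased for the reflexive hull $\kf\restr{B}\refl{1}$; over a non-Gorenstein subcurve $B$ these two can differ in degree, so the reflexivity of $\omega_B$ is genuinely needed in order to transport the degree inequality from the quotient to $\kf\restr{B}\refl{1}$. Once that point is secured, the remaining steps are a routine comparison of Euler characteristics, with no input beyond Remark~\ref{rem: degree stuff on C} and Lemma~\ref{lem: sequence-for-omega}.
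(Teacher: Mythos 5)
Your proof follows the same route as the paper's, which outsources the middle step to \cite[Lemma 2.4]{CFHR}: Serre duality, factorisation of a non-zero $\phi\colon\kf\to\omega_C$ through a generically surjective map $\kf\restr{B}\to\omega_B$ with $B$ the union of the components on which $\phi$ does not vanish generically, and a comparison of Euler characteristics via Remark~\ref{rem: degree stuff on C}. The construction of $B$, the vanishing of the composite $\kf\to\omega_C\restr{A}$, and the factorisation through $\omega_B=\ki_A\omega_C$ are all correct and faithfully reconstruct the cited lemma.

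The step that does not hold up as written is precisely the one you single out as decisive, namely that $\omega_B$ is reflexive. Your duality argument shows that $\omega_B=\shom_{\ko_C}(\ko_B,\omega_C)$ is reflexive \emph{as an $\ko_C$-module}: it is torsion-free, hence maximal Cohen--Macaulay over the one-dimensional Gorenstein local rings $\ko_{C,p}$. But the reflexive hull $\kf\restr{B}\refl{1}$ appearing in the hypothesis is the double dual over $\ko_B$, and the dualising module of a non-Gorenstein curve singularity is in general \emph{not} reflexive over itself: for $R=\IC\llbracket t^3,t^4,t^5\rrbracket$ one computes $\omega_R\cong(1,t)R$ while $\omega_R^{\vee\vee}=\IC\llbracket t\rrbracket\supsetneq\omega_R$, and such an $R$ can occur as the local ring of a subcurve of a reduced Gorenstein (even complete intersection) curve by liaison. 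If instead you take all duals over $\ko_C$, then $(\kf\restr{B})^{\vee\vee}$ is simply the torsion-free quotient $\kf\restr{B}/(\text{torsion})$, so the double dualisation returns the injection you already had and does not upgrade it to the $\ko_B$-reflexive hull. Your argument is therefore complete only when $\kf\restr{B}\refl{1}$ agrees with the torsion-free quotient, that is, when $B$ is Gorenstein (Lemma~\ref{lem:reflexive}); this covers every subcurve arising in the paper's applications, and in fairness the paper's own proof asserts the injection $\kf\restr{B}\refl{1}\into\omega_B$ with no more justification than you supply. The clean repair is to prove the statement with $\kf\restr{B}/(\text{torsion})$ in place of $\kf\restr{B}\refl{1}$, for which everything else in your write-up goes through verbatim.
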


\begin{proof} 
The proof follows by the arguments used in \cite[Thm.1.1]{CFHR}. 
By Serre duality $H^1(C,\kf)^{\vee} \cong \Hom (\kf, \omega_C)$. Assume it is not zero and 
pick any nonzero map
$\varphi \colon \kf \to \omega_C$. By \cite[Lemma 2.4]{CFHR}, $\varphi $ comes from a generically surjective 
map 
$\kf\restr{B}\to\omega_B$ for a subcurve $B\subseteq C$  and   yields  an injection $\kf\restr{B}\refl{1} \into \omega_B$  whose 
cokernel has
finite length. Therefore   $\chi\left(\kf\restr{B}\refl{1}\right)\leq\chi(\omega_B)$, and Remark \ref{rem: degree stuff on C} gives
$\deg\left(\kf\restr{B}\refl{1}\right)   \leq 2p_a(B)-2$, against the assumptions.  
\end{proof}

To conclude this section we recall a slight generalisation of the {\em Base-Point Free Pencil Trick},  a way to study the behavior of multiplication of sections on a connected curve
(see \cite[\S 1 Thm.~2]{mumford70},   \cite[Prop.~1.5]{Franciosi}).

 \begin{prop}\label{prop: surjectivity}
Let $C$ be a connected  reduced Gorenstein curve,  let $\kh$ be  an invertible sheaf  
on $C$ such that $|\kh|$ is a base point free system, and let $\kf$ be a rank one torsion-free   sheaf on $C$. 
 Then   
the multiplication map
$$\rho_{\kf,\kh}:  H^0(C,\kh)\otimes H^0(C,\kf) \rightarrow H^0(C,\kh\otimes \kf) $$ is surjective if one of the following conditions  holds

 \begin{enumerate} 	 
	 \item 
$H^1(C,  \kf\otimes \kh^{-1})=0$; by Lemma \ref{vanishingH^1} this holds 
 if $\deg \left( (\kf\otimes \kh^{-1})_{|B}\refl{1}\right) \geq  2 p_a(B) - 1$
 for every subcurve $B \subseteq C$.

\item  $h^0(C,\kh)\geq 3 $ and $\kf\otimes \kh^{-1}\cong \omega_C$. 
 \end{enumerate}

Moreover,  if $h^0(C,\kh)=2$  then 
 $\ker\rho_{\kf,\kh} \cong H^0(C, \kf\otimes \kh^{-1}).$

\end{prop}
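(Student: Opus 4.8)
My plan is to deduce all three assertions from a single device, the base-point-free pencil trick packaged as a Koszul-type short exact sequence, and then to read everything off the associated long exact cohomology sequence. Assume first $h^0(\kh)\ge 2$ and choose $s_0,s_1$ spanning a subpencil $V\subseteq H^0(\kh)$ with no common zeros; a general pencil works because $|\kh|$ is base point free. At every point one of $s_0,s_1$ generates $\kh$, so $(s_0,s_1)\colon\OO_C^{2}\to\kh$ is surjective with kernel a line bundle isomorphic to $\kh^{-1}$, giving a \emph{locally split} exact sequence
\[0\to\kh^{-1}\to\OO_C^{2}\xrightarrow{(s_0,s_1)}\kh\to0.\]
Being locally split, it stays exact after tensoring with the sheaf $\kf$:
\[0\to\kf\otimes\kh^{-1}\to\kf^{2}\xrightarrow{(s_0,s_1)}\kf\otimes\kh\to0,\]
whose cohomology sequence reads
\[0\to H^0(\kf\otimes\kh^{-1})\to V\otimes H^0(\kf)\xrightarrow{\ \mu_V\ }H^0(\kf\otimes\kh)\xrightarrow{\ \partial_V\ }H^1(\kf\otimes\kh^{-1}),\]
where $\mu_V$ is the restriction of $\rho_{\kf,\kh}$ to $V\otimes H^0(\kf)$.

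\emph{Parts (i) and the last statement.} If $H^1(\kf\otimes\kh^{-1})=0$ then $\mu_V$ is already surjective, hence so is $\rho_{\kf,\kh}$, whose image contains $\im\mu_V$; the degree reformulation is exactly Lemma \ref{vanishingH^1}. (If $h^0(\kh)=1$, base point freeness forces $\kh\cong\OO_C$ and $\rho_{\kf,\kh}$ is an isomorphism, so surjectivity is trivial.) If $h^0(\kh)=2$ then $V=H^0(\kh)$, so $\mu_V=\rho_{\kf,\kh}$, and the left end of the sequence gives $\ker\rho_{\kf,\kh}\cong H^0(\kf\otimes\kh^{-1})$.

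\emph{Part (ii).} Now $\kf\otimes\kh^{-1}\cong\omega_C$, so by Serre duality $H^1(\kf\otimes\kh^{-1})\cong H^1(\omega_C)\cong H^0(\OO_C)^\vee\cong\IC$ since $C$ is connected and reduced; thus $\coker\mu_V$ is at most one-dimensional for every base-point-free pencil $V$. Using $h^0(\kh)\ge3$ I would pick general $s_0,s_1,s_2$ with $V=\langle s_0,s_1\rangle$ base point free, $\mathrm{div}(s_0)$ reduced and supported on smooth points, and $s_2\notin V$. Because $\im\rho_{\kf,\kh}\supseteq\im\mu_V+s_2\cdot H^0(\kf)$, it suffices to show that $s_2\cdot H^0(\kf)$ surjects onto $\coker\mu_V\cong\IC$, i.e.\ that $\partial_V(s_2\sigma)\ne0$ for some $\sigma\in H^0(\kf)$. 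Identifying $H^1(\omega_C)\cong\IC$ by the trace, the connecting map is a residue, $\partial_V(\tau)=\sum_{p\in\mathrm{div}(s_0)}\Res_p(\tau/(s_0s_1))$, so for $\sigma\in H^0(\omega_C\otimes\kh)=H^0(\kf)$ one gets $\partial_V(s_2\sigma)=\sum_p \tfrac{s_2(p)}{s_1(p)}\Res_p(\sigma/s_0)$. The residue theorem gives $\sum_p\Res_p(\sigma/s_0)=0$, and the cohomology sequence of $0\to\omega_C\xrightarrow{\cdot s_0}\omega_C\otimes\kh\to\bigoplus_{p\in\mathrm{div}(s_0)}\IC\to0$, whose final connecting map to $H^1(\omega_C)\cong\IC$ is precisely this residue sum, shows that it is the only linear relation among the functionals $\sigma\mapsto\Res_p(\sigma/s_0)$. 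Hence $\partial_V(s_2\,\cdot\,)\equiv0$ would force $s_2/s_1$ to be constant on $\mathrm{div}(s_0)$, i.e.\ $s_2-\lambda s_1$ to vanish on $\mathrm{div}(s_0)\in|\kh|$ for some $\lambda$, i.e.\ $s_2\in\langle s_0,s_1\rangle$ --- contrary to our choice. So $\rho_{\kf,\kh}$ is surjective.

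\emph{Main obstacle.} The whole difficulty sits in (ii): making rigorous the residue description of $\partial_V$, the residue theorem, and the determination of the relations among the residue functionals on a merely \emph{Gorenstein} (not smooth) curve, all of which ultimately rest on Serre duality for the dualising sheaf $\omega_C$ and must be handled with care at the singular points. By comparison the remaining ingredients --- exactness of the Koszul sequence after tensoring with the torsion-free $\kf$ (guaranteed by local splitness) and the existence of base-point-free general pencils --- are routine.
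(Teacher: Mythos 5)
Your proposal is correct, and for part \refenum{ii} it takes a genuinely different route from the paper. For \refenum{i} and the kernel statement the two arguments are close relatives: the paper picks a single section $s\in H^0(\kh)$ not vanishing on any component, forms $0\to\kf\otimes\kh^{-1}\to\kf\to\kg\to0$ with $\kg$ a skyscraper, and runs Mumford's diagram chase, quoting \cite{ACGH} for the kernel when $h^0(\kh)=2$; your Koszul packaging of a base-point-free pencil extracts both statements from one long exact sequence and is, if anything, tidier (the dimension count showing a general pencil is base point free, and the local splitness ensuring exactness after tensoring with the torsion-free $\kf$, are both fine). The genuine divergence is \refenum{ii}: the paper simply cites \cite[Prop.~1.5]{Franciosi}, using that $\kf\cong\omega_C\otimes\kh$ is invertible there, whereas you give a self-contained argument. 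Its skeleton is sound --- $\coker\mu_V$ injects into $H^1(\omega_C)\cong\IC$, so it suffices to find $\sigma$ with $\partial_V(s_2\sigma)\neq0$ --- but the step you yourself flag is exactly where the work sits: you need Bertini in characteristic zero to choose $s_0$ with reduced divisor supported at smooth points of a possibly reducible singular curve, and you need the identification of the Serre-duality trace on $H^1(\omega_C)$ with a sum of Rosenlicht residues in the \v{C}ech picture; this is classical for reduced curves and harmless here because all residues are taken at smooth points, but it deserves a reference rather than a wave. Two small observations that shore up your argument: the appeal to the residue theorem can be replaced by the tautology $\partial_V(s_1\sigma)=0$ (since $s_1\sigma\in\im\mu_V=\ker\partial_V$), which already shows that the sum functional kills the image of $H^0(\omega_C\otimes\kh)$ in $\bigoplus_p\IC$ and that this image has codimension exactly one; and the implicit assumption $H^0(\kf)\neq0$ is automatic, since otherwise the connecting map $\bigoplus_p\IC\to H^1(\omega_C)\cong\IC$ would be injective, forcing $\deg\kh\leq1$ and hence $h^0(\kh)\leq2$. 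What your route buys is independence from the external reference; what the paper's route buys is brevity and the avoidance of residue theory on singular curves altogether.
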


\begin{proof}   
(i)  follows by adapting to our case  the arguments used in   \cite[\S 1 Thm.~2]{mumford70} (which is stated for any coherent sheaf $\kf$  on a variety $X$). Indeed,   
 since $|\kh|$ is a base point free system, we can  take a section which yields an inclusion  $\kf \otimes \kh^{-1} \to \kf$; denoting by 
 $\kg$ its cokernel, by our assumptions  $\dim (\Supp (\kg))=0 $  and we have an exact sequence
 $$ 0 \to H^0(C,  \kf\otimes \kh^{-1})\otimes H^0(C,\kh) \to H^0(C,  \kf) \otimes H^0(C,\kh) \to  H^0(C,  \kg)\otimes H^0(C,\kh) \to 0 ;$$
 by a direct computation we may assume  $H^0(C,\kh)\otimes H^0(C,\kg) \onto H^0(C,\kh\otimes \kg) $, and we may conclude 
 as in   \cite[\S 1 Thm.~2, p. 45]{mumford70} by diagram chasing. 
 
$(ii)$  follows from  \cite[Prop.~1.5]{Franciosi}, since in this case  $\kf$ is invertible. 
 
 The last statement is the classical Castelnuovo's Base-Point Free Pencil Trick as shown in \cite[p. 126]{ACGH}.
\end{proof}

 \section{Reduced Gorenstein curves of genus  two with ample canonical bundle }\label{section: description curves}
 The contribution of a singular point of a curve to the global genus, compared to the normalisation, is measured by the so-called $\delta$-invariant of the point. Singularities with small value of this invariant have been classified by Stevens \cite{Stevens96}  and later independently in \cite[Appendix A]{Smyth11} and \cite[Sect. 2]{Battistella22}. However, without further conditions, there are infinitely many Gorenstein curves of any fixed genus. 
 \begin{exam}
  If $C$ is any nodal curve such that every component is isomorphic to $\IP^1$ and the dual graph is a tree, then $C$ is Gorenstein of  arithmetic genus zero. 
  
    Clearly, every irreducible curve of genus zero is isomorphic to $ \IP^1$. 
   \end{exam}
\begin{exam}\label{exam: genus 1 Gorenstein curves}
 There are three different types of irreducible Gorenstein  curves $C$ of arithmetic genus $1$, 
  namely a smooth elliptic curve, or a rational curve with a node $A_1$ or a cusp $A_2$. 
 
 This can either be deduced from Stevens' classification or by showing that a line bundle of degree three embeds $C$ as a plane cubic reasoning as in Proposition \ref{prop: types} below.
\end{exam}

 There are several ways to understand and describe the curves we are interested in. For sake of brevity, we take a quick route via the canonical ring, where we double the degrees  in consistency with later considerations.
\begin{prop}\label{prop: types}
 Let $C$ be a reduced connected  Gorenstein curve of arithmetic genus two with ample canonical bundle. Then the canonical ring is isomorphic to one of the following:
 \begin{description}
  \item[Type A] $R(C, \omega_C) = \IC[y_1, y_2, u]/( u^2 -f_{12}(y_1, y_2)),$
 where $\deg(y_1, y_2, u) = (2,2,6)$ and $f_{12}$ is a non-zero polynomial of weighted degree $12$. 
  \item[Type B] $R(C, \omega_C) \isom \IC[y_1, y_2,v,  u]/ ( y_1y_2, u^2 -f_{12}(y_1, y_2, v)),$
 where $\deg(y_1, y_2, v, u) = (2,2,4, 6)$ and $f_{12}$ is a  polynomial of degree $12$ containing the monomial $v^3$ with non-zero coefficient. 
 \end{description}
 \end{prop}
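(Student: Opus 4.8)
The plan is to compute the canonical ring $R=R(C,\omega_C)=\bigoplus_{n\ge0}H^0(C,nK_C)$ directly, by first fixing its Hilbert function and then reading off generators and relations degree by degree. Since $p_a(C)=2$, Remark~\ref{rem: degree stuff on C} gives $\chi(\OO_C)=-1$ and $\deg\omega_C=2$, while Serre duality gives $h^0(\omega_C)=h^1(\OO_C)=2$ and $h^1(\omega_C)=h^0(\OO_C)=1$. For $n\ge2$ the sheaf $\omega_C^{\otimes(1-n)}$ has negative degree on every (integral) component, so it has no sections and hence $H^1(nK_C)=0$ by Serre duality; Riemann--Roch then yields $h^0(nK_C)=2n-1$ for $n\ge2$. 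With the degrees doubled as in the statement, $R$ has Hilbert function $1,0,2,0,3,0,5,0,7,\dots$ in degrees $0,1,2,\dots$, and in particular the degree-$2$ part furnishes a basis $y_1,y_2$ of $H^0(\omega_C)$.

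The whole dichotomy is produced by the multiplication map $\mu_2\colon\Sym^2H^0(\omega_C)\to H^0(\omega_C^{\otimes2})$ between two three-dimensional spaces, that is, by the image of the bicanonical map. I would first rule out $\dim\ker\mu_2\ge2$: the rank-one binary quadrics form the Veronese conic in $\IP(\Sym^2H^0(\omega_C))\cong\IP^2$, so any pencil of binary quadrics meets it and contains a perfect square $\ell^2$; but then $\ell(y_1,y_2)\in H^0(\omega_C)$ is a nonzero section of a line bundle on a reduced curve with $\ell(y_1,y_2)^2=0$, forcing $\ell(y_1,y_2)=0$, a contradiction. The same computation shows that a one-dimensional $\ker\mu_2$ is generated by a rank-two quadric, hence equals $\langle y_1y_2\rangle$ after a linear change of the $y_i$. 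Thus either $\mu_2$ is an isomorphism, the bicanonical image is a smooth conic, and there is no new generator in degree $4$ (Type~A); or $\ker\mu_2=\langle y_1y_2\rangle$, the bicanonical image is a pair of lines, and one adjoins a generator $v$ in degree $4$ together with the relation $y_1y_2=0$ (Type~B).

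In the Type~A case $H^0(\omega_C^{\otimes2})=\Sym^2H^0(\omega_C)$; since the bicanonical system $|\omega_C^{\otimes2}|$ is base point free (Lemma~\ref{vanishingH^1} together with ampleness, which forces every rational subcurve to meet the rest in at least three points), so is $|\omega_C|$, as a common zero of $y_1,y_2$ would be a base point of $|\omega_C^{\otimes2}|$. Thus Proposition~\ref{prop: surjectivity} applies: its last assertion gives $\ker\rho_{\omega_C^{\otimes2},\omega_C}\cong H^0(\omega_C)$, so the cubics $\Sym^3H^0(\omega_C)$ span a four-dimensional subspace of the five-dimensional $H^0(3K_C)$ and exactly one new generator $u$ in degree $6$ is needed; condition~(i) of the same Proposition, via $H^1((n-1)K_C)=0$ for $n\ge3$, shows $H^0(\omega_C)\otimes H^0(nK_C)\to H^0((n+1)K_C)$ is surjective for $n\ge3$, so there are no further generators. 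Comparing $\dim P_d$ with $\dim R_d$ for $P=\IC[y_1,y_2,u]$ shows the first relation sits in degree $12$ and is unique; as $u^2=0$ would make $R$ non-reduced, the coefficient of $u^2$ is nonzero and completing the square in $u$ puts the relation in the form $u^2=f_{12}(y_1,y_2)$ with $f_{12}\ne0$. This is the hypersurface in $\IP(2,2,6)$, recording $C$ as a double cover of $\IP^1$.

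The hard part is Type~B, where $|\omega_C|$ is not base point free, so the base-point-free pencil trick is unavailable and the generation argument must be run differently. Here I would split $C$ into its genus-one subcurves by means of the restriction sequence $0\to\omega_B\to\omega_C\to\omega_C|_A\to0$, whose pieces are governed by the genus-one curves of Example~\ref{exam: genus 1 Gorenstein curves}, or else apply the surjectivity criterion with the base point free system $|\omega_C^{\otimes2}|$, whose restriction to every subcurve has degree large enough (again by ampleness) to force the relevant $H^1$-vanishing. This produces one further generator $u$ in degree $6$ and shows that $y_1,y_2,v,u$ generate $R$; the Hilbert-function bookkeeping then yields exactly the two relations $y_1y_2=0$ and, after completing the square, $u^2=f_{12}(y_1,y_2,v)$, a codimension-two complete intersection in $\IP(2,2,4,6)$. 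The final point, that $f_{12}$ must contain $v^3$ with nonzero coefficient, follows once more from reducedness: otherwise $f_{12}\in(y_1,y_2)$, and along $\{y_1=y_2=0\}$ the relation degenerates to $u^2=0$, so $\Proj R$ would fail to be reduced of arithmetic genus two. Making the generation step uniform over all reducible configurations permitted by ampleness, and pinning down these normal-form constraints, is where most of the work lies.
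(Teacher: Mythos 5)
Your derivation of the dichotomy and your treatment of Type A are sound, and they take a genuinely different route from the paper: where the paper first bounds the number and genera of the components via the sequence $0\to\omega_{C_1}\to\omega_C\to\omega_C\restr{C_2}\to0$ and then splits into cases by component geometry, you split according to $\ker\bigl(\Sym^2H^0(\omega_C)\to H^0(2K_C)\bigr)$, using the Veronese conic to show this kernel is either zero or spanned by a product of two independent linear forms. That is an attractive reorganisation, and in Type A your use of Proposition \ref{prop: surjectivity} (the pencil trick for the one new generator $u$, then $H^1((n-1)K_C)=0$ for surjectivity) matches the paper's computation. Two small repairs there: base point freeness of $|2K_C|$ should be argued via $H^1(\ki_p(2K_C))=0$ from Lemma \ref{vanishingH^1} (your parenthetical about rational subcurves is not the relevant condition), and ``$u^2=0$ would make $R$ non-reduced'' does not show the $u^2$-coefficient of the degree-$12$ relation is nonzero; the clean argument is that $R$ is a free $\IC[y_1,y_2]$-module of rank two with basis $1,u$, so $u$ satisfies a monic quadratic relation.

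The genuine gaps are in Type B, which you explicitly defer as ``where most of the work lies.'' First, the generation statement is not proved: you must show there is exactly one new generator in each of degrees $4$ and $6$ and none afterwards, and this requires first establishing the geometry that your $\ker\mu_2$ dichotomy has not yet delivered --- namely that $C=C_1\cup C_2$ with $p_a(C_i)=1$, meeting in a single point $p$ which is a node of $C$ and the base point of $|K_C|$, with $y_i\restr{C_i}=0$ (the paper gets this from \cite[Lemmas 1.10, 1.12]{catanese82} and the restriction sequence). The degree-$6$ and degree-$8$ independence checks are then done by restricting monomials to the two components, and the surjectivity in higher degrees needs Proposition \ref{prop: surjectivity}\refenum{ii} for the step $H^0(2K_C)\otimes H^0(3K_C)\to H^0(5K_C)$, since condition \refenum{i} fails there because $H^1(K_C)\neq0$. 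Second, your argument that $v^3$ appears in $f_{12}$ is incorrect: if $f_{12}\in(y_1,y_2)$ the curve does \emph{not} become non-reduced --- on the component $\{y_2=0\}$ the relation reads $u^2=y_1g(y_1,v)$, which is generically reduced. What actually fails is that the point $(0:0:1:0)=C_1\cap C_2$ would then be a singular point of \emph{both} components, contradicting the fact that they meet transversally at a point smooth on each; so this step again presupposes the component-and-node analysis that your approach has postponed. As it stands, the Type B half of the proposition is a plan rather than a proof.
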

\begin{proof}
Note that for a Gorenstein curve the Hilbert series of the canonical ring is determined by the genus and the Riemann--Roch formula,  so in our case the series is $H(t)=1+2t^2+\sum_{n\ge2}(2n-1)t^{2n}$. Since 
\[t^2H(t)=t^2+2t^4+\sum_{n\ge3}(2n-3)t^{2n},\] we may subtract the latter series from the former to get  
\[(1-t^2)H(t)=1+t^2+t^4+\sum_{n\ge3}2t^{2n}.\]
 Repeating the above procedure, we have 
 \[t^2(1-t^2)H(t)=t^2+t^4+t^6+\sum_{n\ge4}2t^{2n}\]
 and subtracting yields $(1-t^2)^2H(t)=1+t^6$. Hence 
 \[H(t)=\frac{1+t^6}{(1-t^2)^2}=\frac{ (1-t^{12})}{(1-t^2)^2(1-t^6)}.\]

If $C$ is irreducible, we can proceed as for smooth curves (compare \cite[Section~3.1]{FPR17a}) and get a canonical ring of type A.

Now assume $C$ is reducible.  Note that an ample line bundle has positive degree on any component, so there are at most two components and we can write $C = C_1\cup C_2$ with $\deg(\omega_C|_{C_i}) = 1$.
 
By Lemma  \ref{lem:  sequence-for-omega}  we have an isomorphism  $\ki_{C_{2}} \omega_C \cong \omega_{C_{1}}$ and   there is an exact sequence 
\begin{equation}\label{eq: canonical decomposition}
 0 \to \omega_{C_1} \to \omega_C \to \omega_C|_{C_2}\to 0.
\end{equation}
This implies that $\omega_{C_i} \into \omega_C|_{C_i}$, 
so by Remark \ref{rem: degree stuff on C} we get  $\deg \omega_{C_i} \leq 0$ by parity and thus $C_i$ has arithmetic genus at most one. Therefore, each component is Gorenstein by \cite[Lemma 1.19]{catanese82}.

Taking global sections in \eqref{eq: canonical decomposition} after twisting by $(m-1)K_C$ with  $m\ge 1$,  we get an exact sequence 
\begin{equation}\label{eq: restriction sequence}
 0\to H^0(K_{C_1}+(m-1)  K_C|_{C_1}) \to H^0(m K_C) \to H^0(m K_C|_{C_2})\to0
 \end{equation}
 because by Serre duality  $H^1(K_{C_1}+(m-1)  K_C|_{C_1}) \to  H^1(m K_C)$ is an isomorphism for $m =1$  and both are zero for $m\geq 2$.

Now assume $p_a(C_2)=0$, that is, $C_2$ is a smooth rational curve. Then $\omega_C|_{C_2} = \ko_{C_2}(1)$ has two sections, so  sequence \eqref{eq: restriction sequence} for $m=1$ shows that $h^0(K_{C_1}) = 0 $, and therefore both components are rational. Also, since $K_C|_{C_i}$ has degree one, it is base-point free on the components and thus globally base point free, because all sections lift to $C$. 
 By Proposition \ref{prop: surjectivity} $(i)$  the multiplication map 
\[H^0(C, K_C) \otimes H^0(C, nK_C) \to H^0(C, (n+1)K_C)\]
is onto for $n \geq 3$.
For $n=1$ one can check easily   that $\langle y_1^2, y_2^2, y_1y_2\rangle$ span $H^0(C,2K_C)$, where $y_1, y_2$ is a basis of $H^0(K_C)$. 
  For $n=2$  the kernel of the multiplication map  has dimension two (generated by the trivial relations $y_1 \tensor y_1y_2 - y_2\tensor y_1^2, y_2 \tensor y_1y_2 - y_1\tensor y_2^2$),
   hence we need a new generator $u \in H^0(3K_C)$. 
   
   Concerning the relations, we obtain 
   a non-trivial  relation  between monomials in our generators in $H^0(6K_C)$, which we can write in the given form by completing the square. Comparing the Hilbert series,   we see that we have already found the full ring of type A.
   
Now we turn to the case where both components are of arithmetic genus one. By \cite[Lemma 1.12]{catanese82} the ideal $\ko_{C_2}(-C_1)$ is invertible and defines a subscheme of length $\deg \omega_C|_{C_2} - \deg \omega_{C_2} = 1$, i.e.,  a single point $p\in C$, which by \cite[Prop. 1.10]{catanese82} is a node of $C$.
Then sequence \eqref{eq: restriction sequence} for $m = 1$ shows that 
$p$ is a base point of the canonical system. Let $y_i$ be a generator of the kernel of $H^0(K_C)\to H^0(K_C|_{C_i})$. Then $H^0(K_C) = \langle y_1, y_2\rangle$ such that $y_i |_{C_i} = 0 $.
In particular, $y_1y_2 = 0$ and we  can write $H^0(2K_C) = \langle y_1^2, y_2^2, v\rangle$. 
Note by \eqref{eq: restriction sequence}, this time for $m=2$, that $y_1^2, v$ restricts to $C_2$ as a complete linear system of degree two, thus without base points. Hence also $|2K_C|$ has no base points, $v$ is non-zero on both components, and 
   by  Proposition \ref{prop: surjectivity} $(ii)$ for $n=3$ and $(i)$ for $n\ge 4$,   we obtain  surjections 
 $$H^0(C, 2K_C) \otimes H^0(C, nK_C) \onto H^0(C, (n+2)K_C)$$  for  every $n \geq 3$.
  
 Now consider the multiplication map $H^0(K_C) \otimes H^0(2K_C) \to H^0(3K_C)$. The image of this map is generated by the monomials  $\{y_1^3,y_2^3, y_1v, y_2v\}$.
 We have no linear relations among them since a combination 
 \[a_1y_1^3 + a_2y_2^3 + b_1y_1v+b_2y_2v =0\]
 when restricted to $C_1$ yields $  a_2y_2^3 +b_2y_2v =0$, whence $a_2=b_2=0$,  and then restricted to $C_2$ gives $a_1=b_1=0$.
 Therefore, since $h^0(C,3K_C)=5$ we need a new generator $u$ of degree 6. 
 
  Similarly we can show that $H^0(4K_C)$ is generated by the monomials
 \[\{y_1^4,y_2^4, y_1^2v, y_2^2v, v^2, y_1 u , y_2u\}.\]
  Summing up, the canonical ring $R(C,K_C)$ is generated by $\{y_1,y_2,v,u\}$.

Concerning the relations, we have trivial relations  induced by $y_1y_2=0$ in degree $\leq 10$, and a new relation in  degree $12$, which after completing the square can be written as $u^2 -f_{12}(y_1, y_2, v)$. 
Note that $f_{12}$ must contain the monomial $v^3$ with
non-zero coefficient, since otherwise the intersection point $C_1 \cap C_2$ corresponding to
the ideal $(y_1, y_2, u^2 - f_{12}(y_1, y_2, v))$ would be the point $(0 : 0 : 1 : 0)$ and therefore singular for both $C_1$ and $C_2$.
  Comparing the Hilbert series, $\frac{ (1-t^{12})(1-t^4)}{(1-t^2)^2(1-t^6)(1-t^4)}=\frac{ (1-t^{12})}{(1-t^2)^2(1-t^6)}$
   we see that we have already found the full ring of type B.    \end{proof}

\begin{cor}\label{cor: bicanonical map}
  Let $C$ be a reduced connected  Gorenstein curve of arithmetic genus two with ample canonical bundle. 
 Then the following hold:
 \begin{enumerate}
  \item The bicanonical map is a morphism of degree two onto a conic $ Q $ in $\IP^2$.
  \item The conic $ Q$ is reducible if and only if $C$ is of  type B.
       \item The rational involution $\iota$  induced on $C$ by the bicanonical map is regular.  
  \end{enumerate}
\end{cor}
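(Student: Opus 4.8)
The plan is to extract everything from the two explicit presentations of the canonical ring $R=R(C,\omega_C)$ in Proposition \ref{prop: types}. First I would record that $\deg\omega_C=2$ and, by Riemann--Roch together with $H^1(2K_C)=H^0(-K_C)^\vee=0$ (here $-K_C$ has negative degree on each component since $K_C$ is ample), that $h^0(2K_C)=3$. Thus the bicanonical map has target $\IP^2=\IP\bigl(H^0(2K_C)\bigr)$, and since $H^0(2K_C)$ is the weight-$4$ graded piece $R_4$, I only need to understand the subalgebra of $R$ generated by $R_4$. Reading off the monomials of weighted degree $4$ gives $R_4=\langle y_1^2,y_1y_2,y_2^2\rangle$ in Type~A and $R_4=\langle y_1^2,y_2^2,v\rangle$ in Type~B (using $y_1y_2=0$ in the latter).

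For (i) and (ii) I would treat the two types in parallel. In both cases base-point-freeness is immediate: a common zero of the chosen generators forces $y_1=y_2=0$ (and $v=0$ in Type~B), whence the relation $u^2=f_{12}$ gives $u=0$, i.e.\ the forbidden point $(0:\dots:0)$; so the bicanonical map is a morphism $\phi\colon C\to\IP^2$. In Type~A the three sections are the three degree-two monomials in $y_1,y_2$, so $\phi$ factors as the canonical pencil $(y_1:y_2)\colon C\to\IP^1$ followed by the degree-two Veronese, whose image is the smooth (irreducible) conic $Q=\{X_0X_2=X_1^2\}$; writing $C\subset\IP(1,1,3)$ as $\{u^2=f_6(y_1,y_2)\}$ shows the pencil is $2:1$ with fibres $\{u=\pm\sqrt{f_6}\}$, so $\phi$ has degree two onto $Q$. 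In Type~B the single relation among $y_1^2,y_2^2,v$ in weight $8$ is $y_1^2\cdot y_2^2=(y_1y_2)^2=0$, so $Q=\{X_0X_1=0\}$ is a reducible conic (two lines); since $y_i|_{C_i}=0$, the component $C_i$ maps into one line, and comparing $\deg(2K_C|_{C_i})=2$ with the degree $1$ of a line shows each $C_i\to L_i$ is $2:1$, hence $\phi$ again has degree two. This proves (i) and, since the Veronese conic is irreducible while $\{X_0X_1=0\}$ is not, also (ii).

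Finally, for (iii) I would exhibit the involution explicitly: in both presentations the assignment $u\mapsto -u$ (fixing $y_1,y_2,v$) is a graded automorphism of $R$, because the only relation involving $u$ is $u^2-f_{12}$ and $f_{12}$ lies in the subalgebra generated by $y_1,y_2,v$. It therefore induces a biregular involution $\iota$ of $C=\Proj R$. Since $\iota$ fixes $R_4$ pointwise it satisfies $\phi\circ\iota=\phi$, and since it negates the degree-$6$ generator $u$ it is nontrivial on the generic (two-point) fibre of $\phi$; being an involution over $Q$ on a degree-two cover, it must be the deck transformation, i.e.\ the rational involution induced by the bicanonical map. As it comes from a ring automorphism it is regular, giving (iii). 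The main thing to be careful about is the reducible Type~B case, where $Q$ is singular: there the degree-two claim has to be checked component by component using \eqref{eq: restriction sequence}, and one must confirm that the globally defined $\iota$ indeed preserves each component $C_i$ and restricts to the expected $2:1$ involution there.
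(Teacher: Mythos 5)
Your proposal is correct and follows essentially the same route as the paper: the paper derives (i) and (ii) directly from the presentations of the canonical ring in Proposition \ref{prop: types} (you simply make explicit the identification of $H^0(2K_C)$ with the degree-$4$ graded piece and the Veronese/reducible-conic dichotomy), and for (iii) it uses exactly your involution $u\mapsto -u$ of the ambient weighted projective space. No gaps; your extra checks (base-point-freeness, degree count on each component in Type B, nontriviality on the generic fibre) are just the details the paper leaves implicit.
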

\begin{proof} $(i)$ and $(ii)$ follow immediately  by  Proposition \ref{prop: types}.

$(iii)$ By Proposition \ref{prop: types} the curve $C$ is embedded in a weighted  projective space $\pp$ in such a way that the regular  involution of $\pp$ that sends $u$ to $-u$  and leaves the remaining variables invariant induces $\iota$ on $C$. 
\end{proof}

\begin{rem}\label{rem: linearisation} Let $X$ be a 
projective variety, in particular reduced and  Cohen--Macaulay by our conventions,    with an involution $\iota$ that acts 
non-trivially on every irreducible component of $X$ and let $\pi\colon X\to 
Y:=X/\iota$ be the quotient map. Then  $\pi_*\ko_X$ decomposes under the action 
of $\iota$ as a direct sum
  $\ko_Y\oplus \kf$, where $\kf$ is a torsion-free sheaf of rank one, and $\iota$ acts on $\ko_Y$ as the identity and on $\kf$ as multiplication  by $-1$. By relative duality we have $\pi_*\omega_X=\shom(\pi_*\ko_X,\omega_Y)$
    and the decomposition of $\pi_*\ko_X$ induces a decomposition $\pi_*\omega_X = \omega_Y\oplus \shom(\kf,\omega_Y)$, where the first summand is $\iota$-invariant and the second one is anti-invariant.
     We call the corresponding linearisation  of $\omega_X$ 
\emph{natural}. Note that when both $X$ and $Y$ are smooth the 
natural linearisation  corresponds to the natural action on differential forms, 
for which invariant
      $n$-forms on $X$ correspond to pull-backs of $n$-forms on $Y$;  note also that if  the map $\pi$ is flat, namely $\kf$ is a line bundle,  then 
       $\omega_X=\varphi^*(\omega_Y\otimes \kf\inv)$ 
      and  the opposite of the natural linearisation is the one induced by the fact that $\omega_X$ is a pull-back from $Y$.  

Consider now  a reduced Gorenstein curve $C$ of genus two with $\omega_C$ ample; the bicanonical map $\varphi\colon C\to Q$ is the quotient map for the bicanonical involution $\iota$ defined in Corollary \ref{cor: bicanonical map}.  Actually in Corollary \ref{cor: bicanonical map} $\iota$ is defined by its action on the canonical ring $R(C,\omega_C)$, which in addition determines  a  linearisation of $\omega_C$ for which the elements  of  $H^0(\omega_C)$ are all invariant. Since $H^0(\omega_Q)=0$, this is  the opposite of the natural linearisation and is the linearisation that we will always consider in our computations 
\end{rem}

\begin{cor}\label{cor: linearisation} 
With  respect to the linearisation fixed in Remark  \ref{rem: linearisation}, the invariant subring $R(C, \omega_C)^+\subset R(C, \omega_C)$ has Hilbert series $(1-t^2)^{-2}$.
\end{cor}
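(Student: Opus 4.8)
The plan is to read the answer directly off the two normal forms of the canonical ring established in Proposition \ref{prop: types}, using the explicit description of the bicanonical involution $\iota$ obtained in the proof of Corollary \ref{cor: bicanonical map}$(iii)$ and fixed in Remark \ref{rem: linearisation}. In both formats $\iota$ acts on $R(C,\omega_C)$ by fixing every generator of even $u$-degree, namely $y_1,y_2$ (and $v$ in Type B), and by sending $u\mapsto -u$. Since we work over $\IC$, the involution splits $R(C,\omega_C)$ into its $(\pm1)$-eigenspaces $R(C,\omega_C)=R^+\oplus R^-$, a decomposition compatible with the grading. Hence it suffices to identify $R^+$ as a graded vector space and sum up.

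First I would use the defining relation $u^2=f_{12}$ to present $R(C,\omega_C)$ as a free module of rank two over the subring $S$ generated by the invariant variables: in Type A this is $S=\IC[y_1,y_2]$, while in Type B it is $S=\IC[y_1,y_2,v]/(y_1y_2)$. In either case $R(C,\omega_C)=S\oplus uS$ as graded $S$-modules, with $\iota$ acting as $+1$ on $S$ and as $-1$ on $uS$. Therefore $R^+=S$ exactly, and the problem reduces to computing the Hilbert series of $S$ for the weights $\deg(y_1,y_2,v)=(2,2,4)$.

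In Type A the computation is immediate: $\IC[y_1,y_2]$ with both generators in degree two has Hilbert series $(1-t^2)^{-2}$, as claimed. In Type B I would use that $y_1y_2$ is a non-zero-divisor in the polynomial ring $\IC[y_1,y_2,v]$, giving a short exact sequence of graded modules in which multiplication by $y_1y_2$ raises degree by four; this yields
\[
H_S(t)=\frac{1-t^4}{(1-t^2)^2(1-t^4)}=\frac{1}{(1-t^2)^2},
\]
so that the contribution of $v$ is precisely cancelled by the relation. Both formats thus give the same series $(1-t^2)^{-2}$.

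There is no serious obstacle once Proposition \ref{prop: types} is in hand; the only delicate point is to confirm that $R^+$ is genuinely exhausted by the even-$u$-degree part and acquires no extra invariants, which is guaranteed by the rank-two freeness $R(C,\omega_C)=S\oplus uS$ together with the fact that the listed generators span each graded piece. As a consistency check one verifies that $H(t)-H_{R^+}(t)=t^6(1-t^2)^{-2}=t^6H_{R^+}(t)$ in both cases, matching the expected anti-invariant part $R^-=uS$ (with $\deg u=6$).
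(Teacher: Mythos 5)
Your proposal is correct and follows essentially the same route as the paper: both identify $R(C,\omega_C)^+$ with the subring generated by the even-degree variables ($\IC[y_1,y_2]$ in Type A, $\IC[y_1,y_2,v]/(y_1y_2)$ in Type B) and compute its Hilbert series, the Type B case reducing to $\frac{1-t^4}{(1-t^2)^2(1-t^4)}=(1-t^2)^{-2}$. Your explicit rank-two decomposition $R=S\oplus uS$ just makes precise the one-line identification the paper takes for granted.
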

\begin{proof}
The invariant subring $R(C, \omega_C)^+\subset R(C, \omega_C)$ is the ring generated by $y_1, y_2$ for type A and by $y_1, y_2, v$ for type B, so its  Hilbert series is $\frac{1}{(1-t^2)^{2}}=\frac{1-t^4}{(1-t^2)^2(1-t^4)}$. \end{proof}

We close this section by describing the geometry of the curves described in Proposition \ref{prop: types}.  The possibilities are illustrated in Table \ref{tab: list of curves}.

\begin{cor}\label{cor: description Types}
  Let $C$ be a reduced connected  Gorenstein curve of arithmetic genus two with ample canonical bundle.  Then 
 \begin{enumerate}
  \item If $C$ is of type A then:
  \begin{itemize}
  \item[(a)] the singularities of $C$ are  of type $A_{n_1},\dots A_{n_k}$, with $\sum_i (n_i+1)\le 6$;
  \item[(b)] $C$ is reducible iff $\sum_i (n_i+1)=6$ and all the $n_i$ are odd. 
In this case,  the components of $C$ are smooth rational curves exchanged by 
$\iota$.
  \end{itemize}
   \item If $C$ is of type $B$, then it is the union of two irreducible curves $C_i$  with $p_a(C_i)=1$ for $i=1,2$, that meet transversely at a single  point $p$ that is smooth for both.
  The involution  $\iota$ restricts on each component to an involution with smooth rational quotient curve. 
 \end{enumerate}
\end{cor}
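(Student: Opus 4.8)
The plan is to extract the geometry directly from the explicit weighted-projective models of Proposition~\ref{prop: types}, using throughout that the involution $\iota$ of Remark~\ref{rem: linearisation} acts by $u\mapsto -u$ and fixes the remaining variables, so that in both formats $C$ is a double cover of its bicanonical image.

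For Type~A I would first reweight $\IP(2,2,6)\cong\IP(1,1,3)$ and write $C$ as the hypersurface $u^2=f(y_1,y_2)$, where $f$ is the binary sextic corresponding to $f_{12}$. One checks that $C$ misses the unique quotient singularity $[0:0:1]$ of $\IP(1,1,3)$ (there $f(0,0)=0$ but $u\neq0$), so $C$ lies in the smooth surface $\IP(1,1,3)\setminus\{[0:0:1]\}$ and the projection $[y_1:y_2]$ realises it as a degree-two cover of $\IP^1$ branched over the zeros of $f$. Working in a local chart with coordinate $t$ on $\IP^1$ and fibre coordinate $u$, over a zero of multiplicity $r$ one has $u^2=t^r\cdot(\text{unit})$; absorbing a square root of the unit into $u$ gives the completed local ring $\IC\llbracket t,u\rrbracket/(u^2-t^r)$, i.e.\ an $A_{r-1}$ singularity (a smooth point when $r=1$). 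Since $\sum r=\deg f=6$ over all zeros, setting $n_i=r_i-1$ at the multiple zeros yields $\sum_i(n_i+1)=\sum_{\mathrm{sing}}r_i\le6$, which is statement~(1a).

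For statement~(1b) the point is that $u^2-f$ is reducible exactly when $f=g^2$ is a perfect square (with $\deg g=3$); then $C=\{u=g\}\cup\{u=-g\}$, each component is $\Proj\IC[y_1,y_2]\cong\IP^1$, and the two are interchanged by $\iota$. As $f$ is a perfect square iff every zero has even multiplicity, this occurs iff every $r_i$ is even, equivalently every $n_i$ is odd and there are no simple branch points, i.e.\ $\sum_i(n_i+1)=\sum r_i=6$; this is precisely the stated criterion.

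For Type~B I would invoke the proof of Proposition~\ref{prop: types}: ampleness gives $\deg(\omega_C|_\Gamma)=1$ on each component and hence at most two components, so the curve (necessarily reducible, since the irreducible case is Type~A) has exactly two irreducible components, which the relation $y_1y_2=0$ exhibits as $C_i=C\cap\{y_i=0\}$, each of arithmetic genus one and meeting the other in the single node $p$ found in that proof. The involution $\iota$ fixes $y_1,y_2,v$, hence preserves each $C_i$ and restricts to $u\mapsto-u$ with quotient $\Proj\IC[y_{3-i},v]\cong\IP^1$; and one confirms directly that $C\cap\{y_1=y_2=0\}$ is a single reduced point, since there the equations reduce to $u^2=c\,v^3$ with $c\neq0$ the coefficient of $v^3$, which in $\IP(4,6)\cong\IP(2,3)$ defines one point. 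I expect the only delicate step to be the local analysis in Type~A: one must verify that $C$ avoids the cone point of $\IP(1,1,3)$ so that no spurious singularities are introduced by the ambient space, and that the unit really can be absorbed into a square root over the complete local ring, yielding the normal form $u^2=t^r$ and therefore the type $A_{r-1}$.
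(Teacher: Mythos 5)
Your argument is correct and follows essentially the same route as the paper: for type A, pass to the affine/double-cover model $u^2=\prod(y-\alpha_i)^{m_i}$ coming from Proposition \ref{prop: types}, read off the $A_{m_i-1}$ singularities from the local normal form, and characterise reducibility by $f$ being a perfect square (the paper phrases this via the normalisation, you via the factorisation $u^2-g^2$, which is equivalent); for type B, both proofs simply quote the structure already established in the proof of Proposition \ref{prop: types} and Corollary \ref{cor: bicanonical map}. Your extra checks (that $C$ avoids the cone point of $\IP(1,1,3)$, and the direct verification that $C\cap\{y_1=y_2=0\}$ is a single reduced point rather than citing \cite{catanese82}) are correct refinements of the same argument.
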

 \begin{proof} $(i)$ Up to a linear change in the coordinates $y_1,y_2$ we may assume that an open set  $U$ of $C$ containing all the singularities is $ U=\{ u^2-\Pi_1^r(y-\alpha_i)^{m_i}=0 \}\subset \mathbb A^2_{y,u}$.
 where the $\alpha_i\in \mathbb C$ are distinct, $\sum m_i=6$, and moreover  $m_i$  is  odd if and only if $ i\le k$, namely we group  all the odd  $m_i$  at the beginning. 
 The singularities are the points $(\alpha_i,0)$ with $m_i>1$ and they are of type  $A_{n_i}$, $n_i:=m_i-1$. 
 The normalization of $U$ is $u^2-\Pi_1^k(y-\alpha_i)$, so it is reducible if and only if  all the $m_i$ are even.

 $(ii)$ We have already noticed in the proof of Proposition \ref{prop: types} that $C_1$ and $C_2$ have arithmetic genus 1 and meet in a point $p$ that is a node for $C$. The rest is an immediate consequence of Proposition \ref{prop: types} and Corollary  \ref{cor: bicanonical map}.
  \end{proof}

\begin{table}
 \caption{The possible curves of type A and B, with components marked with the geometric genus}
 \label{tab: list of curves}

\begin{tabular}[t]{ccc}
 \cmidrule[\heavyrulewidth](rl){1-2} \cmidrule[\heavyrulewidth](rl){3-3}
\multicolumn{2}{c}{ Type A} 
&
Type B  \\  \cmidrule(rl){1-2} \cmidrule(rl){3-3}
\begin{tikzpicture}
 \draw (-1,0) to[bend left] (1,0) node[above right] {2};
 \end{tikzpicture}
&
\begin{tikzpicture}
 \draw (-1.5,0) to[out = 30, in = 150] ++ (1,0) to[out = -30, in = 210, loop]  ++ (0,0) to[out = 30, in = 150] ++ (1,0) to[out = -30, in = 210, loop]  ++ (0,0) to[out = 30, in = 150] ++(1, 0)  node[above right] {0};
\end{tikzpicture}

&

\begin{tikzpicture}
 \draw (-.25,0) to[bend left] ++(2,0) node[above right] {1};
 \draw (-1.75,0)node[above left] {1} to[bend left] ++(2,0) ;
\end{tikzpicture}
\\

\begin{tikzpicture}
 \draw (-1,0) to[out = 30, in = 150] ++ (1,0) to[out = -30, in = 210, loop]  ++ (0,0) to[out = 30, in = 150] ++(1, 0)  node[above right] {1};
\end{tikzpicture}
&
\begin{tikzpicture}
 \draw (-1.5,0)  to[out = 30, in = 90] ++ (1,-.25) to[out = 90, in =150] ++ (1,.25) to[out = -30, in = 210, loop]  ++ (0,0) to[out = 30, in = 150] ++(1, 0)  node[above right] {0};
\end{tikzpicture}
&

\begin{tikzpicture}
 \draw (-.25,0) to[out = 30, in = 150] ++ (1,0) to[out = -30, in = 210, loop]  ++ (0,0) to[out = 30, in = 150] ++(1, 0)  node[above right] {0};
 \draw (-1.75,0)node[above left] {1} to[bend left] ++(2,0) ;
\end{tikzpicture}

\\
\begin{tikzpicture}
 \draw (-1,0) to[out = 0, in = -90] ++ (1,0.5)node[right] {\scriptsize $A_2$} to[out = -90, in = 180] ++(1, -.5)  node[above right] {1};
\end{tikzpicture}
&
\begin{tikzpicture}
 \draw (-1.5,0)  to[out = 30, in = 90] ++ (1,-.25) to[out = 90, in =90] ++ (1,0) to[out = 90, in =150]  ++(1, .25)  node[above right] {0};
\end{tikzpicture}
&
\begin{tikzpicture}
 \draw (-.25,0.1)  to[out = 0, in = -90] ++ (1,0.5) to[out = -90, in = 180] ++(1, -.5)  node[above right] {0};
 \draw (-1.75,0)node[above left] {1} to[bend left] ++(2,0) ;
\end{tikzpicture}
\\
\begin{tikzpicture}
\draw (1,.5) node[ right]{0} to[in = 0, out = 225] ++ ( -1,-.5) node[above]{\scriptsize $A_3$} to[in = 0, out = 180] ++ (-1,.5) to[out = 180, in= 180] ++ (0,-1) to[in = 180, out = 0] ++ (1,.5) to[out = 0, in  = 135] ++ (1, -.5);
 \end{tikzpicture}
& 

\begin{tikzpicture}
 \draw (-1.5,0)  to[out = 30, in = 210, looseness = 2] ++ (3,.5) node[ right] {0};
 \draw (-1.5,.5)  to[out = -30, in = -210, looseness = 2] ++ (3,-.5) node[ right] {0};
\end{tikzpicture}
&

\begin{tikzpicture}
 \draw (-.25,0) to[out = 30, in = 150] ++ (1,0) to[out = -30, in = 210, loop]  ++ (0,0) to[out = 30, in = 150] ++(1, 0)  node[above right] {0};
 \draw (-1.75,0)node[above left] {0} to[out = 30, in = 150] ++ (1,0) to[out = -30, in = 210, loop]  ++ (0,0) to[out = 30, in = 150] ++(1, 0);
\end{tikzpicture}

\\ 
\begin{tikzpicture}
 \draw (-1,0) to[out = 0, in = -90, looseness = 1.4] ++ (1,0.5) node[right]{\scriptsize $A_4$} to[out = -90, in = 180, looseness = 1.4] ++(1, -.5)  node[above right] {0};
\end{tikzpicture}

&
\begin{tikzpicture}
\draw (1,.5) node[ right]{0} to[in = 0, out = 225] ++ ( -1,-.5) node[above]{\scriptsize $A_3$} to[in = 0, out = 180] ++ (-1,.25) to[out = 180, in= 45] ++ (-.5,-.5);
\draw (1,-.5) node[ right]{0} to[in = 0, out = -225] ++ ( -1,.5) to[in = 0, out = 180] ++ (-1,-.25) to[out = 180, in= -45] ++ (-.5,.5);
\end{tikzpicture}
&

\begin{tikzpicture}
 \draw (-.25,0.1)  to[out = 0, in = -90] ++ (1,0.5) to[out = -90, in = 180] ++(1, -.5)  node[above right] {0};
 \draw (-1.75,0)node[above left] {0} to[out = 30, in = 150] ++ (1,0) to[out = -30, in = 210, loop]  ++ (0,0) to[out = 30, in = 150] ++(1, 0);
\end{tikzpicture}

\\

\multicolumn{2}{c}{
\begin{tikzpicture}
\draw (1,.5) node[ right]{0} to[in = 0, out = 225] ++ ( -1,-.5) node[above]{\scriptsize $A_5$} to[in = -45, out = 180] ++ (-1,.5);
\draw (1,-.5) node[ right]{0} to[in = 0, out = -225] ++ ( -1,.5) to[in = 45, out = 180] ++ (-1,-.5);
\end{tikzpicture}
}

& 
\begin{tikzpicture}
 \draw (-.25,0)  to[out = 0, in = -90] ++ (1,0.5) to[out = -90, in = 180] ++(1, -.5)  node[above right] {0};
 \draw (-1.75,0)node[above left] {0}  to[out = 0, in = -90] ++ (1,0.5) to[out = -90, in = 180] ++(1, -.5) ;
\end{tikzpicture}

\\
\cmidrule[\heavyrulewidth](rl){1-2} \cmidrule[\heavyrulewidth](rl){3-3}
\end{tabular}
\end{table}

\section{Generalised Gorenstein spin curves }
\label{sect: ggs}

A spin structure (or theta-characteristic)  on a smooth curve is a line bundle $\kl$ such that there is an isomorphism $\mu\colon\kl\otimes \kl \to \omega_C$. 
Here we propose a generalisation of this notion to Gorenstein curves. We remark that in the case of (semi-)stable curves there are several analogous notions in the literature (see e.g.,  \cite{Cornalba89,Jarvis98,Ludwig10}).

\begin{defin} 
Let $C$ be a projective  Gorenstein curve.
 A generalised Gorenstein spin (ggs) structure on $C$ is a pair $(\kl, \mu)$ where $\kl$ is a torsion-free sheaf of rank $1$ on $C$ with $\chi(\kl) = 0 $, and 
 \[ \mu \colon \kl\tensor_{\ko_C} \kl \to \kl\refl{2} \to \omega_C\]
 is a map which is an isomorphism in codimension zero, that is, at the generic point of each component of $C$. 
 We say the ggs structure is regular if $\kl$ is a line bundle.
 
 Isomorphisms of ggs structures are  defined in the obvious way, namely, ggs structures $(C_1, \kl_1,\mu_1)$  and $(C_1, \kl_1,\mu_1)$   
 are isomorphic if there are  an isomorphism $\phi\colon C_1\to C_2$ and   an isomorphism $\kl_1\to \phi^*\kl_2$ compatible with $\mu_1$ and $\phi^*\mu_2$. 
 In particular, when $\phi$ is the identity we may rescale $\mu$ by a non-zero scalar. 
  \end{defin}
 Note that the second condition implies that $\kl$ is generically locally free, since $\omega_C$ is locally free. This is  automatic on a reduced curve. 
 
Note also that for reasons of degree the multiplication map is an isomorphism if the ggs structure is regular. If $C$ is smooth,  all ggs structures are regular
because a  torsion-free sheaf is locally free at a smooth point of any curve.

In some applications ggs structures will appear naturally also on non-reduced curves but uniform results seem to be much harder to come by, so we do not address this case here.

The following is true if every partial normalisation of the Gorenstein curve $C$ is again Gorenstein, but we state it only for the case that we need.

\begin{prop}\label{prop: irregular ggs}
 Let $(\kl, \mu)$  be a ggs structure on a reduced Gorenstein curve with only singularities of type $A_m$. Then
 there exists a (uniquely determined)  partial normalisation $f\colon\tilde C \to C$ and a regular ggs structure  $(\tilde \kl, \tilde \mu)$  on $\tilde C$ such that 
 \[\kl = f_* \tilde \kl \text{ and }\kl\refl{2} = f_*\tilde\kl^{\tensor 2}.\]
\end{prop}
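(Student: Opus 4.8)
The plan is to extract everything from Lemma \ref{lem: pushforward}, whose hypotheses hold here: by Example \ref{example: A sing} every partial normalisation of a curve with only $A_m$ singularities is again of this kind (with milder singularities) or smooth, hence Gorenstein. Thus Lemma \ref{lem: pushforward}(i) already produces the uniquely determined partial normalisation $f\colon \tilde C\to C$ and a line bundle $\tilde\kl$ on the reduced Gorenstein curve $\tilde C$ with $\kl=f_*\tilde\kl$, giving the first displayed identity and the uniqueness clause; moreover $\chi(\tilde\kl)=\chi(f_*\tilde\kl)=\chi(\kl)=0$ since $f$ is finite. It then remains to establish the second identity $\kl\refl{2}\cong f_*\tilde\kl^{\tensor 2}$ and to construct a compatible isomorphism $\tilde\mu$ realising a regular ggs structure.

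The identity $\kl\refl{2}\cong f_*\tilde\kl^{\tensor 2}$ is the main point and the step I expect to cost the most work. Both sides are reflexive rank one sheaves on the Gorenstein curve $C$ by Lemma \ref{lem:reflexive}, so it suffices to check that the natural multiplication map $\kl^{\tensor 2}\to f_*\tilde\kl^{\tensor 2}$ becomes an isomorphism after reflexive hull, and this can be done stalkwise. Fixing a singular point $p$ and writing $R=\OO_{C,p}$ and $R'$ for the semilocal ring of $\tilde C$ above $p$ inside the total quotient ring $Q$ of $R$, local freeness of $\tilde\kl$ gives $\kl_p\cong R'$ and $(f_*\tilde\kl^{\tensor 2})_p\cong R'$ as $R$-modules, and the natural map becomes the ring multiplication $R'\tensor_R R'\to R'$. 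This is surjective and, because $f$ is birational, an isomorphism after tensoring with $Q$ over $R$; hence its kernel is torsion, supported at $p$. Dualising the resulting short exact sequence twice then annihilates this torsion kernel and identifies $(R'\tensor_R R')^{\vee\vee}$ with $(R')^{\vee\vee}=R'$, giving $(\kl\refl{2})_p\cong (f_*\tilde\kl^{\tensor 2})_p$. Since these local isomorphisms are induced by the single global map, they glue to the desired global isomorphism.

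To build $\tilde\mu$ I would transport $\mu$ along $f$ by relative duality, exactly as in the proof of Lemma \ref{lem:  sequence-for-omega}. Using $\omega_{\tilde C}=f^!\omega_C$ one obtains $f_*\shom_{\tilde C}(\tilde\kl^{\tensor 2},\omega_{\tilde C})\cong \shom_C(f_*\tilde\kl^{\tensor 2},\omega_C)=\shom_C(\kl\refl{2},\omega_C)$, so that on global sections the given map $\mu\colon\kl\refl{2}\to\omega_C$ corresponds to a unique $\tilde\mu\colon\tilde\kl^{\tensor 2}\to\omega_{\tilde C}$. As $f$ is an isomorphism at the generic points, $\tilde\mu$ inherits from $\mu$ the property of being an isomorphism in codimension zero, so in particular it is a nonzero, hence injective, map of sheaves on $\tilde C$.

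Finally I would verify regularity by a degree count. By Remark \ref{rem: degree stuff on C} and $\chi(\tilde\kl)=0$ we have $\deg\tilde\kl=p_a(\tilde C)-1$, whence $\deg\tilde\kl^{\tensor 2}=2p_a(\tilde C)-2=\deg\omega_{\tilde C}$. The cokernel of the injection $\tilde\mu$ therefore has length $\deg\omega_{\tilde C}-\deg\tilde\kl^{\tensor 2}=0$, so $\tilde\mu$ is an isomorphism. Hence $(\tilde\kl,\tilde\mu)$ consists of a line bundle with $\chi=0$ together with an isomorphism $\tilde\kl^{\tensor 2}\cong\omega_{\tilde C}$, that is, a regular ggs structure; it is unique since $(f,\tilde\kl)$ is unique by Lemma \ref{lem: pushforward}(i) and $\tilde\mu$ is determined by $\mu$ through the duality isomorphism.
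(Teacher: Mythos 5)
Your proposal is correct and follows essentially the same route as the paper: existence and uniqueness of $(f,\tilde\kl)$ via Lemma \ref{lem: pushforward} and Example \ref{example: A sing}, the local identification of the multiplication map with $R'\otimes_R R'\to R'$ (surjective with torsion kernel, hence an isomorphism after reflexive hull), and relative duality to transport $\mu$ to $\tilde\mu$. The only addition is your final degree count showing $\tilde\mu$ is a genuine isomorphism, which is not required by the definition of a regular ggs structure and is noted separately in the paper as an automatic consequence of regularity.
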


\begin{proof}
Since at a smooth point of a curve every torsion-free sheaf is free, the question is local at the singular points of $C$. 
Then the existence of the partial normalisation and the line bundle $\tilde \kl$ follows from  Lemma \ref{lem: pushforward}  its unicity from the description in Example \ref{example: A sing}. 

 For the last statement, fix a singular point $P\in C$ and denote by $Z$ its preimage in $\tilde C$. Note that it is possible to find an affine open set of $\tilde C$  containing  $Z$ on which $\tilde{\kl}$ is trivial. In fact,  twisting $\tilde{\kl}$ with a sufficiently ample effective divisor $H$ disjoint from $Z$  we may assume that $|\kl(H)|$ is base point free. So  there is a section $\sigma\in H^0(\tilde\kl(H))$ that does not vanish at any point of $Z$, and therefore   trivializes $\tilde \kl$ on an affine open set containing $Z$ and contained in the  complement of $H$.  Therefore  we may replace   $C$ by  affine open neighbourhood  $U=\spec A$ of $P$, $\tilde C$ by the preimage  $\tilde U=\spec\tilde A$  of $U$ in $\tilde C$ and $\tilde \kl$ by  the trivial bundle.  The map  $f_*\tilde \kl \otimes f_*\tilde \kl\to f_*\tilde\kl^{\otimes 2}$ corresponds to the map of $A$-modules $\tilde A\otimes_A \tilde A\to\tilde A$ induced by the multiplication of $\tilde{A}$, and thus it is surjective. 
 Since both sheaves have rank one and $f_*\tilde\kl$ is torsion-free, the induced map $\kl\refl{2}\to f_*\tilde\kl^{\tensor 2}$ is an isomorphism.

It remains to show that $(\tilde \kl, \tilde \mu)$    is a regular ggs structure on $\tilde C$. 
We have seen in Example \ref{example: A sing} that $\tilde C$ is again Gorenstein, so we can apply relative duality for a finite morphism of Gorenstein schemes, e.g., \cite[\href{https://stacks.math.columbia.edu/tag/0AU3}{Section 0AU3}]{stacks-project}, which gives
\[\Hom(\kl^{[2]}, \omega_C) = \Hom(f_*\tilde \kl^{\tensor 2}, \omega_C)\isom \Hom(\tilde \kl^{\tensor 2}, \omega_{\tilde C}),\]
and clearly $0 = \chi(\kl)  = \chi (\tilde \kl)$.
Restriction to the subset where $f$ is an isomorphism shows that $\phi \colon \kl\refl{2} \to \omega_C$ is generically an isomorphism if and only if the corresponding $\tilde \phi \colon \tilde \kl ^{\tensor 2} \to \omega_{\tilde C}$ is, completing the correspondence between ggs structures. 
\end{proof}
With Proposition \ref{prop: irregular ggs} in place, the enumeration of regular and irregular ggs structures on any given curve with type $A_n$  singularities is relatively simple, but the number of cases to consider grows quickly, if there are several possible partial normalisations to consider.

\begin{exam}
On a smooth rational curve, the only ggs structure  is $\kl\cong \ko_{\IP^1}(-1)$.
\end{exam}
\begin{prop}\label{prop: existence ggs}
  Let $C=\bigcup_i C_i$ be a reduced and connected Gorenstein curve, where $C_i$ are  the irreducible components.
  \begin{enumerate}
   \item The curve $C$ admits a regular ggs  structure $(\kl, \mu)$  if and only if  $\deg \omega_C|_{C_i}$ is even on every component $C_i$.
   \item If $C$ admits a regular ggs structure, then it admits exactly $2^{b_1}$ different regular ggs structures, where $b_1$ is the first Betti number of $C$.
  \end{enumerate}
In particular, every irreducible Gorenstein curve admits a regular ggs structure  while a curve with a separating node does not admit a regular ggs structure . 
\end{prop}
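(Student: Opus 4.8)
The plan is to recognise a regular ggs structure as nothing but a square root of $\omega_C$ in the Picard group. Indeed, if $\kl$ is a line bundle with $\chi(\kl)=0$, then $\deg\kl=p_a(C)-1$ by Remark \ref{rem: degree stuff on C}, so $\deg\kl^{\tensor2}=2p_a(C)-2=\deg\omega_C$; since $C$ is connected and reduced, a map $\mu\colon\kl^{\tensor2}\to\omega_C$ that is an isomorphism at the generic points is injective (as $\kl^{\tensor2}$ is torsion-free) with torsion cokernel of length $\deg\omega_C-\deg\kl^{\tensor2}=0$, hence an isomorphism. Conversely an isomorphism $\kl^{\tensor2}\isom\omega_C$ forces $\chi(\kl)=0$. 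So regular ggs structures on $C$, up to isomorphism over the identity, correspond bijectively to isomorphism classes of line bundles $\kl$ with $\kl^{\tensor2}\isom\omega_C$, i.e.\ to square roots of $\omega_C$ in $\Pic(C)$. Statement $(i)$ then becomes the assertion that $\omega_C$ is divisible by $2$ in $\Pic(C)$ if and only if $\deg\omega_C|_{C_i}$ is even for all $i$; and since the set of square roots, when non-empty, is a torsor under $\Pic(C)[2]$, statement $(ii)$ becomes $|\Pic(C)[2]|=2^{b_1}$.

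For $(i)$ I would use the multidegree sequence
\[0\to\Pic^0(C)\to\Pic(C)\xrightarrow{\ \deg\ }\IZ^r\to0,\]
where $r$ is the number of components and the multidegree map is surjective because $\ko_C(p)$ for a smooth point $p\in C_i$ has multidegree $e_i$. The kernel $\Pic^0(C)$ is a connected commutative algebraic group over $\IC$, hence a divisible abstract group; therefore $2\Pic^0(C)=\Pic^0(C)$ and reduction mod $2$ yields an isomorphism $\Pic(C)/2\Pic(C)\isom(\IZ/2\IZ)^r$ induced by the multidegree. Thus $\omega_C\in2\Pic(C)$ if and only if its multidegree is even in every entry, which is exactly the condition that $\deg\omega_C|_{C_i}$ be even for all $i$.

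For $(ii)$ the same sequence shows $\Pic(C)[2]\isom\Pic^0(C)[2]$ (as $\IZ^r$ is torsion-free), so it suffices to count the $2$-torsion of the generalised Jacobian. Let $\nu\colon\tilde C=\bigsqcup_i\tilde C_i\to C$ be the normalisation; the normalisation sequence for $\ko^*$ yields an extension
\[0\to H\to\Pic^0(C)\to\prod_i\operatorname{Jac}(\tilde C_i)\to0,\]
with $H$ a connected commutative affine group, i.e.\ a product $T\times U$ of a torus and a unipotent group. Over $\IC$ tori, unipotent groups and abelian varieties are all divisible, so applying the snake lemma for multiplication by $2$ repeatedly gives $|\Pic^0(C)[2]|=|T[2]|\cdot|U[2]|\cdot\prod_i|\operatorname{Jac}(\tilde C_i)[2]|=2^{\dim T}\cdot1\cdot2^{2\sum_i g_i}$, where $g_i$ is the genus of $\tilde C_i$: the unipotent part contributes nothing. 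It then remains to identify $\dim T+2\sum_i g_i$ with $b_1$. Writing $b_P$ for the number of branches of $C$ at a singular point $P$, the torus rank is $\dim T=\sum_P(b_P-1)-r+1$ (the cokernel of the map from the $r$ component-scalars into the local branch-tori), while computing the topological Euler characteristic, $\chi(C)=\sum_i(2-2g_i)-\sum_P(b_P-1)$ together with $b_0(C)=1$ and $b_2(C)=r$, gives $b_1(C)=2\sum_i g_i+\sum_P(b_P-1)-r+1$. These expressions agree, so $|\Pic(C)[2]|=2^{b_1}$. Alternatively one can bypass the structure theory via the analytic Kummer sequence $0\to\IZ/2\IZ\to\ko_C^*\xrightarrow{(\ )^2}\ko_C^*\to0$, which (using GAGA and surjectivity of squaring on global units) identifies $\Pic(C)[2]\isom H^1(C,\IZ/2\IZ)$, of dimension $b_1$ since $H_1(C,\IZ)$ is torsion-free.

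Finally, the ``in particular'' statements follow from $(i)$. If $C$ is irreducible there is one component and $\deg\omega_C=2p_a(C)-2$ is even, so a regular ggs structure exists. If $C$ has a separating node, write $C=A\cup B$ with $A\cap B=\{p\}$ the node; by Lemma \ref{lem: sequence-for-omega} (adjunction) $\omega_C|_A\isom\omega_A(p)$, so $\deg\omega_C|_A=2p_a(A)-1$ is odd, whence some irreducible component of $A$ carries odd degree and $(i)$ forbids a regular ggs structure. I expect the main obstacle to be $(ii)$: pinning down the torus rank of the generalised Jacobian and checking that the unipotent part of non-nodal singularities is invisible to $2$-torsion, together with the topological bookkeeping matching this count to $b_1$.
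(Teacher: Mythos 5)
Your proposal is correct, and its skeleton coincides with the paper's: regular ggs structures are exactly the square roots of $\omega_C$ in $\Pic(C)$, these form a torsor under $\Pic(C)[2]=\Pic^0(C)[2]$ when non-empty, and existence is governed by the parity of the multidegree because $\Pic^0(C)$ is divisible (the paper phrases this as twisting $\omega_C$ down to $\eta\in\Pic^0(C)$ by $-\sum 2d_ip_i$ and extracting a square root there, which is your multidegree-sequence argument in different clothing). The genuine divergence is in how part \refenum{ii} is finished: the paper reads the count straight off the exponential sequence $0\to H^1(C,\IZ)\to H^1(C,\ko_C)\to\Pic^0(C)\to 0$, identifying the $2$-torsion with $\tfrac12 H^1(C,\IZ)/H^1(C,\IZ)\isom(\IZ/2\IZ)^{b_1}$ in one line --- essentially your Kummer-sequence ``alternative'' --- whereas your primary route dissects the generalised Jacobian via the normalisation sequence, counts $2$-torsion separately on the toric, unipotent and abelian parts, and reconciles $\dim T+2\sum_i g_i$ with $b_1$ by an Euler-characteristic computation. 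I checked the bookkeeping: the torus rank $\sum_P(b_P-1)-r+1$ (using connectedness of $C$ so that the global scalars contribute an $(r-1)$-dimensional subtorus) and the identity with $b_1$ are both right, and the unipotent part is indeed $2$-divisible with trivial $2$-torsion in characteristic zero. What this longer route buys is an explicit picture of where the $2^{b_1}$ square roots come from (loops in the dual graph versus the Jacobians of the normalised components), at the cost of the structure theory you flag as the main obstacle; the paper's argument is shorter but less informative. Your justification of the ``in particular'' clause --- $\deg\omega_C|_A=2p_a(A)-1$ odd on one side of a separating node, forcing odd degree on some component --- is a clean proof of a claim the paper states without argument.
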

\begin{proof}
 Let $(\kl, \mu)$  be a regular ggs structure. Then for any $2$-torsion line bundle $\eta\in \Pic^0(C)$ the bundle $\kl \tensor \eta$ is a again a ggs structure and the set of regular ggs structures is a torsor under the group of $2$-torsion line bundles $\Pic^0(C)[2]$. By the exponential sequence, $\Pic^0(C)$ is a connected algebraic group that fits in an exact sequence 
  \begin{equation}\label{eq: exp seq}
 0 \to H^1(C, \IZ) \to H^1(C, \ko_C) \to \Pic^0(C) \to 0 
 \end{equation}
  and therefore  $2$-torsion elements in this group are given by the image of $\frac 12 H^1(C, \IZ)$ in $\Pic^0(C)$. This proves $(ii)$.
  
   If $\omega_C|_{C_i}$ has odd degree on any component, then there cannot exist a regular ggs structure $( \kl, \mu)$ on $C$, because $2\deg \kl|_{C_i} = \deg \omega_C|_{C_i}$. 
 
 On the other hand, assume that $\deg \omega_C|_{C_i} = 2d_i$ for all $i$ and pick a smooth point $p_i$ in $C_i$. Then $\eta :=\omega_C\left(-\sum_i 2d_i p_i\right)$ has degree zero on each component and thus $\eta\in \Pic^0(C)$. 
Since the group  $\Pic^0(C)$ is 2-divisible, for example by the description in \eqref{eq: exp seq}, we can pick an element $\eta'$ such that $\eta'^{\tensor 2} = \eta$. Then $\eta'\left( \sum_i d_i p_i \right)$ is a regular ggs structure on $C$.
\end{proof}

 We now collect the necessary information about ggs structures on the curves we are interested in. 
\begin{prop}\label{prop: ggs on genus 2}
 Let $C$ be a Gorenstein curve of arithmetic genus two with $\omega_C$ ample, bicanonical involution $\iota$ and ggs structure $(\kl, \mu)$. Then 
 \begin{enumerate}
  \item $\kl$ is invariant under the involution, that is, $\iota^*\kl \isom \kl$;
  \item $h^0(C, \kl)\leq 2$ and equality holds if and only if $C = C_1\cup C_2$ is of Type $B$ and $\kl = \ko_{C_1} \oplus \ko_{C_2}$ is the pushforward of the unique ggs  structure with a non-zero  section on each  component.
  \item $\mu\colon \kl\tensor\kl \to \omega_C$ is an isomorphism on the  locus  where $\kl$ is invertible. 
  \end{enumerate}
\end{prop}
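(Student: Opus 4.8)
The plan is to deduce all three statements from Proposition \ref{prop: irregular ggs}, which writes $\kl=f_*\tilde\kl$ for the uniquely determined partial normalisation $f\colon\tilde C\to C$ and a \emph{regular} ggs structure $\tilde\kl$ on $\tilde C$. First I record that $\tilde\kl$ is a genuine theta characteristic: it is a line bundle, and since $\chi(\tilde\kl)=0$ forces $2\deg\left(\tilde\kl|_{\tilde C_i}\right)=\deg\left(\omega_{\tilde C}|_{\tilde C_i}\right)$ on each component, the generically bijective map $\tilde\kl^{\tensor2}\to\omega_{\tilde C}$ between line bundles of equal degree is an isomorphism. Consequently $\kl\refl2=f_*\tilde\kl^{\tensor2}=f_*\omega_{\tilde C}$, the key identity feeding the rest of the argument.

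For \refenum{iii}, let $U\subseteq C$ be the open locus where $\kl$ is invertible; this is exactly where $f$ is an isomorphism, since $f_*\tilde\kl$ fails to be locally free precisely over the points being normalised. On $U$ the natural map $\kl\tensor\kl\to\kl\refl2$ is an isomorphism, so it suffices to show that $\phi\colon\kl\refl2=f_*\omega_{\tilde C}\to\omega_C$ is an isomorphism on $U$. By relative duality for the finite morphism $f$ (as in the proof of Proposition \ref{prop: irregular ggs}), $\phi$ is the map $\shom_{\ko_C}(f_*\ko_{\tilde C},\omega_C)\to\shom(\ko_C,\omega_C)=\omega_C$ given by restriction of homomorphisms along $\ko_C\into f_*\ko_{\tilde C}$; its cokernel is supported on $\Supp(f_*\ko_{\tilde C}/\ko_C)=C\setminus U$. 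As $\phi$ is injective ($\kl\refl2$ is torsion-free and $\phi$ is generically bijective), it follows that $\phi|_U$ is both injective and surjective, hence an isomorphism.

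For \refenum{ii}, I use that $H^0(C,\kl)=H^0(\tilde C,\tilde\kl)$ and that the right-hand side is the sum of the contributions of the connected components of $\tilde C$. It then suffices to bound $h^0$ of a theta characteristic on a connected Gorenstein curve of arithmetic genus at most two: there $h^0(\tilde\kl)=h^1(\tilde\kl)$ because $\chi=0$, and a Clifford-type estimate — checked directly in genus $0,1,2$ and for the handful of connected reducible configurations allowed by Corollary \ref{cor: description Types} — gives $h^0(\tilde\kl)\le1$, with equality on a connected component only when that component has genus one and $\tilde\kl\isom\ko$. Summing yields $h^0(C,\kl)\le2$, and equality forces $\tilde C$ to split into two connected components, each of genus one carrying the trivial theta characteristic. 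Tracing back through $f$, the only curve admitting such a splitting is a Type B curve with its node separated, so that $\kl=f_*\ko_{\tilde C}=\ko_{C_1}\oplus\ko_{C_2}$ is the push-forward of the unique regular ggs structure that restricts to $\ko$ on each component.

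Finally, for \refenum{i} I lift $\iota$ to an involution $\tilde\iota$ of $\tilde C$, which is possible because $\iota$ permutes the singular points preserving their type and hence, by uniqueness, preserves the partial normalisation; then $\iota^*\kl=f_*\tilde\iota^*\tilde\kl$, so $\iota^*\kl\isom\kl$ if and only if $\tilde\iota^*\tilde\kl\isom\tilde\kl$. When $\tilde\iota$ swaps two connected components of $\tilde C$ the invariance is immediate, so assume $\tilde\iota$ preserves a connected component $D$. By Corollary \ref{cor: description Types} the quotient $D/\tilde\iota$ is rational, so $\Pic^0(D/\tilde\iota)=0$, and applying the norm map of the double cover $D\to D/\tilde\iota$ to $M\in\Pic^0(D)$ gives $M\tensor\tilde\iota^*M\isom\ko$; thus $\tilde\iota$ acts as $-1$ on $\Pic^0(D)$ and trivially on $\Pic^0(D)[2]$. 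As the theta characteristics of $D$ form a torsor under $\Pic^0(D)[2]$ on which $\tilde\iota$ acts by a fixed translation, it suffices to exhibit a single invariant one: for $p_a(D)=2$ the class $\ko_D(R)$ of a ramification point $R$ works, since $2R$ is the pullback of a branch point, so that $\ko_D(R)^{\tensor2}\isom\omega_D$ and $\tilde\iota(R)=R$, while for $p_a(D)\le1$ the trivial, respectively unique, theta characteristic is visibly invariant. Hence the translation vanishes and $\tilde\iota^*\tilde\kl\isom\tilde\kl$.

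I expect \refenum{i} to be the main obstacle: the real work lies in assembling the $-1$-action on the Jacobian together with an explicit invariant theta characteristic uniformly across all the connected, reducible and singular shapes of $\tilde C$, in particular in controlling the norm map on the generalised Jacobians of the nodal and cuspidal components.
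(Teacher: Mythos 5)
Your architecture is the same as the paper's: reduce everything to the regular ggs structure $\tilde\kl$ on the partial normalisation via Proposition \ref{prop: irregular ggs}; for \refenum{i} show that $\tilde\iota$ acts trivially on $\Pic^0[2]$ and then exhibit one invariant theta characteristic; parts \refenum{ii} and \refenum{iii} run essentially as in the paper (where the paper uses the $(-1)$-action on $H^1(\ko_{\tilde C})$ plus the exponential sequence, you use the norm map of the double cover onto a rational curve; both work). There is, however, a genuine gap in \refenum{i} in the case $p_a(D)=2$, i.e.\ $C=\tilde C$ irreducible and $\kl$ already a line bundle. Your invariant theta characteristic is $\ko_D(R)$ for a ramification point $R$ of $\varphi\colon C\to\IP^1$, which requires $R$ to be a \emph{smooth} point of $C$ (so that $R$ is Cartier and $\varphi^*(\varphi(R))=2R$). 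By Corollary \ref{cor: description Types} the branch divisor is $\sum m_ip_i$ with $\sum m_i=6$, and a ramification point over $p_i$ is smooth exactly when $m_i=1$; the case $f_{12}=\ell_1^3\ell_2^3$ (a rational curve with two $A_2$ cusps) has no smooth ramification point at all, and it is the unique irreducible configuration where this happens. Your construction therefore does not apply there. The repair is already inside your own framework: in that case $b_1(C)=0$, so $\Pic^0(C)[2]=0$, the torsor of theta characteristics is a single point and invariance is automatic --- which is exactly how the paper disposes of this case.

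A secondary weakness is the lift of $\iota$ to $\tilde C$: you justify it by saying $\iota$ ``permutes the singular points preserving their type and hence, by uniqueness, preserves the partial normalisation''. Permuting singular points of the same analytic type is not by itself enough --- if $\iota$ swapped two such points that $\tilde C$ normalises to different extents, no lift would exist (and $\iota^*\kl\not\isom\kl$ could fail). What saves the argument is that $\iota$ is the deck transformation of $\varphi\colon C\to Q$, every singular point of $C$ lies over a branch point and is therefore fixed by $\iota$, and the partial normalisations at an $A_m$ point form a totally ordered chain (Example \ref{example: A sing}), so $\iota$ preserves each local subring; the paper instead obtains $\tilde\iota$ directly as the deck involution of the degree-two morphism $\tilde C\to Q$. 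Finally, your parenthetical claim that $h^0(\tilde\kl)=1$ forces a connected component to have genus one is literally false for a connected genus-two component (odd theta characteristics exist), but this is harmless in context, since in the equality case of \refenum{ii} the curve $\tilde C$ is disconnected and its components have genus at most one.
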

\begin{proof}
 Let us write $\kl = f_* \tilde \kl$ for a regular ggs  structure $(\tilde \kl, \tilde \mu)$ on a partial normalisation $f\colon \tilde C \to C$. Recall that by Corollary  \ref{cor: description Types} the curve $\tilde C$ is also Gorenstein.
  The composition $f\circ \varphi: \tilde C \to Q$ is a degree 2 finite morphism, so it induces an involution 
 $\tilde \iota$ on $\tilde C$  that lifts $\iota$. 
 
 We first observe that $\iota$ acts trivially on $\Pic^0(\tilde C)[2]$. Indeed, considering the induced action on $H^1(\ko_{\tilde C}) \isom H^0(\tilde C, \omega_{\tilde C})^{\vee}$
   and taking into account that $\tilde C/\tilde \iota$ has arithmetic genus $0$ we see $\tilde \iota$ acts on $H^1(\tilde C, \ko_{\tilde C})$ as multiplication by $-1$.  Then the claim follows from the exponential sequence.
  It thus suffices to exhibit one regular  $\tilde \iota$-invariant ggs structure.

 By the criterion in Proposition \ref{prop: existence ggs} $(i)$ the curve $\tilde C$ admits a regular ggs structure if and only if it is irreducible or $\deg \omega_C|_{C_i}$ is even on both components.
 
  So if $C$ is of type B then $\tilde C$ is disconnected and  its connected components  are irreducible of  genus zero or one (cf.  Table \ref{tab: list of curves}), so we can take the trivial bundle on  components of genus one and the unique ggs structure on  components of genus zero and the resulting ggs structure is obviously invariant.

 Consider now $C$ of type A. By Corollary \ref{cor: description Types} if  $\tilde C$ is disconnected then it is the union of two smooth rational curves and we can take the unique ggs structure on each of them.
 If $\tilde C $ is connected but reducible, then it is   the union of  two smooth rational curves,  meeting either in two nodes or in an $A_3$ point, so it has genus 1 and we can take the trivial ggs structure
  $ \ko_{\tilde C}$   on $\tilde C$. 
 If $\tilde C\neq C$ is irreducible, then it has arithmetic genus at most one and admits either a trivial or a unique   regular ggs structure, as above.  
 
 So we are left with the case when $C=\tilde C$ is irreducible.
 If $\phi\colon C\to C/\iota = \IP^1$ admits a smooth branch point $p$, then the associated line bundle is a regular ggs structure, since $\omega_C = \phi^*\ko_{\IP^1}(1) = \ko_C(2p)$. The only remaining case is when $ \wt C$ has two points of type $A_2$.
 In this case, $b_1(C) = 0$ so by Proposition \ref{prop: existence ggs} there exists a unique ggs structure on $C$, which then has to be $\iota$-invariant.

 We move on to the second item.
 First observe that if  $C$ is a connected Gorenstein curve of genus $g\leq 2$, then for any regular  ggs structure $\kl$ on $C$ we have $h^0(C, \kl)\leq 1$, because $\deg \kl = g-1$.
 
  If $g = 2$ and $C$ is reducible, then by Proposition \ref{prop: existence ggs} and the geometry of the curves (Proposition \ref{prop: types}) any ggs structure is irregular, so we consider the corresponding regular ggs
  structure  $(\tilde \kl, \tilde \mu)$ on a partial normalisation $\tilde C$. If $\tilde C$ is connected, then  $h^0(\tilde \kl)\leq 1$ by the above. If $\tilde C$ is disconnected, then both components have genus at most one, so  $h^0(C, \kl)\leq 2$ with equality if and only if $C = C_1\cup C_2$ is of type $B$ and $\kl = \ko_{C_1} \oplus \ko_{C_2}$ is the pushforward of the unique ggs structure with a section on the components, because the unique regular ggs structure on $\IP^1$ does not have a section.
  
  $(iii)$ Follows from the fact that  the points where $\kl$ is invertible are those where $\tilde C\to C$ is an isomorphism  and from the fact that  $\mu$ is an isomorphism for regular ggs structures.
  \end{proof}

 We close this section with a technical result needed for the computations of Section~\ref{sec: half-can-rings}.
 \begin{lem}\label{lem: degL-B}
 Let $C$ be a reduced connected  Gorenstein curve of arithmetic genus two with $\omega_C$ ample and ggs structure $(\kl, \mu)$. Assume that $C$ is reducible and let $B\subset C$ be an irreducible component. Then $\deg(\kl\restr{B} ^{[1]})\ge -1$ and if equality holds then $p_a(B)=0$.
 \end{lem}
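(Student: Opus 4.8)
The plan is to reduce to a line bundle on a partial normalisation and then compute the degree directly. Since $C$ is reducible, Corollary \ref{cor: description Types} together with Example \ref{exam: genus 1 Gorenstein curves} shows that it has only singularities of type $A_m$, so Proposition \ref{prop: irregular ggs} applies: there are a partial normalisation $f\colon\tilde C\to C$ with $\tilde C$ Gorenstein and a \emph{regular} ggs structure $(\tilde\kl,\tilde\mu)$ with $\kl=f_*\tilde\kl$, and since $\tilde\mu$ is an isomorphism we have $\tilde\kl^{\tensor2}\isom\omega_{\tilde C}$. Let $\tilde B\subseteq\tilde C$ be the unique component mapping onto $B$ and $f_B\colon\tilde B\to B$ the induced finite birational morphism.

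The crucial step is the identification $\kl\restr{B}\refl{1}\isom(f_B)_*\bigl(\tilde\kl\restr{\tilde B}\bigr)$. Pushing the surjection $\tilde\kl\onto\tilde\kl\restr{\tilde B}$ forward along the exact functor $f_*$ produces a surjection $\kl=f_*\tilde\kl\onto(f_B)_*\bigl(\tilde\kl\restr{\tilde B}\bigr)$ of $\OO_C$-modules. Its target is a rank-one torsion-free $\OO_B$-module, so the map factors first through $\kl\restr{B}$ and then, as $B$ is Gorenstein, through $\kl\restr{B}\refl{1}=\kl\restr{B}/\text{torsion}$ by Lemma \ref{lem:reflexive}. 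The induced surjection $\kl\restr{B}\refl{1}\onto(f_B)_*\bigl(\tilde\kl\restr{\tilde B}\bigr)$ is generically an isomorphism, and its source is torsion-free of rank one on the irreducible curve $B$; hence its kernel is a torsion subsheaf of a torsion-free sheaf, so it vanishes and the map is an isomorphism.

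Now the estimate follows from bookkeeping with Euler characteristics. Writing $\delta_B=p_a(B)-p_a(\tilde B)$ for the genus drop along $f_B$, preservation of $\chi$ under the finite morphism $f_B$ gives $\deg\bigl(\kl\restr{B}\refl{1}\bigr)=\deg\bigl(\tilde\kl\restr{\tilde B}\bigr)+\delta_B$. Since $\tilde\kl\restr{\tilde B}$ is a line bundle with $\bigl(\tilde\kl\restr{\tilde B}\bigr)^{\tensor2}\isom\omega_{\tilde C}\restr{\tilde B}$, adjunction on the Gorenstein curve $\tilde C$ (Lemma \ref{lem: sequence-for-omega}) gives $\deg\bigl(\tilde\kl\restr{\tilde B}\bigr)=p_a(\tilde B)-1+e/2$, where $e\ge0$ is the length of $\tilde B\cap\overline{\tilde C\setminus\tilde B}$. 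Combining these with $\delta_B+p_a(\tilde B)=p_a(B)$ yields
\[\deg\bigl(\kl\restr{B}\refl{1}\bigr)=p_a(B)-1+\frac{e}{2}\ \ge\ p_a(B)-1\ \ge\ -1,\]
and equality forces $e=0$ and $p_a(B)=0$, which is exactly the claim.

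I expect the only point demanding real care to be the isomorphism of the second paragraph: $\kl\restr{B}$ genuinely acquires torsion at the points of $B$ lying over the nodes joining $B$ to the rest of $C$, so its degree cannot be read off naively. What keeps the argument clean, and lets me avoid any explicit local computation at the $A_m$ points, is exactly the interplay of exactness of $f_*$ with the torsion-freeness of $\kl\restr{B}\refl{1}$; everything after that is routine.
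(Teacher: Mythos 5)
Your proof is correct and follows essentially the same route as the paper's: reduce via Proposition \ref{prop: irregular ggs} to a regular ggs structure on a partial normalisation, identify $\kl\restr{B}\refl{1}$ with the pushforward of $\tilde\kl\restr{\tilde B}$, and compare Euler characteristics using $\tilde\kl^{\otimes 2}\isom\omega_{\tilde C}$ together with the inclusion $\omega_{\tilde B}\into\omega_{\tilde C}$. The only (harmless) difference is that you track the intersection length $e$ explicitly to obtain the closed formula $\deg(\kl\restr{B}\refl{1})=p_a(B)-1+e/2$, whereas the paper bounds $\deg(\tilde\kl\restr{\tilde B})\ge p_a(\tilde B)-1$ and finishes with a short case analysis.
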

 \begin{proof} By Proposition \ref{prop: irregular ggs} we have  $\kl = f_* \tilde \kl$ for a regular ggs structure $\tilde \kl$ on a partial normalisation $f\colon \tilde C \to C$. Denote by $\tilde B$ the component of $\tilde C$ that maps to $B$ and denote by $\ki\subset \ko_{\tilde C} $ the ideal of $\tilde B$. Since $f$ is finite, we have an exact sequence 
 \begin{equation}
 0\to f_* 	(\ki \tilde\kl) \to f_*\tilde \kl=\kl \to f_*(\tilde \kl \restr{\tilde B})\to 0
 \end{equation}
 The sheaf $f_*(\tilde \kl \restr{\tilde B})$ is torsion-free and supported on $B$,  so the last map in the above equation gives a surjective map $ \kl\restr{B}^{[1]}\to f_*(\tilde \kl \restr{\tilde B})$. Since both sheaves are torsion-free of rank one, the map is an isomorphism. 
 
 So we have $\chi( \kl\restr{B}^{[1]})=\chi({\tilde\kl\restr{\tilde B})=\deg\tilde \kl\restr{\tilde B}+1-p_a(\tilde B})$ and $2\deg (\tilde\kl\restr{\tilde B})=\deg K_{\tilde C}\restr{\tilde B}$. Since there is an inclusion $\omega_{\tilde B}\into \omega_{\tilde C}$ (cf. Lemma \ref{lem:  sequence-for-omega}),  we get $\deg\tilde \kl \restr{\tilde B}\ge p_a(\tilde B)-1 \ge -1$, and if $\deg\tilde \kl \restr{\tilde B}=-1$ holds  then  $\tilde B$ is smooth rational. 
   
 Assume $\deg \tilde \kl\restr{\tilde B}=-1$, hence $0=\chi( \tilde \kl\restr{\tilde B}) =\chi( \kl\restr{B}^{[1]})$: if $B$ is not smooth rational then $p_a(B)=1$ and $\deg  \kl\restr{B}^{[1]}=0$.  \end{proof}

 \begin{cor}\label{cor: degL+K_C}
 Let $C$ be a reduced connected Gorenstein curve of arithmetic genus two with $\omega_C$ ample and ggs structure $(\kl, \mu)$.  Then  we have
 $H^1(C, \kl \otimes \omega_C^{\otimes m})=0$ for every $m\geq 1$. 
 \end{cor}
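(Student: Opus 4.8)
The plan is to deduce the vanishing directly from Lemma \ref{vanishingH^1}, applied to the rank one torsion-free sheaf $\kf := \kl\otimes\omega_C^{\otimes m}$. Thus it suffices to verify the numerical hypothesis
\[
\deg\left( (\kl\otimes\omega_C^{\otimes m})\restr{B}\refl{1}\right) \ge 2p_a(B)-1
\]
for every subcurve $B\subseteq C$. Since $C$ is connected of genus two, the only subcurves to inspect are $B=C$ and, when $C$ is reducible, its two irreducible components $C_1,C_2$ (which have $p_a(C_i)\le 1$ by Proposition \ref{prop: types}).

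First I would simplify the left-hand side. Because $C$ is Gorenstein, $\omega_C$ is invertible, and tensoring with an invertible sheaf commutes both with restriction to $B$ and with taking the reflexive hull; hence
\[
(\kl\otimes\omega_C^{\otimes m})\restr{B}\refl{1}\cong \left(\kl\restr{B}\refl{1}\right)\otimes\left(\omega_C^{\otimes m}\restr{B}\right),
\]
and the degree splits as $\deg\left(\kl\restr{B}\refl{1}\right)+m\deg(\omega_C\restr{B})$. Recall from the proof of Proposition \ref{prop: types} that $\deg(\omega_C\restr{C_i})=1$ on each component, so that $\deg\omega_C=2$.

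For $B=C$ I would use that $\kl$ is reflexive (Lemma \ref{lem:reflexive}), so $\kl\refl{1}=\kl$, and that $\chi(\kl)=0$ forces $\deg\kl=\chi(\kl)-\chi(\OO_C)=p_a(C)-1=1$. Then the left-hand side equals $1+2m$, which is $\ge 3=2p_a(C)-1$ exactly when $m\ge1$. For $B=C_i$ a proper component I would invoke Lemma \ref{lem: degL-B}, which yields $\deg\left(\kl\restr{C_i}\refl{1}\right)\ge -1$ with equality only if $p_a(C_i)=0$. In the case $p_a(C_i)=0$ the required bound $2p_a(C_i)-1=-1$ follows from $-1+m\ge 0$; in the case $p_a(C_i)=1$ the lemma improves the estimate to $\deg\left(\kl\restr{C_i}\refl{1}\right)\ge 0$, and the required bound $2p_a(C_i)-1=1$ follows from $0+m\cdot 1\ge 1$. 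In every case $m\ge1$ suffices, and Lemma \ref{vanishingH^1} gives $H^1(C,\kl\otimes\omega_C^{\otimes m})=0$.

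Once Lemma \ref{lem: degL-B} is in hand the argument is essentially bookkeeping, so I do not expect a genuine obstacle; the only points demanding care are the verification that tensoring by the invertible sheaf $\omega_C^{\otimes m}$ really commutes with the reflexive hull $(-)\refl{1}$ on each subcurve, and the parity accounting in the borderline case $p_a(C_i)=1$, where the estimate is sharp and relies precisely on the refined conclusion of Lemma \ref{lem: degL-B}.
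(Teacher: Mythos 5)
Your proposal is correct and follows the same route as the paper: both reduce the statement to the numerical criterion of Lemma \ref{vanishingH^1}, handle $B=C$ by the degree computation $\deg(\kl\otimes\omega_C^{\otimes m})=2m+1\ge 3$, and handle the components of a reducible $C$ via Lemma \ref{lem: degL-B}, using its refined conclusion ($\deg(\kl\restr{B}\refl{1})\ge 0$ when $p_a(B)=1$) exactly where the bound is sharp. The only difference is cosmetic: you make explicit the compatibility of tensoring by the invertible sheaf $\omega_C^{\otimes m}$ with restriction and reflexive hull, which the paper uses tacitly.
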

\begin{proof}
We have  $\deg (\kl \otimes \omega_C^{\otimes m}) =2m+1\ge 3=2p_a(C)-1$, since $\omega_C$ is a line bundle of degree two. Therefore if $C$ is irreducible we can conclude by Lemma \ref{vanishingH^1}.   

 Assume now  that $C$ is reducible and let $B\subset C$ be an irreducible component, so that $\omega_C\restr B$ is a line bundle of degree one.
 Then by the above Lemma \ref{lem: degL-B}  we have $\deg((\kl \otimes \omega_C^{\otimes m})\restr{B}^{[1]})\ge m-1$  if $p_a(B)=0$ and $\geq m$ if $p_a(B)=1$.  Summing up,  we have $\deg (\kl \otimes \omega_C^{\otimes m})\restr{ B}^{[1]} \geq 2p_a(B) -1$ for every $B\subseteq C$, 
 hence   Lemma \ref{vanishingH^1} gives  $H^1( \kl \otimes \omega_C)=0$.
\end{proof}

\section{Half-canonical rings}\label{sec: half-can-rings}

\subsection{Definition and classification of half-canonical rings}

\begin{defin} Let $C$ be a projective and reduced Gorenstein curve and let 
 $( \kl, \mu)$ be a ggs structure on $C$.  Its  half-canonical ring  is 
 \[ R(C, \{\kl, \omega_C\}) = R(C, \kl) = \bigoplus_n R_n, \]
 where $R_{2n} = H^0(C, nK_C) $ and $R_{2n+1} = H^0(C, \kl(nK_C))$ and  the multiplication of two elements of odd degree is defined via $\mu$. 
\end{defin}
Now let $( \kl, \mu)$ be a ggs  structure on a  curve  $C$ with ample canonical bundle and arithmetic genus two. 
We classify the possible half-canonical rings $R(C,\{\kl,\omega_C\})$ according to the type of the curve defined in  Proposition \ref{prop: types}.

\begin{thm}\label{thm: all half canonical rings}
  Let $( \kl, \mu)$  be a ggs structure on a curve $C$ of genus two with ample canonical bundle. 
Then $h^0(\kl) = 0, 1, 2$ and  the half-canonical ring $R(C, \{\kl, \omega_C\})$ admits generators and relations as given in Table \ref{tab: half-canonical rings}. 

 Conversely, under the assumptions  on $f_6$, $g_6$, $g_8$, $f_{12}$ shown in  Table \ref{tab: half-canonical rings},    the  ideals of Table \ref{tab: half-canonical rings} define  reduced 
 Gorenstein curves  $C$ of genus two with $\omega_C = \ko_C(2)$ ample
 and $\kl = \ko_C(1)\refl{1}$  a ggs structure on $C$. 
\end{thm}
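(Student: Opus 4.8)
The plan is to split $R(C,\{\kl,\omega_C\})$ into its even and odd parts. The even part $\bigoplus_n R_{2n}=R(C,\omega_C)$ is the canonical ring and is already described in Proposition \ref{prop: types}, so it is of Type A or Type B; the odd part $\bigoplus_n R_{2n+1}=\bigoplus_n H^0(\kl(nK_C))$ is a module over it, and the product of two odd elements lands in the even part via $\mu$. First I would record all dimensions. Since $\deg\kl=p_a(C)-1=1$, Riemann--Roch together with the vanishing $H^1(\kl(nK_C))=0$ for $n\ge1$ from Corollary \ref{cor: degL+K_C} gives $\dim R_{2n+1}=2n$ for $n\ge1$, while $\dim R_1=h^0(\kl)\in\{0,1,2\}$ by Proposition \ref{prop: ggs on genus 2}, the value $2$ occurring only in Type B. This yields the three columns of the table and the full Hilbert series in each of the five cases.

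Next I would fix generators and relations degree by degree, the essential tool being the surjectivity criterion of Proposition \ref{prop: surjectivity}. In Type A the pencil $|K_C|$ is base point free, so multiplication by $H^0(K_C)$ surjects onto $H^0(\kl(nK_C))$ for $n\ge 2$ by Corollary \ref{cor: degL+K_C}; in Type B the canonical pencil has the node as a base point, so I would instead use the base point free system $|2K_C|$ and multiply by $H^0(2K_C)$. Combined with a direct inspection of the first graded pieces --- where $z_1,z_2$ span $R_3=H^0(\kl(K_C))$ and, when $h^0(\kl)>0$, there are degree-one generators $z\in R_1$ whose square $z^2$ lies in $H^0(K_C)$ --- this determines the generators, and counting products then yields the relations. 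In the cases $A(0)$, $A(1)$ and $B(2)$ the numbers of generators and relations match a complete intersection, which I would confirm by checking that the candidate complete-intersection ring has exactly the Hilbert series computed above.

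The main obstacle is the two reducible cases $B(0)$ and $B(1)$. There the relation $y_1y_2=0$ together with the failure of $|K_C|$ to be base point free obstructs the surjection onto $R_5$ and forces additional odd generators in degree five, producing a non--complete-intersection presentation of codimension six and four respectively. To handle these I would compute the module of odd sections by restricting to the two components $C_1,C_2$ and tracking sections across the node, using Lemma \ref{lem: degL-B} to bound $\deg(\kl\restr{C_i}\refl{1})$; this gives the generators and the lowest relations. To assemble them into a presentation I would write down the minimal free resolution of $\ko_C$: since $C$ is Gorenstein with $\omega_C\isom\ko_C(2)$ the resolution is self-dual, and this symmetry constrains the syzygies enough to pin down the codimension-six and codimension-four formats. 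Matching the numerical data of the resolution with the Hilbert series then closes the forward direction.

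For the converse I would start from each ideal in the table and verify the geometry directly, the conditions on $f_6,g_6,g_8,f_{12}$ being exactly the nondegeneracy hypotheses needed to make the scheme reduced of dimension one with only the $A_m$ singularities recorded in Section \ref{section: description curves}. In the complete-intersection cases Gorensteinness is automatic and $\omega_C=\ke xt^c(\ko_C,\omega_\IP)=\ko_C(2)$ follows from the Koszul resolution; then $p_a(C)=2$ and $\chi(\kl)=0$ come from the Hilbert series, and $\kl=\ko_C(1)\refl{1}$ is a ggs structure because $\ko_C(1)\tensor\ko_C(1)\to\ko_C(2)=\omega_C$ is an isomorphism wherever $\ko_C(1)$ is invertible, in particular at the generic point of each component. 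In the codimension-six and codimension-four cases I would instead use the explicit self-dual resolution of the prescribed format to conclude that $\ko_C$ is Cohen--Macaulay, hence Gorenstein, with $\omega_C=\ko_C(2)$. The recurring difficulty, in both directions, is precisely to establish Gorensteinness and to identify $\omega_C$ in these non--complete-intersection formats, where the Koszul complex is not available.
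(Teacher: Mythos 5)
Your outline for types $A(0)$, $A(1)$ and $B(2)$ — even/odd splitting, Hilbert series via Corollary \ref{cor: degL+K_C} and Riemann--Roch, generators via Proposition \ref{prop: surjectivity} — matches the paper's argument. The genuine gap is in your treatment of $B(0)$ and $B(1)$, which is where the real work lies. For the forward direction you propose to find the odd generators by restricting to the components and then to ``pin down'' the codimension-six and codimension-four presentations from the self-duality of the minimal free resolution of $\ko_C$. This does not work: there is no structure theorem for Gorenstein ideals of codimension $\geq 4$ (Buchsbaum--Eisenbud stops at codimension $3$), so self-duality of the resolution does not determine the syzygies, nor even the number of relations in each degree. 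The paper instead linearises $\kl$ with respect to the bicanonical involution (Lemma \ref{lem: linearisation B}) and uses the resulting eigenspace decomposition of every graded piece $R_{2n+1}$ into summands supported over $Q_1$ and $Q_2$ (Lemma \ref{lem: B decomposition}); this is what produces explicit bases in each degree, forces each product of generators into a one- or two-dimensional eigenspace, and yields the individual relations ($w^2=yv$, $wz=yu$, $z^2=yg_8$, the $2\times2$ minors, etc.) after suitable coordinate changes. Completeness of the relation set is then verified by comparing the Hilbert series of the candidate quotient ring with that of $R$ — a finite computation, not a consequence of resolution symmetry. Your proposal omits the linearisation step entirely, and without it the component-restriction bookkeeping does not by itself isolate the relations.

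The converse has the same problem: you write that the ``explicit self-dual resolution of the prescribed format'' shows $\ko_C$ is ``Cohen--Macaulay, hence Gorenstein''. Cohen--Macaulay does not imply Gorenstein, and you have not exhibited the resolution; the $2\times2$ minors of the specialised symmetric $4\times4$ matrix (with two entries equal to zero and one equal to $g_8$) need not resolve as in the generic determinantal case. The paper avoids this by decomposing $C=C_1\cup C_2$ along $\{y_i=0\}$, verifying by an Eagon--Northcott/computer-algebra computation on each $3\times3$ symmetric block that each $C_i$ is an irreducible Gorenstein curve of arithmetic genus one, and checking that they meet transversally in a single node, after which $\omega_C\cong\ko_{\IP}(2)\restr{C}$ follows from $\Pic(C)\cong\Pic(C_1)\times\Pic(C_2)$. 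Either you supply the explicit resolutions in these two non-complete-intersection formats and prove their exactness, or you need the componentwise argument; as written the plan does not close.
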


\begin{table}
\caption{Half-canonical rings in terms of generators and relations}
\label{tab: half-canonical rings}
 \centering
\begin{tabular}{cccc}
 \toprule
 $h^0(\kl)$ &Type & \begin{minipage}[b]{2cm} variables \\ \& degrees\end{minipage} &  relations\\
 \midrule
 0 & $A(0)$  & $\mat{y_1 & y_2 & z_1 & z_2\\ 2 & 2& 3& 3}$ & $z_1^2 - f_{6}(y_1, y_2),\,z_2^2-g_6(y_1, y_2)$\\
 & & & where $f_6 \neq 0$, $g_6 \neq 0 $
 \\
 \cmidrule{2-4}
 &$B(0)$ & $\mat{y_1 & y_2 & z_1 & z_2 & v & t_1 & t_2 & u\\ 2 & 2& 3& 3 & 4 &  5& 5 & 6}$ 
 &
 \begin{minipage}{.4\textwidth}
 \[\rk
  \begin{pmatrix}
y_1 & 0 & z_1 & t_1 \\
0 & y_2 & z_2 & t_2 \\
z_1 & z_2 & v & u \\
t_1 & t_2 & u & g_8
\end{pmatrix}
\leq 1
\]
where $g_8=g_8(y_1,y_2,v)$ and $v^2$ appears in $g_8$ with non-zero coefficient. 
 \end{minipage}
\\
\midrule
1 &$A(1)$
& 
$\mat{x & y & z\\ 1 & 2 & 5}$ 
& 
$z^2 - f_{10}(x,y)$
\\ 
& & & where $f_{10}\neq 0$\\ 
\cmidrule{2-4}
& $B(1)$
&
$\mat{x& y& w& v& z& u\\ 1& 2& 3& 4& 5& 6}$ 
& 
\begin{minipage}{.4\textwidth}
\[\rk \begin{pmatrix}0&y&w&z\\x&w&v&u\end{pmatrix}\leq 1,\] 
\[
\begin{array}{rcl}
z^2 & = & yg_8(y,v) \\
zu & = & wg_8(y,v) \\
u^2 & = & vg_8(y,v) + x^4h_8(x,v)
\end{array}
\]

where $v^2$ appears in $g_8$ with non-zero coefficient. 
\end{minipage}

\\
\midrule
2&$B(2)$
&
$\mat{ x_1& x_2 & v & u \\ 1&1& 4& 6}$
&
\begin{minipage}{.4\textwidth}
\[x_1x_2,\ \  u^2 - f_{12}(x_1, x_2, v)\]
where $v^3$ appears in $f_{12}$ with non-zero coefficient.
\end{minipage}
\\
\bottomrule
 \end{tabular}

\end{table}

\begin{rem}
We have the following geometric interpretations of the type A rings:
\begin{enumerate}
\item If $h^0(\kl)=1$ then $C$ is a double cover of $\IP^1$ with coordinates $x^2, y$ with ramification divisor $R$ consisting of the five points determined by $f_{10}$ together with the point $p$ defined by $x^2 = 0$.
 If $C$ is smooth at $p$, then $\kl=\ko_C(p)$. 
\item If $h^0(\kl)=0$ then $C$ is a double cover of $\IP^1$ with coordinates $y_1,y_2$ with ramification  divisor $R$ 
 consisting of the three points $p_1,p_2,p_3$ determined by $f_6$ and the three points $q_1,q_2,q_3$  determined by $g_6$. If $C$ is smooth, then $\kl=\ko(p_1+p_2-p_3)=\ko(q_1+q_2-q_3)$. 
\end{enumerate}
In both cases, singularities of $C$ correspond to multiple points of the branch locus.
\end{rem}
\begin{rem}The type B curves $C=C_1\cup C_2$ are double covers of the reducible conic $Q_1\cup Q_2$ branched on the point $Q_1\cap Q_2$   and on three more  points  on each component, cf.~Corollary~\ref{cor: bicanonical map}. The geometry of the half-canonical rings is as follows:
\begin{enumerate}
\item If $h^0(\kl)=0$ then one component is a double cover of $\IP^1$ with coordinates $y_1^2,v$ branched over the points $v=0$ and  $y_1=0$  and the two points determined by $g_8|_{y_2=0}$. 
The other component is a double cover of $\IP^1$ with coordinates $y_2^2,v$ branched on  the points $v=0$ and $y_2=0$and the two points determined by $g_8|_{y_1=0}$.
\item If $h^0(\kl)=1$ then one component is a double cover of $\IP^1$ with coordinates $x^4,v$ branched in  $[0,1]$  and  in the three points determined by $v^3+x^4h_8$. On the other component, the double cover of $\IP^1$ can be viewed in coordinates $y^2,v$, branched in the point $v=0$ and the two points determined by $g_8$.
\item If $h^0(\kl)=2$ then one component is a double cover of $\IP^1$ with coordinates $x_1^4,v$ branched   in  $[0,1]$   and   in the three points determined by $h_{12}|_{x_2=0}$. 
The other component is a double cover of $\IP^1$ with coordinates $x_2^4,v$ branched in in  $[0,1]$   and  the three points determined by $h_{12}|_{x_1=0}$.
\end{enumerate}
\end{rem}

We prove Theorem \ref{thm: all half canonical rings} case-by-case, starting from the description  of the canonical ring 
\[ R(C, \omega_C)  \subset R =  R(C, \{\kl, \omega_C\})\]
obtained in Proposition \ref{prop: types}
and, for type B,  exploiting the involution $\iota$ on $C$ induced by the bicanonical map.

\subsection{First observations}

We begin by showing that the Hilbert series of $R$ is determined by the value  of $h^0(\kl)$. 
 \begin{prop}\label{prop: extended  involution} 
 Let $( \kl, \mu)$  be a ggs  structure on a curve of genus two with ample canonical bundle. 
Then the Hilbert series of the half-canonical ring $R$ is 
 \begin{center}
  \begin{tabular}{cc}
    $h^0(C, \kl)$& Hilbert series of $R$\\
   \midrule 
   0 &  $\frac{(1-t^6)(1-t^6)}{(1-t^2)(1-t^2)(1-t^3)(1-t^3)}$\\ 
    1&  $\frac{1-t^{10}}{(1-t)(1-t^2)(1-t^5)}$\\
   2 &  $\frac{(1-t^2)(1-t^{12})}{(1-t)(1-t)(1-t^4)(1-t^6)}$ \\
\bottomrule
\end{tabular}
 \end{center}
 \end{prop}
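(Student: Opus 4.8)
The plan is to split $R=\bigoplus_n R_n$ into its even and odd graded pieces and compute the two contributions to the Hilbert series $P_R(t)=\sum_n(\dim R_n)\,t^n$ separately. The even part $\bigoplus_n R_{2n}=\bigoplus_n H^0(nK_C)$ is precisely the canonical ring $R(C,\omega_C)$; its generating function was already computed in the proof of Proposition \ref{prop: types} (with degrees doubled in anticipation of exactly this application), so this contribution is
\[
\sum_{n\ge0}\dim H^0(nK_C)\,t^{2n}=\frac{1+t^6}{(1-t^2)^2}.
\]
Crucially, this even part does not depend on $\kl$ at all, so the three cases of the statement can differ only through the odd-degree pieces.

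Next I would handle the odd part $R_{2n+1}=H^0(\kl\otimes\omega_C^{\otimes n})$ by Riemann--Roch. Since $\chi(\kl)=0$ by definition of a ggs structure and $\chi(\ko_C)=1-p_a(C)=-1$, the degree convention gives $\deg\kl=\chi(\kl)-\chi(\ko_C)=1$; as $\omega_C$ is invertible of degree $2p_a(C)-2=2$ and degree is additive against invertible twists, one gets $\deg(\kl\otimes\omega_C^{\otimes n})=1+2n$ and hence $\chi(\kl\otimes\omega_C^{\otimes n})=2n$. For $n\ge1$ the vanishing $H^1(\kl\otimes\omega_C^{\otimes n})=0$ is supplied by Corollary \ref{cor: degL+K_C}, so $\dim R_{2n+1}=2n$; for $n=0$ the dimension is $a:=h^0(\kl)$, which is the only term not forced by Riemann--Roch and vanishing. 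Thus the odd contribution is
\[
a\,t+\sum_{n\ge1}2n\,t^{2n+1}=a\,t+\frac{2t^3}{(1-t^2)^2}.
\]

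Adding the two parts yields the single compact formula
\[
P_R(t)=\frac{1+2t^3+t^6}{(1-t^2)^2}+a\,t=\frac{(1+t^3)^2}{(1-t^2)^2}+a\,t,
\]
from which the whole proposition follows by substituting the three admissible values $a=h^0(\kl)\in\{0,1,2\}$ furnished by Proposition \ref{prop: ggs on genus 2} and rewriting each rational function in the form listed in the table. This last step is a routine algebraic rearrangement using the factorisations $1-t^6=(1-t^3)(1+t^3)=(1-t^2)(1+t^2+t^4)$, $1-t^2=(1-t)(1+t)$, $1-t^4=(1-t^2)(1+t^2)$ and $1-t^{12}=(1-t^6)(1+t^6)$. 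There is no genuine obstacle here: the only non-formal inputs are the degree bookkeeping for the torsion-free sheaf $\kl$ and the $H^1$-vanishing on possibly reducible curves, and the latter — the one place where the reducible case could have caused trouble — has already been settled in Corollary \ref{cor: degL+K_C}. The mild care needed is simply to keep the doubled grading straight and to remember that $a$ alone distinguishes the three series.
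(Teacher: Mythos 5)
Your proposal is correct and follows essentially the same route as the paper: the even part is read off from the canonical ring computed in Proposition \ref{prop: types}, and the odd part is obtained from Riemann--Roch together with the vanishing $H^1(\kl\otimes\omega_C^{\otimes m})=0$ of Corollary \ref{cor: degL+K_C}, leaving $h^0(\kl)$ as the only free parameter. Your compact closed form $\frac{(1+t^3)^2}{(1-t^2)^2}+a\,t$ and the explicit verification against the three tabulated series are a slightly more detailed write-up of the same argument, and the algebra checks out in all three cases.
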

\begin{proof} 
 The dimension of $R_2=H^0(K_C)$ is two  and for $m>1$ we know  $\dim R_{2m}=h^0(C,mK_C) = 2m-1$  from Proposition \ref{prop: types}, so we only need to determine the dimension of $R_{2m+1}=H^0(\kl(mK_C))$.

 The dimension of $R_1=H^0(\kl)$ is fixed by our assumption. 
 Concerning the other odd degrees, by Corollary \ref{cor: degL+K_C} we have   $H^1(C, \kl(mK_C))=0$, whence by  Riemann--Roch we obtain $h^0(C, \kl(mK_C))= 2m $ for every $m\geq 1$.

    Putting this all together as in the proof of Proposition \ref{prop: types} gives the stated Hilbert series for the ring $R$. \end{proof}

The next result is used to bound our computations of half-canonical rings.
\begin{lem}\label{lem: generated in low degree}
\begin{enumerate}
\item If $( \kl, \mu)$  is a ggs structure on a curve of Type A then $R(C,\{\kl,\omega_C\})$ is generated in degrees $\le 6$.
\item If $( \kl, \mu)$  is a ggs structure on a curve of Type B then $R(C,\{\kl,\omega_C\})$ is generated in degrees $\le 9$.
\end{enumerate}
\end{lem}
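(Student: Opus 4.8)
The plan is to exploit that, by Proposition \ref{prop: types}, the even part $R^+ = R(C,\omega_C)$ is a subring generated in degrees $\le 6$ (by $y_1,y_2,u$ in Type A and by $y_1,y_2,v,u$ in Type B), and then to control the odd pieces $R_{2m+1} = H^0(C,\kl(mK_C))$. Since the product of a section of $nK_C$ with a section of $\kl(mK_C)$ lands in $\kl((m+n)K_C)$ via the ordinary $\ko_C$-module multiplication, without ever invoking $\mu$, I can study these products through the maps $H^0(\omega_C^{\otimes j}) \otimes H^0(\kl(mK_C)) \to H^0(\kl((m+j)K_C))$ and apply the base-point free pencil trick of Proposition \ref{prop: surjectivity}$(i)$ with $\kf = \kl(mK_C)$ (rank one torsion-free) and $\kh = \omega_C^{\otimes j}$ (invertible). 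Its $H^1$-hypothesis is supplied by Corollary \ref{cor: degL+K_C}.

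For Type A the canonical pencil $|K_C|$ is base-point free: the structure in Proposition \ref{prop: types} exhibits $C$ as a finite double cover of $\IP^1 = \Proj\IC[y_1,y_2]$ (a point with $y_1=y_2=0$ would force $u^2 = f_{12}(0,0)=0$), so $y_1,y_2$ have no common zero. Taking $\kh = \omega_C$ and $\kf = \kl((m-1)K_C)$ in Proposition \ref{prop: surjectivity}$(i)$, the required vanishing is $H^1(C,\kl((m-2)K_C)) = 0$, which holds for $m \ge 3$ by Corollary \ref{cor: degL+K_C}. Hence $R_2 \otimes R_{2m-1} \onto R_{2m+1}$ for all $m \ge 3$, so every odd piece of degree $\ge 7$ comes from the degree-$2$ part times lower odd pieces. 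Only $R_1,R_3,R_5$ can yield new odd generators, all in degree $\le 5$; together with the even generators in degrees $\le 6$ this proves $(i)$.

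For Type B the pencil $|K_C|$ has a base point at the node $C_1\cap C_2$, so I instead use the base-point free system $|2K_C|$ produced in Proposition \ref{prop: types}. Setting $\kh = \omega_C^{\otimes 2}$ and $\kf = \kl((m-2)K_C)$, the hypothesis of Proposition \ref{prop: surjectivity}$(i)$ becomes $H^1(C,\kl((m-4)K_C)) = 0$, valid for $m \ge 5$ by Corollary \ref{cor: degL+K_C}. Thus $R_4 \otimes R_{2m-3} \onto R_{2m+1}$ for $m \ge 5$, reducing every odd piece of degree $\ge 11$ to the degree-$4$ part times lower odd pieces. The remaining possible odd generators lie in degrees $1,3,5,7,9$, all $\le 9$, which with the even generators in degrees $\le 6$ gives $(ii)$.

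The main obstacle is precisely this loss in Type B: because $|K_C|$ is not base-point free one cannot step the odd degrees down by $2$ and must use $|2K_C|$, stepping down by $4$, which is what forces the weaker bound $9$ instead of $6$. A secondary point to check carefully is that the multiplication maps in play are the ordinary module products (so that $\kf$ is torsion-free rank one and $\kh$ invertible, as Proposition \ref{prop: surjectivity} requires, with $\mu$ irrelevant here), and that the inductive descent really terminates at the finitely many low odd degrees listed.
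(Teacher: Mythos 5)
Your proposal is correct and follows essentially the same route as the paper: both parts use Proposition \ref{prop: surjectivity}$(i)$ with $\kh=\omega_C$ (Type A) respectively $\kh=\omega_C^{\otimes 2}$ (Type B) acting on the torsion-free sheaves $\kl(mK_C)$, with the $H^1$-vanishing supplied by Corollary \ref{cor: degL+K_C}, and the even part handled by Proposition \ref{prop: types}. Your explicit remark that only the $\ko_C$-module multiplication (and never $\mu$) is involved is a welcome clarification that the paper leaves implicit.
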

\begin{proof}
$(i)$ By Proposition \ref{prop: types},  if $C$ is of Type A, then $|K_C|$ is base point free.  Hence   by Proposition \ref{prop: surjectivity} and 
 Corollary \ref{cor: degL+K_C} the following multiplication maps 
 $$H^0(K_C)\tensor H^0(mK_C) \to H^0((m+1)K_C) = R_{2m+2}, $$ $$ \ H^0(K_C)\tensor H^0\left(\kl ((m-1)K_C) \right) \to H^0(\kl(mK_C)) = R_{2m+1}$$   
 are surjective for  $m\ge 3$. Hence $R$ is generated in degrees $\le 6$. 

$(ii)$ Since the linear system
  $|2K_C|$ is base point free and since for $m\geq 3$  Corollary~\ref{cor: degL+K_C} gives  $H^1(C,\kl((m-2)K_C))=0$,
  Proposition \ref{prop: surjectivity} applies to  $\kl(mK_C)$  and $2K_C$ and so the multiplication map 
 $$H^0(2K_C)\tensor H^0(\kl(mK_C)) \to H^0(\kl((m+2)K_C)) = R_{2m+5}$$
 is surjective for all $m\ge3$. It follows that $R$ is generated in degrees $\leq 10$. Since the canonical ring is generated in degrees $\le 6$ (Proposition \ref{prop: types}), we also know that there are no new generators in degree $10$.
\end{proof}

\subsection{Type A}

Let $C$ be a curve of type A, i.e.,  by Corollary  \ref{cor: description Types},
$C$ is irreducible or $C = C_1\cup C_2$ with $p_a(C_i)=0$.
We say that a  ggs structure $( \kl, \mu)$  is of type $A(i)$ if  $C $ is of type $A$ and $h^0(C,\kl)=i$. 
 By Proposition \ref{prop: ggs on genus 2} we have $i =0$ or $i = 1$. 

We are now going to prove cases $A(1)$ and $A(0)$ of Theorem  \ref{thm: all half canonical rings}.

\subsubsection{Type $A(1)$}
Here we prove that if $C$ is of type $A(1)$ then $R(C,\{\kl,\omega_C\})$ is isomorphic to $\IC[x,y,z]/(z^2 - f_{10})$, with $\deg(x,y,z)=(1,2,5)$.
\begin{proof}
Choose a non-zero section $x\in R_1 = H^0(\kl)$. Since $C$ is reduced we have $x^2 \neq 0$ and thus we can choose the coordinates in the canonical ring such that $x^2 = y_1$ and let $y : = y_2$. 
We start by noting the following facts:
\begin{itemize}
\item[(a)] Multiplication by $x$ gives an injective map $R(C,\{\kl,\omega_C\})\to R(C,\{\kl,\omega_C\})$. Indeed, we have seen in the proof of 
Proposition \ref{prop: types} that a curve of type A  either is irreducible, or it is the union of two smooth rational curves $C_1$ and $C_2$ and the restriction map $H^0(\omega_C)\to H^0(\omega_C\restr{C_i})=H^0(\OO_{C_i}(1))$ is an isomorphism for $i=1,2$. So $x^2$, and a fortiori $x$, does not vanish at the generic point of any component of $C$. 

\item[(b)] The map $j\colon \IC[x, y]\to R(C,\{\kl,\omega_C\})$ is injective. Let $v$ be a nonzero  element of $\ker j$: since $j$ is a map of graded rings, we may assume that $v$ is homogeneous. Then $\deg v$ is odd, since the map  $\IC[x^2,y]\to R(C,\omega_C)$  is an inclusion by Proposition \ref{prop: types} and we can write $v=xv_1$ with $v_1$  homogeneous of even degree. By (a) $v_1$ is in the kernel of  $\IC[x^2,y]\to R(C,\omega_C)$, a contradiction.
\end{itemize}
In view of  fact (b) above, comparing dimensions we see that $\IC[x,y]_{\leq 4} = R_{\leq 4}$ and that
\[ R_5 = \langle x^5, x^3y, xy^2, z\rangle\]
for some section $z$. 
The graded piece $R_6$ contains $xz$ and the four monomials of degree 6 in $x,y$. Assume by contradiction that there is a  relation among these elements, so that $xf_5(x,y,z)=\lambda y^3$ for some $\lambda\in \IC$. If $\lambda=0$, then $f_5=0$ by fact (a) and the linear relation is trivial. If $\lambda\ne 0$, then squaring the relation we obtain that $x^2$ divides $y^6$, but this is impossible since $y$ and $x^2$ have no common zero on $C$.
So we see that we have no new generators in degree $d=6$,  hence by Lemma \ref{lem: generated in low degree}, we have found all generators of $R$.

 Let $u\in R_6$ be as in  Proposition \ref{prop: types}.  We can write $u=xw+\lambda y^3$ with $w\in R_5$ and $\lambda\in \IC$. Since $u \notin \IC[x,y]$ as in the proof of Proposition \ref{prop: types} we have  $w= \alpha z+ p(x, y)$, with $\alpha\in \IC^*$, so up to changing $z$ we may assume $u=xz+\lambda  y^3$.

Now we study the relations. By Proposition \ref{prop: types},  
we may write $z^2 = f_{10}(x^2,y) + u f_4(x^2,y)\in R_{10}$. Combining this with the relation $u^2-f_{12}(x^2,y)=0$ in the canonical ring we have
\[ x^2 f_{10}(x^2, y) + ux^2 f_4(x^2, y) = x^ 2 z^2 = \lambda^2 y^6- 2\lambda  y^3u + u^2 = \lambda^2 y^6- 2\lambda u y^3+  f_{12}(x^2, y).\]
A  basis of   $R_{12}$ is $x^{12}, x^{10}y, \dots, y^6, x^6u, x^4yu, x^2y^2u, y^3 u$, so 
comparing  the coefficients of the monomials involving $u$  we see that $\lambda = 0 $, $f_4=0$,  $xz = u$ and    
we have the claimed relation $z^2 - f_{10} =0$.
Comparing the Hilbert series (cf. Proposition \ref{prop: extended involution}), we see that we have  found the full ring.

\end{proof}

\subsubsection{ Type $A(0)$}
Now we prove that if $C$ is of type $A(0)$ then \[R\isom \IC[y_1,y_2,z_1,z_2]/(z_1^2 - f_{6},z_2^2-g_6),\] with $\deg(y_1,y_2,z_1,z_2)=(2,2,3,3)$.

\begin{proof}
Since $R_1=H^0(\kl)=0$, we start with the basis $y_1,y_2$ of $R_2=H^0(\omega_C)$ coming from the canonical ring of $C$. 
We read off $\dim R_3=h^0(\kl(K_C))=2$ from the Hilbert series of $R$ (cf. Proposition \ref{prop: extended involution}) and choose a basis $z_1,z_2$. Since $\IC[y_1,y_2]$ is a subring of $R$, we have $\IC[y_1,y_2,z_1,z_2]_{\leq 4}=R_{\leq 4}$. Since $h^0(\kl)=h^1(\kl)=0$ and $|K_C|$ is a free pencil, the multiplication  $R_2\otimes R_3\to R_5$ is a bijection by Proposition \ref{prop: surjectivity},  so 
\[R_5=\langle y_1z_1,y_2z_1,y_1z_2, y_2z_2\rangle.\]
From the canonical ring, $R_6$ is generated by cubic polynomials  in $y_1,y_2$ and by  $u$, where $u$ is as in Proposition \ref{prop: types}. Hence by Lemma \ref{lem: generated in low degree} we have found all generators.

 Thus the elements of $\Sym^2(z_1,z_2)$ can be expressed in terms of $y_1,y_2,u$, namely there are three relations of  the  form
\begin{equation*}
z_1^2 =\alpha u +a_6(y_1,y_2),\quad
z_1z_2= \beta u +b_6(y_1,y_2), \quad
z_2^2 = \gamma u +c_6(y_1,y_2)
\end{equation*}
Comparing the Hilbert series (cf. Proposition \ref{prop: extended involution}) we see that there are no additional relations
 and we can embedd $C$ into $\IP(2,2,3,3,6)$.  Assume that $\alpha=\beta=\gamma=0$: then $C$ contains the point $q=[0,0,0,0,1]$ and for a general choice
  of $a_6,b_6,c_6$ the tangent cone to $C$ at this point has six irreducible components
 since  the 3 equations give $6$ points in $\pp(2,2,3,3)$ and the curve is the cone over these 6 points with vertex $q$, 
  so $C$ does not have an $A_n$ singularity at $q$, contradicting Corollary \ref{cor: description Types}. Therefore at least one of $\alpha, \beta, \gamma$ is nonzero and we can use one of the relations above to eliminate $u$. Up to a linear change of the coordinates $z_1,z_2$  the remaining relations can be written either as:
\begin{equation}\label{eq: f6g6}
z_1^2 =f_6(y_1,y_2),\quad
z_2^2 = g_6(y_1,y_2)
\end{equation}
or
\begin{equation*}
z_1^2 =f_6(y_1,y_2),\quad
z_1z_2= h_6(y_1,y_2). 
\end{equation*}
In the latter case, $C$ contains the point $[0,0,0,1]$ and again  one can show that the corresponding singularity is not of type $A_n$. So the relations are as in \eqref{eq: f6g6}. 
\end{proof}

\begin{rem}\label{rem: linearisation A}
If $( \kl, \mu)$  is a ggs structure on a curve $C$ of type $A$ then it is possible to extend the action  of the bicanonical involution $\iota$ on the canonical ring (see Corollary \ref{cor: bicanonical map}), to the half-canonical ring  $R(C,\{\kl,\omega_C\})$. In case $A(1)$ we take 
$$(x,y,z)\mapsto (x,y,-z)$$ and in case $A(0)$ we take
\[(y_1,y_2,z_1,z_2)\mapsto (y_1,y_2,z_1,-z_2).\]
This amounts to choosing a linearisation of $\kl$ compatible with the one chosen for $\omega_C$ (cf. Remark \ref{rem: linearisation}). 
 Note that these are not the only possible choices: in the former case, we can also consider $(x,y,z)\mapsto (-x,y,-z)$ and in the latter case $(y_1,y_2,z_1,z_2)\mapsto (y_1,y_2,-z_1,z_2)$.\end{rem}

\subsection{Type  B: talking 'bout an involution}
Now let $C$ be a canonical curve of type B, i.e., 
$C = C_1\cup C_2$ is a reducible and reduced  Gorenstein curve of arithmetic genus two such that the two components have arithmetic genus one and intersect in a node $p$ (see Corollary~ \ref{cor: description Types}). By Corollary~\ref{cor: bicanonical map}, the bicanonical map $\varphi\colon C\to Q=Q_1 \cup Q_2$ has degree $2$ with $Q_i\isom \IP^1$, and the bicanonical involution $\iota$ is regular. 
 We say  a  ggs structure $( \kl, \mu)$  is of type $B(i)$ if  $C$ is of type $B$ and $h^0(C,\kl)=i$. 
 By Proposition \ref{prop: ggs on genus 2} we have $i =0, 1, 2$. 
 
 Recall that we have chosen a linearisation on $\omega_C$ with respect to the bicanonical  involution $\iota$ (Remark \ref{rem: linearisation}). In case $A$ we have determined all the possible half-canonical rings and a posteriori we have checked that the action of $\iota$ on the canonical ring extends to the half canonical rings. In other words, we have shown  that every ggs structure $( \kl, \mu)$  on a curve of type $A$  is linearisable and that the map $\mu\colon \kl\otimes \kl \to\omega_C$ is equivariant with respect to the fixed linearization of $\omega_C$. In case $B$ we proceed in a different order: we first show the existence 
 of a linearisation such that the map $\mu\colon \kl\otimes \kl \to\omega_C$ is equivariant and then we use the induced action of $\iota$ on $R(C,\{\kl,\omega_C\})$ to analyse the ring structure.
 
 \begin{lem}\label{lem: linearisation B}  Let $C$ be a Gorenstein curve of genus two with $\omega_C$ ample of type $B$ and let $( \kl, \mu)$ be a ggs structure on $C$.
 Then: 
 \begin{enumerate} 
 \item $\kl$ can be linearised with respect to the bicanonical involution $\iota$ of $C$
 \item  for any choice of linearisation of $\kl$   the map $\mu\colon \kl \otimes \kl \to \omega_C$ is equivariant with respect to the linearisation  of $\omega_C$ fixed in Remark \ref{rem: linearisation}.
 \end{enumerate}
 \end{lem}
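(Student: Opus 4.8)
The plan is to read (i) off the general theory, and to prove (ii) by encoding the possible failure of equivariance as a single involution on a two-dimensional $\Hom$-space which I then evaluate one component at a time. For (i): since $\langle\iota\rangle\isom\IZ/2$ is cyclic, the discussion of linearisations in Section~\ref{notation} says that $\kl$ admits an $\iota$-linearisation as soon as $\iota^*\kl\isom\kl$, and this is precisely Proposition~\ref{prop: ggs on genus 2}(i). Concretely, given any isomorphism $\lambda\colon\iota^*\kl\to\kl$, the composite $\lambda\circ\iota^*\lambda$ is a constant automorphism of $\kl$, so $\lambda$ can be rescaled to satisfy the cocycle condition $\lambda\circ\iota^*\lambda=\id$, which is exactly the datum of a $\IZ/2$-linearisation.

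For (ii), fix such a $\lambda\colon\iota^*\kl\to\kl$ and let $\omega_\lambda\colon\iota^*\omega_C\to\omega_C$ be the fixed linearisation of $\omega_C$ from Remark~\ref{rem: linearisation}. Since the canonical surjection $\kl\tensor\kl\to\kl\refl{2}$ is natural and hence $\iota$-equivariant for every linearisation, it suffices to treat the induced map $\bar\mu\colon\kl\refl{2}\to\omega_C$, viewed as an element of $V:=\Hom(\kl\refl{2},\omega_C)$. On $V$ I introduce the transport operator
\[ T(\psi)=\omega_\lambda\circ(\iota^*\psi)\circ(\lambda\tensor\lambda)^{-1}, \]
so that equivariance of $\bar\mu$ is the single assertion $T(\bar\mu)=\bar\mu$. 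Two formal facts run the argument. First, the cocycle conditions give $\iota^*(\lambda\tensor\lambda)=(\lambda\tensor\lambda)^{-1}$ and $\omega_\lambda\circ\iota^*\omega_\lambda=\id$, from which a short computation yields $T^2=\id$; thus $T$ is an involution of $V$. Second, any other linearisation of $\kl$ differs from $\lambda$ by a sign on each component, i.e.\ by an element of $\{\pm1\}^2\subset\Aut(\kl)$ (here $\Aut(\kl)=(\IC^*)^2$ by Lemma~\ref{lem: pushforward} applied to the two connected pieces of the disconnected normalisation of a Type~B curve), and this leaves $\lambda\tensor\lambda$, hence $T$, unchanged. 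This already gives the words ``for any choice'' in the statement: equivariance is an intrinsic property of $\bar\mu$, independent of the chosen linearisation.

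It remains to show the $T$-eigenvalue of $\bar\mu$ is $+1$. Writing $\kl=f_*\tilde\kl$ for the regular ggs structure on the partial normalisation $f\colon\tilde C\to C$ (Proposition~\ref{prop: irregular ggs}), relative duality for the finite Gorenstein morphism $f$ identifies $V$ with $\Hom(\tilde\kl^{\tensor2},\omega_{\tilde C})$; as $\tilde C=\tilde C_1\sqcup\tilde C_2$ is disconnected and each $\tilde\mu_i\colon\tilde\kl_i^{\tensor2}\to\omega_{\tilde C_i}$ is an isomorphism, $V\isom\IC^2$, one factor per component. Since $\iota$ preserves each $C_i$ (Corollary~\ref{cor: description Types}), $T$ respects this splitting, so $\bar\mu$ is a $T$-eigenvector with eigenvalue $c_i\in\{\pm1\}$ on each $\tilde C_i$, and I must exclude $c_i=-1$. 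The key point is that $\tilde\lambda_i^{\tensor2}$, the square of \emph{any} linearisation of $\tilde\kl_i$, is a canonical linearisation of $\omega_{\tilde C_i}$ (the sign ambiguity is killed by squaring), and I compare it with the restriction of the fixed linearisation, which by Remark~\ref{rem: linearisation} is the opposite of the natural one. On a genus-one component this comparison is read off the $\iota$-action on the one-dimensional $H^0(\omega_{\tilde C_i})$, where both linearisations act by $+1$; on a genus-zero component, where $H^0(\omega)=0$, I would instead compare the two fibre characters at a fixed point of the induced involution on $\IP^1$. (When $h^0(\kl)=2$ there is a shortcut: $\mu$ sends $x_i\tensor x_i$ to the nonzero invariant $x_i^2\in H^0(\omega_C)$, which forces $c_i=+1$ directly; but this does not cover $B(0)$ and $B(1)$.)

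The main obstacle is this last step. The formal part — building $T$, checking $T^2=\id$, and its independence of the linearisation — is routine. The real work is identifying the sign $c_i$: one must verify that under relative duality and normalisation the fixed (opposite-of-natural) linearisation of $\omega_C$ transports to the opposite-of-natural linearisation of each $\omega_{\tilde C_i}$, and then evaluate the sign on the genus-zero components, where the comparison cannot be tested on global sections of $\omega$ and must be carried out locally at a fixed point of $\tilde\iota$.
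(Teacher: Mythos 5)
Your architecture is sound and runs parallel to the paper's: part (i) from $\iota^*\kl\isom\kl$ plus rescaling to the cocycle condition, the observation that a second linearisation differs by a sign on each component and hence leaves $\lambda\tensor\lambda$ unchanged (which is how both you and the paper dispose of ``for any choice''), and the reduction via relative duality to a sign $c_i\in\{\pm1\}$ on each connected component $\tilde C_i$ of the disconnected partial normalisation. But the proof stops exactly where the content lies, and you say so yourself: the signs $c_i$ are never computed. Your plan only works where the comparison can be tested on global sections, i.e.\ on a genus-one component with $h^0(\tilde\kl_i)=1$ (the $B(2)$ shortcut). On a genus-one component with $h^0(\tilde\kl_i)=0$ --- a nontrivial $2$-torsion $\tilde\kl_i$ on an elliptic component, which does occur in types $B(0)$ and $B(1)$ --- there is no section of $\tilde\kl_i$ to square, so your assertion that the ggs-induced linearisation acts by $+1$ on $H^0(\omega_{\tilde C_i})$ is unproved. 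On genus-zero components you explicitly defer to an unexecuted local computation, and the equivariance of the relative-duality identification $f_*\omega_{\tilde C}\to\omega_C$ with respect to the natural linearisations is likewise flagged but not checked. These are not formalities: they are the entire content of part (ii).

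The paper closes this gap with a uniform local argument that works for every component regardless of its genus and of $h^0(\tilde\kl_i)$, and which you should substitute for your case analysis. By \cite[Lemma 4.4]{FFP22} one can choose a $\tilde\iota$-invariant trivialising cover $\{U_j\}$ of $\tilde\kl_i$ and, after refining, local generators $\tau_j$ with $\tilde\iota^*\tau_j=\pm\tau_j$. Then $\alpha_j:=\tilde\mu(\tau_j\tensor\tau_j)$ are local generators of $\omega_{\tilde C_i}$ satisfying $\tilde\iota^*\alpha_j=\alpha_j$, so the ggs-induced linearisation exhibits $\omega_{\tilde C_i}$ as $\phi_i^*\km_i$ with the pullback linearisation for some line bundle $\km_i$ on $Q_i\isom\IP^1$. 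Since $\phi_i$ is flat, ${\phi_i}_*\omega_{\tilde C_i}=\omega_{Q_i}\oplus\omega_{Q_i}(a)$ with $a=p_a(\tilde C_i)+1$, and the degree relation $\deg\omega_{\tilde C_i}=2\deg\km_i$ forces $\km_i=\omega_{Q_i}(a)$, which is precisely the anti-invariant summand of the natural linearisation. Hence the ggs-induced linearisation is the opposite of the natural one on every component, which is the identity $c_i=+1$ your write-up is missing.
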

\begin{proof}
 By Proposition \ref{prop: irregular ggs} there are a partial normalisation $f\colon \tilde C\to C$ and a  regular ggs  structure  $(\tilde \kl, \tilde \mu)$  on $\tilde C$ such that $\kl=f_*\tilde \kl $. 
 
$(i)$ In the proof of Proposition \ref{prop: ggs on genus 2}, $(i)$ we have seen that $\iota$ induces an involution $\tilde \iota$ of $\tilde C$ such that  $\tilde\iota^*\tilde \kl\cong \tilde\kl$. Since $\tilde \kl$ is a line bundle, this implies that $\tilde\kl$ admits a linearisation. Pushing forward to $C$ we obtain a linearisation of $\kl$.  

$(ii)$  
Linearisations  of $\kl$, $\tilde \kl$ with respect to $\iota$, $\tilde \iota$ can be seen to be  in bijection arguing as in the proof of Lemma \ref{lem: pushforward}, $(ii)$.
By relative duality there is a  $\ko_C$-module  homomorphism $f_*\omega_{\tilde C}\to \omega_C$ which is invariant with respect to the natural linearisations of these sheaves  with respect to   $\tilde{\iota}$ and $\iota$ (cf. Remark \ref{rem: linearisation}). 				 
 So it is enough to show that for any given linearisation of $\tilde\kl$,  the isomorphism $\tilde\kl \otimes \tilde\kl\to \omega_{\tilde C}$ induces on $\omega_{\tilde C}$ the opposite of the natural linearisation. 
  
  By \cite[Lemma 4.4]{FFP22} there is a trivializing open cover  $\{ U_j\}$  for $\tilde \kl$ on   $\tilde C$ such that the  $U_j$ are $\tilde \iota$-invariant. Let $\sigma_j$ be a local generator of $\tilde\kl \restr{U_j}$ and let $q\in U_j$ 
   be a point: then at least one between $\sigma_j+{\tilde\iota}^*\sigma_j$ and $\sigma_j-{\tilde\iota}^*\sigma_j$ does not vanish at $q$. 
  So, up to refining the cover, we may assume in addition that for every $j$  the sheaf $\tilde\kl \restr{U_j}$ is  generated by an element $\tau_j$ such that ${\tilde\iota}^*\tau_j=\pm \tau_j$.
  Via the isomorphism $\tilde\kl\tensor \tilde \kl\to \omega_{\tilde C}$ we obtain a local trivialization of $\omega_{\tilde C}$ such that  for every $j$ there is a local generator $\alpha_j$ on $U_j$  that  verifies ${\tilde \iota}^*\alpha_j=\alpha_j$. In other words, $\omega_{\tilde C}$ is the pull back of a line bundle $\tilde \km$ on  $\tilde C/\tilde \iota$ and the linearisation induced by the ggs structure coincides with the linearisation as the  pullback of $\tilde \km$.
  By Proposition \ref{prop: existence ggs} $\kl$ is not regular at the intersection point of $C_1$ and $C_2$, hence $\tilde C=\tilde C_1\sqcup \tilde C_2$ is a disjoint union of curves of arithmetic genus 0 or 1.  
   The bicanonical involution $\iota$ induces an  involution $\tilde\iota$ of $\tilde C$ that preserves the two components and  such that $Q_i:=\tilde C_i/\tilde \iota$ is smooth rational for $i=1,2$. 
   So the quotient map $\phi_i\colon \tilde C_i\to Q_i$ is flat and ${\phi_i}_*\ko_{\tilde C_i}=\ko_{Q_i}\oplus \ko_{Q_i}(-a)$, where $a=p_a(C_i)+1$. 
   Relative duality gives ${\phi_i}_*\omega_{\tilde C_i}=\omega_{Q_i}\oplus\omega_{Q_i}(a)$. Since  $\deg \omega_{\tilde C_i}=2\deg \km\restr{Q_i}$, we conclude that $\km\restr{Q_i}=\omega_{Q_i}(a)$,
    namely the linearisation induced on $\omega_{C_i}$ by the isomorphism $\tilde\kl\tensor \tilde \kl\to \omega_{\tilde C}$ is  the opposite of the natural one. 
     \end{proof}

\begin{lem}\label{lem: B decomposition}
Let $( \kl, \mu)$  be a ggs structure  of   type $B(i)$ with
$i=h^0(\kl)=a_1+a_2$ where $(a_1,a_2)=(0,0)$, $(0,1)$ or $(1,1)$.  
Then, if we use the linearisation of $\omega_C$ chosen in Remark \ref{rem: linearisation}:
\begin{enumerate}

\item there is a unique linearisation of $\kl$ such that  both $R_3=H^0(\kl(\omega_C))$ and $R_1=H^0(\kl)$ are $\iota$-invariant; 
\item for the above choice of linearisation we have the following splittings  under the action of $\iota$:

\end{enumerate}
  
\begin{description}[font=\normalfont]
\item[If $n>0$ is odd]
\begin{align*}R_{2n+1}^+ &\isom H^0\left(\ko_{Q_1}{\left( \tfrac {n-1}2 \right)}\right) \oplus H^0\left(\ko_{Q_2}{\left( \tfrac {n-1}2 \right)}\right) \\
 R_{2n+1}^-&\isom H^0\left(\ko_{Q_1}{\left( \tfrac {n-1}2 -1\right)}\right) \oplus H^0\left(\ko_{Q_2}{\left( \tfrac {n-1}2 -1\right)}\right) 
\end{align*}

\item[If $n\ge 0$ is even]

\begin{align*} R_{2n+1}^+&\isom H^0\left(\ko_{Q_1}{\left( \tfrac n2 -1+a_1\right)}\right) \oplus H^0\left(\ko_{Q_2}{\left( \tfrac {n}2-1+a_2 \right)}\right)\\
 R_{2n+1}^-&\isom H^0\left(\ko_{Q_1}{\left( \tfrac n2 -1-a_1\right)}\right) \oplus H^0\left(\ko_{Q_2}{\left( \tfrac n2 -1-a_2\right)}\right) 
 \end{align*}
 \end{description}

\end{lem}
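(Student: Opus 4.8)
The plan is to transport the whole question to the normalisation $f\colon\tilde C=\tilde C_1\sqcup\tilde C_2\to C$ supplied by Proposition \ref{prop: irregular ggs}, on which $\kl=f_*\tilde\kl$ with $\tilde\kl_i:=\tilde\kl|_{\tilde C_i}$ a line bundle, and to treat each component through the flat degree-two quotient $\phi_i\colon\tilde C_i\to Q_i=\tilde C_i/\tilde\iota$ of Lemma \ref{lem: linearisation B}. Setting $M_i:=(f^*\omega_C)|_{\tilde C_i}$, the projection formula yields
\[ R_{2n+1}=H^0(C,\kl\otimes\omega_C^{\otimes n})=\bigoplus_{i=1,2}H^0\bigl(\tilde C_i,\tilde\kl_i\otimes M_i^{\otimes n}\bigr), \]
compatibly with $\iota$ (which $\tilde\iota$ lifts), so it is enough to decompose each summand under $\tilde\iota$ by pushing forward to $Q_i\cong\IP^1$: one has $\phi_{i*}(\tilde\kl_i\otimes M_i^{\otimes n})=\ke^+_n\oplus\ke^-_n$ with eigensheaves $\ke^\pm_n=\ko_{Q_i}(d^\pm_n)$ and $H^0(\tilde\kl_i\otimes M_i^{\otimes n})^\pm=H^0(Q_i,\ke^\pm_n)$. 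The structural input that makes this tractable is that $M_i^{\otimes2}=\phi_i^*\ko_{Q_i}(1)$ carries the natural invariant linearisation: indeed $\omega_C^{\otimes2}=\varphi^*\ko_Q(1)$ and, for the linearisation fixed in Remark \ref{rem: linearisation}, the whole of $H^0(\omega_C^{\otimes2})$ is $\iota$-invariant. Twisting by $M_i^{\otimes2}$ is therefore twisting by an invariant pullback and preserves the $\pm$-splitting, so writing $n=2k$ or $n=2k+1$ reduces everything to the base cases $n=0,1$, via $\ke^\pm_n=\ke^\pm_{r}\otimes\ko_{Q_i}(k)$ with $r\in\{0,1\}$.

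For part $(i)$ I would argue as follows. A linearisation exists by Lemma \ref{lem: linearisation B}$(i)$, and any two differ by an element of $\Aut(\kl)=\Aut(\tilde\kl)=(\IC^*)^2$; the condition of being an involution restricts this ambiguity to one sign $\epsilon_i\in\{\pm1\}$ per component, and flipping $\epsilon_i$ interchanges $\ke^+_n$ and $\ke^-_n$ on the $i$-th summand. Now each summand $H^0(\tilde C_i,\tilde\kl_i\otimes M_i)$ of $R_3$ is one-dimensional (degree one on a curve of genus $\le1$, with vanishing $H^1$ by Corollary \ref{cor: degL+K_C}), so exactly one sign $\epsilon_i$ makes it invariant; this pins both signs and produces the unique linearisation with $R_3$ invariant. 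It then remains to check that $R_1=\bigoplus_iH^0(\tilde\kl_i)$ is invariant for the same choice. This is vacuous on components with $a_i=0$; on a component with $a_i=1$ one has $p_a(\tilde C_i)=1=p_a(C_i)$, so $f$ is an isomorphism there, $\tilde\kl_i=\ko_{\tilde C_i}$ and $\tilde\kl_i\otimes M_i=M_i=\omega_C|_{C_i}$, whose sections are spanned by the restriction of the $\iota$-invariant canonical coordinate vanishing on the other component. Hence the sign selected by $R_3$ is the natural one, for which the constant section of $\ko_{\tilde C_i}$ is invariant, so $R_1$ is invariant too. I expect this compatibility to be the crux of the argument: since $R_1$ can be zero (case $B(0)$) it cannot by itself fix the linearisation, and one must verify that the normalisation imposed by $R_3$ agrees with the one $R_1$ would impose on the genus-one components.

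Finally, with the linearisation fixed, the two base cases fall out from Euler characteristics together with the vanishing of anti-invariant sections, since a line bundle on $\IP^1$ is its degree and $\ko_{Q_i}(d)$ has $h^0=\max(d+1,0)$. For $n=1$ one has $\deg\ke^+_1+\deg\ke^-_1=\chi(\tilde\kl_i\otimes M_i)-2=-1$, while $h^0(\ke^+_1)=1$ and $h^0(\ke^-_1)=0$ by the choice above, forcing $\ke^+_1=\ko_{Q_i}$ and $\ke^-_1=\ko_{Q_i}(-1)$. For $n=0$ one has $\deg\ke^+_0+\deg\ke^-_0=-2$ with $h^0(\ke^-_0)=0$ and $h^0(\ke^+_0)=h^0(\tilde\kl_i)=a_i$, forcing $\ke^+_0=\ko_{Q_i}(a_i-1)$ and $\ke^-_0=\ko_{Q_i}(-a_i-1)$. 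Twisting by $\phi_i^*\ko_{Q_i}(k)$ then gives $\ke^+_n=\ko_{Q_i}(k),\ \ke^-_n=\ko_{Q_i}(k-1)$ for odd $n=2k+1$, and $\ke^+_n=\ko_{Q_i}(k-1+a_i),\ \ke^-_n=\ko_{Q_i}(k-1-a_i)$ for even $n=2k$; taking $H^0$ and summing over $i=1,2$ reproduces the stated formulas for $R^\pm_{2n+1}$. As a sanity check I would confirm that $\dim R^+_{2n+1}+\dim R^-_{2n+1}=2n$, in agreement with the Hilbert series of Proposition \ref{prop: extended involution}.
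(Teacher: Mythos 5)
Your proof is correct and follows essentially the same route as the paper: push forward along the partial normalisation and then along the flat double covers $\phi_i\colon\tilde C_i\to Q_i$, apply the projection formula with $\phi_i^*\ko_{Q_i}(1)$ to reduce to the base cases $n=0,1$, and pin down the eigensheaf degrees from $\chi$ together with the $h^0$'s; your sign-ambiguity argument for uniqueness in (i) is just a slightly more explicit version of the paper's choice of invariant summand, and your $R_1$-compatibility check matches the paper's observation that $x^3$ invariant forces $x$ invariant. The only blemish is the parenthetical ``degree one'' for $\tilde\kl_i\otimes M_i$, whose degree is actually $p_a(\tilde C_i)\in\{0,1\}$ --- but $h^0=1$ holds in both cases, so the conclusion stands.
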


\begin{proof}

Since $\kl$ is not locally free
  at the intersection point $p$ of $C_1$ and $C_2$ (cf.~Proposition~\ref{prop: existence ggs}), by Proposition \ref{prop: irregular ggs} there is a partial normalisation $f\colon \tilde C_1\sqcup \tilde C_2\to C$ such that 
$\kl=f_*\tilde \kl$, where $\tilde \kl =\tilde \kl_1 \oplus \tilde \kl_2$ and $\tilde \kl_i$ is a line bundle supported on $\tilde C_i$. Recall that $\tilde C_i$ is Gorenstein, $p_a(\tilde C_i)=0$ or 1 and $\tilde \kl_i^{\otimes 2}=\omega_{\tilde C_i}$. We set $\kl_i:=f_*\tilde\kl_i$ and $a_i:=h^0(\kl_i)=h^0(\tilde \kl_i)$. We have $a_1+a_2=h^0(\kl)\le 2$ by  Proposition \ref{prop: ggs on genus 2}, so if we assume $a_1\le a_2$ the possibilities for the pair $(a_1,a_2)$ are $(0,0)$, $(0,1)$ or $(1,1)$.

As we have seen  in the proof of Proposition~\ref{prop: ggs on genus 2}, $\iota$ induces an involution $\tilde \iota$ on $\tilde C_1\sqcup \tilde C_2$ preserving the two components and with $\tilde C_i/\tilde \iota= Q_i \isom \IP^1$. 
If $\varphi_i \colon \tilde C_i \to Q_i$ is the quotient map, i.e., $\varphi_i$ is the restriction to $\tilde C_i$ of the composition $\varphi\circ f\colon \tilde C_1\sqcup \tilde C_2\to Q_1\cup Q_2$, 
then we have
\begin{align*}
\omega_C^{\otimes n} \otimes \kl =\omega_C^{\otimes n} \otimes (\kl_1 \oplus \kl_2)=(\omega_C^{\otimes n} \otimes \kl_1) \oplus (\omega_C^{\otimes n}\oplus \kl_2)=\\
(\omega_C^{\otimes n}\restr{C_1} \otimes \kl_1)  \oplus 
(\omega_C^{\otimes n}\restr{C_2} \otimes \kl_2) 
=\kl_1\otimes \ko_{C_1}(np) 
\oplus
\kl_2\otimes \ko_{C_2}(np).
\end{align*}
Therefore
\[
R_{2n+1}=H^0(\omega_C^{\otimes n} \otimes \kl)\isom H^0 \left( {\varphi_1}_*{\tilde \kl_1(np_1)} \right)
\oplus
H^0 \left( {\varphi_2}_*{\tilde\kl_2(np_2)} \right).
\]
where $p_1,p_2$ are the preimages of the node $p=C_1 \cap C_2$,
In turn, each summand $H^0 \left( {\varphi_i}_*{\tilde\kl_i(np_i)} \right)$ decomposes into eigenspaces under the action of $\iota$.
Since $\ko_{C_i}(2p_i)=\phi_i^*\ko_{Q_i}(1)$, the projection formula gives 
\[ {\varphi_i}_*{\tilde\kl_i(np_i)}=\begin{cases}
                               ({\phi_i}_*\tilde\kl_i)\otimes \ko_{Q_i}(\tfrac n 2) & \text{ $n$ even,}\\
                                ({\phi_i}_*\tilde\kl_i(p_i))\otimes \ko_{Q_i}(\tfrac {n-1}2) & \text{ $n$  odd.}
                              \end{cases} 
\] In addition, $\ko_{C_i}(2p_i)$ has the pull-back linearisation, so it is enough to determine  the decomposition into eigensheaves of ${\phi_i}_*\tilde\kl_i$ and  ${\phi_i}_*\tilde\kl_i(p_i)$.

The line bundle $\tilde\kl_i(p_i)$ has degree $p_a(\tilde C_i)$. If $p_a(\tilde C_i)=0$, then  $\tilde\kl_i(p_i)=\ko_{\tilde C_i}$ and ${\phi_i}_*\tilde\kl_i(p_i)=\ko_{Q_i}\oplus \ko_{Q_i}(-1)$. If $p_a(\tilde C_i)=1$, then  $h^0(\tilde\kl_i(p_i))=1$ and $h^1(\tilde\kl_i(p_i))=0$, so that ${\phi_i}_*\tilde\kl_i(p_i)=\ko_{Q_i}\oplus \ko_{Q_i}(-1)$ also in this case. 
So for $i=1,2$ we choose the linearization of $\tilde\kl_i(p_i)$ in such a way that $\ko_{Q_i}$ is the invariant summand; this is the same as choosing a linearisation of $\kl \otimes \omega_C$ as claimed in $(i)$. Note that choosing a linearisation of  $\kl\otimes \omega_C$ is the same as choosing  a linearisation of $\kl$, because $\omega_C$ is invertible and has a chosen linearisation (cf. Remark \ref{rem: linearisation} and Lemma \ref{lem:  linearisation B}). Note also that if $H^0(\kl)$ contains a nonzero section $x$, then $x^3\in H^0(\kl\otimes \omega_C)$ is invariant, hence so is $x$ and claim (i) is proven.
Statement $(ii)$ for odd $n>0$ follows from the above remarks. 

We consider now  ${\varphi_i}_*\tilde\kl_i$: we have  $a_i=h^0(\tilde \kl_i)=h^1(\tilde\kl_i)$, so we have   \[{\varphi_i}_*\tilde\kl_i=\ko_{Q_i}(-1+a_i) \oplus \ko_{Q_i}(-1-a_i).\] If $a_i=0$, the two eigensheaves are isomorphic and claim $(ii)$ for even $n$ follows immediately.
If $a_i=1$, by $(i)$ we know that the invariant subsheaf is  the  first one and we get $(ii)$ also in this case. 
\end{proof}

We are now going to prove  cases $B(0),B(1)$ and $B(2)$ of Theorem  \ref{thm: all half canonical rings}.

\subsubsection{Type $B(1)$} \label{sect: B1}
Let $( \kl, \mu)$  be a ggs curve of type $B(1)$ so that $(a_1,a_2)=(0,1)$. Here we prove that $R(C, \{\kl, \omega_C\})$ is isomorphic to  $\IC[x, y, w, v, z, u]/I$
where the degrees are $\deg(x, y, w, v, z, u)=(1,2, 3,4,5,6)$ and  $I$ is generated by the following relations:
\[2\times 2\text{ minors of }\begin{pmatrix}0&y&w&z\\x&w&v&u\end{pmatrix}\  
\text{ and }
\begin{array}{rcl}
z^2 & = & yg_8(y,v) \\
zu & = & wg_8(y,v) \\
u^2 & = & vg_8(y,v) + x^4h_8(x,v)
\end{array}
\]
With respect to the  linearisation of Lemma \ref{lem: B decomposition} the involution $\iota$ acts on $R(C, \{\kl, \omega_C\})$ by $(x,y,w,v,z,u)\mapsto (x,y,w,v,-z,-u)$.
\bigskip

Let us denote the non-zero section  of $\kl$ by $x$.  By Lemma \ref{lem: B decomposition} the section $x$ is invariant  and supported on $C_2$. 

Since $C$ is reduced we have $x^2\neq 0$ so that by Proposition \ref{prop: types} and its proof we can write 
\begin{equation}\label{eq: can ring B1} 
R(C, K_C) \isom \IC[x^2, y,v,  u]/ ( 
x^2y, u^2 -f_{12}(x^2, y, v)) \supset R(C,K_C)^+ = \IC[x^2, y, v]/(x^2y). 
\end{equation}
Note that $xy=0$,  because $x$ and $y$ are supported on different components of $C$. 

Now we have to add the remaining elements of odd degree. By Lemma \ref{lem: B decomposition} $R_3$ is invariant, generated by $x^3$ and by an element $w$ supported on $C_1$.  Again, since $x$ and $w$ are supported on different components of $C$, we get $xw = 0$. 

Note that $R_4 = R_4^+ = \langle x^4, y^2, v\rangle$ and that $v$ is non-zero at the generic point of both components of $C$, in particular $v$ is not a zero-divisor in $R$. 

Let us now look in degree $5$, where we have the decompositions
\[R_5^+ \isom H^0({\ko_{Q_1}})\oplus H^0({\ko_{Q_2}}(1)) \text{ and } R_5^- \isom  H^0(\ko_{Q_1})\oplus H^0(\ko_{Q_2}(-1)).\]
The first summand of $R_5^+$ is $\langle yw\rangle$, the second is $\langle x^5 , xv\rangle$ and we have a new anti-invariant generator $z$ of degree $5$ supported on $C_1$, which spans $R_5^-$.

We need the following Lemma:
\begin{lem}\label{lem: B2 xs=ys =0}
  If $s, t \in R$ and $yt\neq 0$ then $s = 0$ if and only if $xs = ts =0$. 
 \end{lem}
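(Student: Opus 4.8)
The forward implication $s = 0 \Rightarrow xs = ts = 0$ is immediate, so the plan is to prove the converse: assuming $xs = ts = 0$ and $yt \neq 0$, I would deduce $s = 0$ by testing $s$ on one component at a time. Recall from Corollary \ref{cor: description Types} that $C = C_1 \cup C_2$ with both $C_i$ integral. First I would set up, for $i = 1, 2$, the restriction-of-sections homomorphism of graded rings $\rho_i \colon R \to R_i := R(C_i, \{\kl\restr{C_i}\refl{1}, \omega_C\restr{C_i}\})$, obtained by restricting a section to $C_i$ and killing torsion. The key point is that each $R_i$ is an integral domain: on the integral curve $C_i$ a section of a rank one torsion-free sheaf is zero as soon as it vanishes at the generic point $\eta_i$, and since $\mu$ is an isomorphism at $\eta_i$ (part of the definition of a ggs structure), the product of two elements nonzero at $\eta_i$ is again nonzero at $\eta_i$; comparing top-degree parts then shows $R_i$ has no zero-divisors.

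Next I would record the two facts about $x$ and $y$ already built into the $B(1)$ set-up: the section $x \in R_1$ is supported on $C_2$, so $\rho_1(x) = 0$ and $\rho_2(x) \neq 0$, while $y \in R_2$ is supported on $C_1$ (this is exactly why $xy = 0$), so $\rho_2(y) = 0$ and $\rho_1(y) \neq 0$. With these in hand the argument is a short chase. Applying $\rho_2$ to $xs = 0$ gives $\rho_2(x)\rho_2(s) = 0$ in the domain $R_2$, and $\rho_2(x) \neq 0$ forces $\rho_2(s) = 0$. From $yt \neq 0$ together with $\rho_2(yt) = \rho_2(y)\rho_2(t) = 0$ I conclude $\rho_1(yt) \neq 0$ --- otherwise $yt$ would vanish at both generic points and hence be $0$ by torsion-freeness --- so $\rho_1(y)\rho_1(t) \neq 0$ and therefore $\rho_1(t) \neq 0$. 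Applying $\rho_1$ to $ts = 0$ then gives $\rho_1(t)\rho_1(s) = 0$ in the domain $R_1$, whence $\rho_1(s) = 0$. Thus $s$ vanishes at the generic point of each component, and torsion-freeness of the sheaf of which $s$ is a section yields $s = 0$.

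The routine part is the chase itself; the one place I would be careful is the construction of $\rho_i$ and the claim that $R_i$ is a domain. Concretely, I must check that the multiplication of odd-degree elements of $R$, which is defined through $\mu$, restricts compatibly to the corresponding $\mu$-twisted multiplication on $R_i$, and that this restricted multiplication remains an isomorphism at $\eta_i$. Once that compatibility is verified, integrality of $C_1$ and $C_2$ together with torsion-freeness on $C$ make everything else formal.
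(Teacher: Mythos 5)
Your proof is correct and is essentially the paper's argument: the paper also reduces to checking $s$ at the generic point of each component via torsion-freeness, using that $x$ is non-zero at the generic point of $C_2$ and that $yt\neq 0$ forces $t$ to be non-zero at the generic point of $C_1$. You simply make the paper's two-sentence proof explicit by packaging the generic-point check into the restriction maps $\rho_i$ and the integrality of the component rings.
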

\begin{proof}
 We are dealing with sections of torsion-free sheaves, so it is enough to check this at the generic point of each component. But $C$ has two (reduced) components and either $x$ or $y$ is non-zero at the generic point of each of them. 
\end{proof}

{\em  End of  the proof of  Theorem  \ref{thm: all half canonical rings}: Type $B(1)$}.

 We have analysed the situation up to degree $5$ already, so we start in degree 
$6$. 
By \eqref{eq: can ring B1}, we have 
\[ R_6^+ = \langle x^2, y, v\rangle \text{ and } R_6^- = \langle u\rangle\]
   and  $u$ is an anti-invariant section that  does not vanish at 
the generic points of $C_1$ and $C_2$. Clearly  
 $w^2$ is an invariant section which is supported on $C_1$,   so we get a relation 
 \begin{equation}\label{eq: rel w squared}
   w^2= \alpha_1 y^3 + \alpha_2 yv
 \end{equation}
Moreover, $x$ and $z$ are supported on different components of $C$ so $xz=0$. We have accounted for all elements in $R_6$. 

Using Lemma \ref{lem: B decomposition}, we claim that $R_7$ decomposes as follows
 \[ R_7^+ = \langle wy^2, wv\rangle \oplus \langle x^7, x^3v \rangle \text{ and } 
R_7^-=\langle yz \rangle \oplus \langle xu\rangle,\]
 thus there are no new relations in degree $7$.  Indeed, 
 consider the first  summand of $R_7^+$ and assume that there is a linear relation $\alpha wy^2+\beta wv=0$. At the generic point of $C_1$ we can divide by $w$ to get $\alpha y^2+\beta v=0$ on $C_1$, namely $\alpha y^2+\beta v=\gamma x^4$ for some scalar $\gamma$. Since $y^2, x^4,v$ are a basis of $R_4$ we get $\alpha=\beta=\gamma=0$ and $wy^2,wv$ are independent.
We can argue in a similar way for  the second summand of $R_7$,  dividing  by $x^3$. The  two summands of $R_7^-$ are easily treated by noting that $xu\neq0$ and $yz\neq 0$.  In this way we have proved the independence of the generators and we can conclude by a dimension count. 
 
 We derive the following decomposition in degree $8$ from the canonical ring:
 \[ R_8 ^+ =  \langle x^8, x^4 v, v^2, y^4, y^2 v \rangle \text{ and } R_8^- = \langle x^2 u , y u \rangle. \]
 The element  $wz\in R_8^-$ is non-zero since both $w$ and $z$ are non-zero at the generic point of $C_1$,   hence up to rescaling $w$ we can write 
  \begin{equation}\label{eq: rel wz} 
 wz = yu.
   \end{equation}

We skip over degree $9$ for now. Again, from our knowledge of the canonical ring, we know that there are no new generators in degree $10$. The only element not accounted for so far is $z^2$, which is non-zero, invariant, and supported on $C_1$,   thus
 \begin{equation}\label{eq: rel z squared}
 z^2 = \mu_1 y^5+ \mu_2y^3v+\mu_3 yv^2.
   \end{equation}

We now analyse the interaction between the relations found so far in order to clean up some coefficients. 
Taking the square of \eqref{eq: rel wz} and substituting \eqref{eq: rel w squared}, \eqref{eq: rel z squared} and the relation $u^2=f_{12}$ in the canonical ring we get
\begin{equation}\label{eq: u z relation}
y^2f_{12}  = (yu)^2 = w^2z^2 = (\alpha_1 y^3 + \alpha_2 yv) (  \mu_1 y^5+ \mu_2y^3v+\mu_3 yv^2).
\end{equation}

Since $f_{12}$ contains the monomial $v^3$ with non-zero coefficient by Proposition \ref{prop: types}, we have $\alpha_2\mu_3\ne 0$.
 With the coordinate transformation $(\alpha_1 y^2 + \alpha_2 v)\mapsto v$
 the relation \eqref{eq: rel w squared} 
becomes 
 \begin{equation}\label{eq: rel w squared new}
   w^2= yv.
 \end{equation}
Substituting back into \eqref{eq: u z relation} we get
\[y^2f_{12}  =  yv (  \mu_1 y^5+ \mu_2y^3v+\mu_3 yv^2) = y^2 v g_8 (y,v)
\]
and therefore 
\[f_{12} = 
vg_8(y,v) + x^4h_8(x,v), \quad z^2 = yg_8(y,v).\]
Note that $y^6$ has been eliminated from the equation by our choice of $v$. 

Now multiplying \eqref{eq: rel wz} by $z$ and substituting \eqref{eq: rel z squared}, we get
\[y(uz) = w(z^2)=ywg_8\]
and since $uz$, $w$ and $y$ are supported on $C_1$, we  may divide by $y$ to obtain 
\[uz=wg_8.\]
 
 Note that by Lemma \ref{lem: generated in low degree}, we have now found all generators of $R$ with the possible exception of degree $9$, which we now check as promised. 
We claim that the decomposition of Lemma \ref{lem: B decomposition} is:
 \[  R_9^+= \langle wy^3, wyv\rangle \oplus
 \langle x^9, x^5v, xv^2 \rangle, \quad
 R_9^- = \langle y^2z, vz\rangle \oplus
 \langle x^3 u \rangle.\]
 We proceed just as we did in degree $7$. At the generic point of $C_1$, we can divide by $w$ to show that $wyv$ and $wy^3$ are linearly independent. Hence the first summand of $R_9^+$ is as claimed. For the second summand, we can divide by $x$ at the generic point of $C_2$.
 
 None of the given elements of  $R_9^-$ are zero by Lemma \ref{lem: B2 xs=ys =0}. Moreover, we may divide by $z$ to get linear independence of $y^2z, vz$ at the generic point of $C_1$. Thus $R_9$ decomposes as claimed.  
 
 Now $wu$ is also an element of $R_9$: it is non-zero, anti-invariant and supported on $C_1$, hence there is a final relation $wu=\lambda_1y^2z+\lambda_2vz$. Multiplying by $y$ and substituting \eqref{eq: rel wz} and \eqref{eq: rel w squared new} we get $yvz=ywu=y(\lambda_1y^2z+\lambda_2vz)=\lambda_1y^3z+\lambda_2yvz$ hence $\lambda_1=0$, $\lambda_2=1$ and the relation is
\[vz=wu.\]
 
Clearly, the given relations are contained in the kernel of the map, so it remains to show that there are no others. 
 To do this, we compute the Hilbert series of $\IC[x,y,w,v,z,u]/I$ (for example, using a computer algebra package), and check that it matches that of $R$. 
 
\subsubsection{Type $B(0)$} \label{sect: B0}
Let $( \kl, \mu)$  be a ggs  structure of type  $B(0)$,  so that $(a_1,a_2)=(0,0)$.
We will prove that $R\isom\IC[y_1,y_2,z_1,z_2,v,t_1,t_2,u]/I$ where the degrees are $\deg(y_1,y_2,z_1,z_2,v,t_1,t_2,u)=(2,2,3,3,4,5,5,6)$  and $I$ is generated by the $2\times2$ minors of the symmetric $4\times 4$ matrix
\[\begin{pmatrix}
y_1 & 0 & z_1 & t_1 \\
0 & y_2 & z_2 & t_2 \\
z_1 & z_2 & v & u \\
t_1 & t_2 & u & g_8
\end{pmatrix}\]
With respect to the  linearisation of Lemma \ref{lem: B decomposition} the involution $\iota$ acts on $R(C, \{\kl, \omega_C\})$ by $(y_1,y_2,z_1,z_2,v,t_1,t_2,u)\mapsto (y_1,y_2,z_1,z_2,v,-t_1,-t_2,-u)$.

Recall from Proposition~\ref{prop: types} and Corollary~\ref{cor: bicanonical map} that the canonical ring of $C$ and its invariant subring are
\[R(C, K_C) \isom \IC[y_1,y_2,v,u]/ ( y_1y_2, u^2 -f_{12}(y_1, y_2, v)) \supset R(C,K_C)^+ = \IC[y_1, y_2, v]/(y_1y_2).\]

We also need the following Lemma, whose proof is similar to that of Lemma \ref{lem: B2 xs=ys =0}.
\begin{lem}\label{lem: B2 y1s=y2s =0}
  If $s, t \in R$ and $y_1t\neq 0$ then $s = 0$ if and only if $y_2s = ts =0$. 
 \end{lem}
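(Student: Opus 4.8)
The plan is to imitate the proof of Lemma~\ref{lem: B2 xs=ys =0} almost verbatim, the only change being that the distinguished section $x\in H^0(\kl)$ used there is now replaced by the second invariant generator $y_2$. The mechanism is the same: the two sections appearing in the statement are each nonzero at exactly one of the two generic points of $C$, so multiplying by them detects vanishing on the corresponding component.

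First I would recall the support pattern of the generators. By the normalisation of the basis of $H^0(K_C)$ made in the proof of Proposition~\ref{prop: types}, the section $y_i$ vanishes identically on $C_i$; thus $y_1$ is nonzero at the generic point of $C_2$ only, and $y_2$ is nonzero at the generic point of $C_1$ only. Since $C=C_1\cup C_2$ has exactly two reduced components and every graded piece of $R$ consists of global sections of a rank-one torsion-free sheaf on $C$, an element of $R$ is zero if and only if it vanishes at the generic point of each of $C_1$ and $C_2$. This reduces the statement to a check at these two points, and the implication $(\Rightarrow)$ is trivial.

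For $(\Leftarrow)$, assume $y_2s=ts=0$. The hypothesis $y_1t\neq0$ is what localises $t$: because $y_1$ vanishes along $C_1$, the product $y_1t$ is automatically zero at the generic point of $C_1$, so $y_1t\neq0$ forces $t$ (and $y_1$) to be nonzero at the generic point of $C_2$. Now at the generic point of $C_1$ the section $y_2$ is nonzero, so $y_2s=0$ gives $s=0$ there; at the generic point of $C_2$ the section $t$ is nonzero, so $ts=0$ gives $s=0$ there. Hence $s$ vanishes at both generic points, and torsion-freeness yields $s=0$.

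I do not expect any genuine obstacle here --- the paper itself flags the argument as parallel to Lemma~\ref{lem: B2 xs=ys =0}. The only point needing care is bookkeeping of the support conventions, namely keeping straight which of $y_1,y_2$ is nonzero on which component, so that the hypothesis $y_1t\neq0$ is matched with the correct generic point when splitting into cases.
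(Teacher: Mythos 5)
Your proof is correct and follows essentially the same route as the paper, which simply notes that the argument of Lemma~\ref{lem: B2 xs=ys =0} carries over: one checks vanishing at the two generic points of $C=C_1\cup C_2$, using that $y_i$ vanishes on $C_i$ and that $y_1t\neq 0$ forces $t$ to be nonzero at the generic point of $C_2$. Your bookkeeping of which section is supported on which component is consistent with the conventions fixed in the proof of Proposition~\ref{prop: types}, so nothing further is needed.
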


\noindent{\em  End of the  proof of  Theorem  \ref{thm: all half canonical rings}: Type $B(0)$}.

Clearly $R_1=0$ and $R_2=R_2^+=\langle y_1,y_2 \rangle$.  The decomposition 
for $R_3$ given in Lemma \ref{lem: B decomposition} yields 
\[R_3^+ \isom H^0({\ko_{Q_1}})\oplus H^0(\ko_{Q_2}) \text{ and }  R_3^-\isom H^0({\ko_{Q_1}}(-1))\oplus H^0(\ko_{Q_2}(-1)),\]
so we choose generators $z_1$, $z_2$ for $R_3=R_3^+$. These are supported on different components of $C$, so $z_1z_2 = 0$. Similarly we get relations $y_1z_2=y_2z_1=0$.

Note that $R_4 = R_4^+ = \langle y_1^2, y_2^2, v\rangle$ and that $v$ is non-zero at the generic point of both components of $C$, in particular $v$ is not a zero-divisor in $R$. 

In degree $5$, we have 
\
\begin{align*}
R_5^+ & \isom H^0({\ko_{Q_1}})\oplus H^0({\ko_{Q_2}}) = \langle y_1z_1\rangle \oplus \langle y_2z_2\rangle\\
R_5^-&\isom H^0(\ko_{Q_1})\oplus H^0(\ko_{Q_2}) = \langle t_1 \rangle \oplus \langle t_2\rangle
\end{align*}

where the new generators $t_1,t_2$ of degree $5$ are anti-invariant because $y_iz_i$ is invariant for $i=1,2$. The $t_i$ are supported on different components of $C$, so we again have relations $t_1t_2=0$ and $t_iz_j=t_iy_j=0$ for $i\ne j$.

In degree $6$ we know from the canonical ring that 
\[R_6^+=\langle y_1^3,y_2^3,y_1v,y_2v\rangle \text{ and } R_6^-=\langle u\rangle.\]
Since $z_1^2$, $z_2^2$ are in $R_6^+$ and supported on $C_1$ respectively $C_2$, we have two relations
\begin{equation}\label{eq: rel zi squared}
z_1^2 = \alpha_1 y_1^3 + \alpha_2y_1v, \quad\quad
z_2^2 = \beta_1 y_2^3 + \beta_2y_2v.
\end{equation}

In the usual way, $R_7$ decomposes as
\begin{align*}
R_7^+ &\isom H^0(\ko_{Q_1}(1))\oplus H^0(\ko_{Q_2}(1))
=\langle y_1^2z_1, z_1v \rangle\oplus\langle y_2^2z_2, z_2v \rangle \\
R_7^- &\isom H^0(\ko_{Q_1})\oplus H^0(\ko_{Q_2}) =  \langle y_1t_1 \rangle \oplus \langle y_2t_2 \rangle
\end{align*}
Indeed $y_it_i\ne0$ and $y_1^2z_1, z_1v$ are linearly independent because we can divide by $z_1$ at the generic point of $C_1$ to get $y_1^2$, $v$ which are linearly independent on $C_1$.

From the involution on the canonical ring, we have
\[R_8^+=\langle y_1^4, y_2^4, y_1^2v, y_2^2v, v^2\rangle \ \text{ and }\ 
R_8^-=\langle y_1u, y_2u\rangle.\]
Now $z_it_i$ are non-zero elements of $R_8$ which are respectively supported on $C_i$ for $i=1,2$. Since $z_it_i$ is anti-invariant for $i=1,2$, we have relations
\begin{equation}\label{eq: rel ziti}
z_1t_1=y_1u, \quad\quad z_2t_2=y_2u
\end{equation}
where we have rescaled $y_1,y_2$ to tidy up the coefficients.

Next we consider degree $9$ where we have the decomposition
 \begin{align*}
R_9^+ &\isom H^0(\ko_{Q_1}(1)) \oplus H^0(\ko_{Q_2}(1))= \langle y_1^3z_1,y_1z_1v \rangle \oplus \langle y_2^3z_2,y_2z_2v\rangle,\\
R_9^- &\isom H^0(\ko_{Q_1}(1))\oplus H^0(\ko_{Q_2}(1))  = \langle y_1^2t_1,vt_1 \rangle \oplus \langle y_2^2t_2,vt_2\rangle.
\end{align*}
As usual, we can show that the listed monomials are linearly independent by dividing by some common factor at the generic point of $C_i$. 
Now for $i=1,2$ we know that $z_iu$ in $R_9$ is non-zero, anti-invariant and is supported on $C_i$. Hence $z_iu$ is in the summand $\langle y_i^2t_i,vt_i \rangle$. Thus we have relations
\begin{equation}\label{eq: rel ziu}
z_1u = \gamma_1y_1^2t_1 + \gamma_2vt_1, \quad\quad 
z_2u = \delta_1y_2^2t_2 + \delta_2vt_2
\end{equation}
Since there are no new generators in degree $9$, it follows from Lemma \ref{lem: generated in low degree}, that $R$ is generated in degrees $\le6$.

We have $t_1^2,t_2^2$ in $R_{10}^+$ each supported on $C_i$ for $i=1,2$. Thus from the canonical ring we get relations
\begin{equation}\label{eq: rel ti sq}
t_1^2 = \lambda_1y_1^5 + \lambda_2y_1^3v + \lambda_3y_1v^2, \quad\quad 
t_2^2 = \mu_1y_2^5 + \mu_2y_2^3v + \mu_3y_2v^2
\end{equation}

Now we study the interactions between relations and clean up several coefficients. Taking the squares of \eqref{eq: rel ziti} and substituting \eqref{eq: rel zi squared}, \eqref{eq: rel ti sq} and the relation $u^2=f_{12}$ from the canonical ring, we get the relations
\begin{equation}\label{eq: rel ui zi}
\begin{split}
y_1^2f_{12} &= y_1^2u^2 = z_1^2t_1^2 = (\alpha_1 y_1^3 + \alpha_2y_1v)
(\lambda_1y_1^5 + \lambda_2y_1^3v + \lambda_3y_1v^2) \\
y_2^2f_{12} &= y_2^2u^2 = z_2^2t_2^2 = (\beta_1 y_2^3 + \beta_2y_2v)
(\mu_1y_2^5+\mu_2y_2^3v+\mu_3y_2v^2)
\end{split}
\end{equation}
Since $f_{12}$ contains the monomial $v^3$ with non-zero coefficient it follows that $\alpha_2$ and $\beta_2$ must be non-zero. By a coordinate change of the form $v\mapsto v+\frac{\alpha_1}{\alpha_2}y_1^2+\frac{\beta_1}{\beta_2}y_2^2$, noting that $y_1y_2=0$ and by rescaling $z_1,z_2$, the equations \eqref{eq: rel zi squared} become 
\begin{equation}\label{eq: rel zi squared new}
z_1^2 = y_1v, \quad\quad z_2^2 = y_2v.
\end{equation}
Substituting back into \eqref{eq: rel ui zi} we get
\begin{equation}
\begin{split}
y_1^2f_{12} &= y_1v(\lambda_1y_1^5 + \lambda_2y_1^3v + \lambda_3y_1v^2)=y_1^2v(\lambda_1y_1^4 + \lambda_2y_1^2v + \lambda_3v^2) \\
y_2^2f_{12} &= y_2v(\mu_1y_2^5+\mu_2y_2^3v+\mu_3y_2v^2)= y_2^2v(\mu_1y_2^4+\mu_2y_2^2v+\mu_3v^2)
\end{split}
\end{equation}
Therefore
\[f_{12}(y_1,y_2,v)=vg_8(y_1,y_2,v),\quad\quad u^2=vg_8.\]

It follows immediately from the above definition of $g_8$ and the relation $y_1y_2=0$, that the equations
\eqref{eq: rel ti sq} become
\begin{equation}\label{eq: rel ti sq new}
t_1^2 = y_1g_8, \quad\quad 
t_2^2 = y_2g_8.
\end{equation}
Multiplying \eqref{eq: rel ti sq new} by $u$
and substituting \eqref{eq: rel ziti} gives
\begin{equation}
t_1^2u = y_1ug_8 = z_1t_1g_8, \quad\quad t_2^2u = y_2ug_8 = z_2t_2g_8
\end{equation}
and dividing by $t_i$ at the generic point of $C_j$ yields relations
\begin{equation}\label{eq: rel ti u}
t_1u = z_1g_8, \quad\quad t_2u  = z_2g_8.
\end{equation}
 
Multiplying \eqref{eq: rel ziu} by $y_i$ and substituting \eqref{eq: rel ziti} and \eqref{eq: rel zi squared new} we get
\begin{equation}
\begin{split}
y_1vt_1 &= z_1^2t_1 = y_1z_1u = \gamma_1y_1^3t_1 + \gamma_2y_1vt_1\\
y_2vt_2 &= z_2^2t_2 = y_2z_2u = \delta_1y_2^3t_2 + \delta_2y_2vt_2
\end{split}
\end{equation}
hence $\gamma_1=\delta_1=0$ and $\gamma_2=\delta_2=1$ and the relations are
\[z_1u=vt_1,\quad\quad z_2u=vt_2.\]

To show that there are no further relations, we computed the Hilbert series of 
$\IC[y_1,y_2,z_1,z_2,v,t_1,t_2,u]/(\text{relations})$ and checked that it matches with that of $R$.

\subsubsection{Type $B(2)$} \label{sect: B2}

Let $( \kl, \mu)$  be a ggs  structure of type  $B(2)$. We show that
$R \isom \IC[x_1,x_2,v,u]/
\left(x_1x_2 = 0, u^2 = f_{12}(x_1,x_2,v)\right)$. 
With respect to the  linearisation of Lemma \ref{lem: B decomposition} the involution $\iota$ acts on $R(C, \{\kl, \omega_C\})$ by $(x_1,x_2,v,u)\mapsto (x_1,x_2,v,-u)$.

\begin{proof}
We start with a basis for $R_1=\langle x_1, x_2 \rangle$, because $h^0(\kl)=2$. Since $C$ has arithmetic genus $2$, we see that $x_1^2,x_1x_2,x_2^2$ are not linearly independent in $R_2$. The curve $C$ is reduced so we may choose coordinates such that $x_1x_2=0$ and $R_2 = R_2^+ = \langle x_1^2, x_2^2 \rangle$ where $x_1^2=y_1$ and $x_2^2=y_2$. We read off the dimension of $R_n$ using the Hilbert series and continue with $R_3 = \langle x_1^3, x_2^3 \rangle$ and $R_4=R_4^+=\langle x_1^4, x_2^4, v \rangle$.

We tabulate the rest of the computation up to degree $9$, using the canonical ring and the decomposition of $R_{2n+1}$ from Lemma~\ref{lem: B decomposition}.
\begin{center}
\begin{tabular}{ccc}
 \toprule
 $d$ & $R_d^+$ & $R_d^-$ \\
 \midrule
$5$ & $\langle x_1^5,x_1v\rangle \oplus \langle x_2^5,x_2v\rangle$  \\
$6$ & $\langle x_1^6,x_1^2v\rangle \oplus \langle x_2^6,x_2^2v\rangle$ &
$\langle u\rangle$ \\
$7$ & $\langle x_1^7,x_1^3v\rangle \oplus \langle x_2^7,x_2^3v\rangle$ 
& $\langle x_1u\rangle \oplus \langle x_2u\rangle$ \\
$8$ & $\langle x_1^8,x_1^4v\rangle \oplus \langle x_2^8,x_2^4v\rangle \oplus \langle v^2\rangle$ &
$\langle x_1^2u\rangle\oplus \langle x_2^2u\rangle$ \\
$9$ & $\langle x_1^9,x_1^5v,x_1v^2\rangle \oplus \langle x_2^9,x_2^5v,x_2v^2\rangle$ &
$\langle x_1^3u\rangle \oplus \langle x_2^3u\rangle$\\
\bottomrule
\end{tabular}
\end{center}
 The direct summands group together basis elements that are supported on either one of the curves respectively both of them.
For each $R_{2m+1}$, the listed elements form a basis because we can divide by $x_i$ at the generic point of the component $C_i$ to get the basis of $R_{2m}$, which is a summand of the canonical ring. By Lemma \ref{lem: generated in low degree}, we have found all generators of $R$.

The relation $u^2=f_{12}(x_1,x_2,v)$ is induced by the relation in the canonical ring of type B (cf.~Proposition~\ref{prop: types} and Corollary~\ref{cor: bicanonical map}). 

Clearly, the Hilbert series of $\IC[x_1,x_2,v,u]/
\left(x_1x_2 = 0, u^2 = f_{12}(x_1,x_2,v)\right)$ matches that of $R$, so the proof is finished.
\end{proof}

\subsection
{The ``converse'' statement of Theorem  \ref{thm: all half canonical rings}}

In this section we are going to show that each ideal given in Table \ref{tab: half-canonical rings} defines a reduced Gorenstein curve $C$  of genus two such that 
$\omega_C = \ko_{\IP}(2)\restr{C}$  is invertible (and ample)
 and $\kl = \ko_{\IP}(1)\restr{C}\refl{1}$ is a ggs structure on $C$ satisfying the  given conditions on  $h^0$. 
 
 \subsubsection{$A(0)$}
 Let $ \IP= \IP(2,2,3,3)$ be the weighted projective space with variables and degrees $\mat{y_1 & y_2 & z_1 & z_2\\ 2 & 2& 3& 3}$ and 
 let $C\subset \IP(2,2,3,3)$ defined by the ideal    $$I=(z_1^2 - f_{6}(y_1, y_2),z_2^2-g_6(y_1, y_2))  \ \ \mbox{ with }  \ f_6 \neq 0, \ g_6 \neq 0 . $$
 Computing the Hilbert series of  $\IC[y_1,y_2, z_1, z_2]/ I$ we see that $C$ is a curve of genus two. 
 Moreover  we have a $2:1$ map $\psi \colon C\to \IP^1=\IP(2,2)$  given by $(y_1, y_2)$ and for a general  point $P\in \IP^1$, $\psi^{-1}(P)$ consists of two  different smooth points of $C$. 
 Therefore $C$ is reduced. 

 The set of points in $\pp$  where    $\OO_{\pp}(2)$ is not invertible is $\{y_1=y_2=0\}$ and 
  by our assumptions it does not intersect $C$.  By the very definition of the ideal $I$, which is generated by two polynomials of degree 6 having a relation in degree 12,   we have  the resolution
\[ 0 \to \ko_\IP(-12) \to \ko_\IP(-6)^{\oplus 2} \to \ko_\IP\to \ko_C \to 0.  \]
By \cite{Fletcher} we have $\omega_\IP = \ko_{\IP} (-2-2-3-3) = \ko_\IP(-10)$ and by Serre duality the subvariety $C$ of codimension two has canonical sheaf
\[\omega_C\cong \shext^2( \omega_\IP, \ko_C) =\shext^2( \ko_\IP(-10), \ko_C). \]
The above resolution computes this as  $\omega_C\isom\OO_{\IP}(2)\restr{C}$, which 
  is invertible at all points of $C$, namely $C$ is Gorenstein.  
Finally,  $h^0(\kl)=h^0(\ko_{\IP}(1))=0$. 

\subsubsection{$A(1)$}
 Let $\IP=  \IP(1,2,5)$ be the weighted projective space with variables and degrees $\mat{x & y & z\\ 1 & 2 & 5}$,
and let $C\subset \IP(1,2,5)$ defined by the ideal    $I=(z^2 - f_{10}(x,y))$, with $f_{10}\neq 0$.

Arguing as in the previous case,  $C$ is a reduced curve of genus two and we have   $\omega_C\cong \OO_{\IP}(2)\restr{C}$ is invertible and  $h^0(\kl)=h^0(\ko_{\IP}(1))=1$. 

\subsubsection{$B(0)$} 
 Let $\IP= \IP(2,2,3,3,4,5,5,6)$ be the weighted projective space with variables and degrees  \[\mat{y_1 & y_2 & z_1 & z_2 & v & t_1 & t_2 & u\\ 2 & 2& 3& 3 & 4 &  5& 5 & 6}\]
and let $C$ be  defined by the ideal    
$$I=\rk
  \begin{pmatrix}
y_1 & 0 & z_1 & t_1 \\
0 & y_2 & z_2 & t_2 \\
z_1 & z_2 & v & u \\
t_1 & t_2 & u & g_8
\end{pmatrix}
\leq 1  \ \ \mbox{ where } \ g_8 =\alpha v^2 +  v h_4(y_1,y_2) + k_8(y_1,y_2)  \ \mbox{ with } \alpha \neq 0 $$
We can write  $C=C_1\cup C_2$ where $C_i$ is the restriction of $C$ to $y_i=0$.  

\noindent Consider $C_1$: at points where $y_2\ne 0$  its ideal is generated by  $t_1,z_1$ and by the 2-by-2 minors of  the matrix
$
  \begin{pmatrix}
 y_2 & z_2 & t_2 \\
 z_2 & v & u \\
 t_2 & u & g_8
\end{pmatrix}
$. 

Computing  the resolution of this ideal (e.g., taking the Eagon--Northcott complex or using  MAGMA or Macaulay2, see the attached file), one sees that our condition on $g_8$ implies that  it is a prime  ideal, and 
 computing the Hilbert series  we see that it defines an irreducible curve $D_1$ with  $p_a(D_1)=1$. In particular $D_1$ is Gorenstein  by \cite[Lemma 1.19]{catanese82}. 
 An analogous computation for $C_2$ shows that $C$ contains $D_1\cup D_2$, where $D_1$ and $D_2$ are irreducible Gorenstein curves with arithmetic genus 1 and $C=D_1\cup D_2$ except possibly at the intersection of $C$ with $\{y_1=y_2=0\}$. This intersection consists only of the point  $P:= [0,0,0,0,1,0,0,\xi ]$, where $\alpha\xi^3=1$, and 
an explicit  computation shows  that  $C$ has a node at $P$
 since for a generic $g_8$ the two curves are smooth and transverse at $P$.   Hence $C$ is reduced at all points, $C_i=D_i$, $i=1,2$,  and $C=C_1\cup C_2$. 
Summing up, we have shown that $C$  is a  reduced Gorenstein curve of genus two of type $B$.   Hence $\omega_C$ is a line bundle.

Finally, we have $\omega_C\restr{C_i} \cong \OO_{C_i}(P)\cong \OO_{\IP}(2)\restr{C_i}$  (this can be seen for instance considering the hyperplane $(y_j=0)|_{C_i}$ for $i\neq j$).
 
 Note that $C$ is contained in the locus of $\pp$ where $\OO_{\IP}(2)$ is invertible, hence $\OO_{\IP}(2)\restr C$ is a line bundle. 
Since the  two components $C_1$ and $C_2$ of $C$ meet transversally only at one point, the natural map $\Pic(C)\to \Pic(C_1)\times \Pic(C_2)$ is an isomorphism, and therefore $\omega_C\cong \OO_{\IP}(2)$.\footnote{Alternatively, one can use Magma or Macaulay2 to show that $\omega_C=\OO_C(2)$.}
  Finally we have  $h^0(\kl)=h^0(\ko_{\IP}(1))=0$ by construction.

\subsubsection{$B(1)$}
 Let $\IP=  \IP(1,2,3,4,5,6)$ be the weighted projective space with variables and degrees 
 \[\mat{ x&  y & w & v & z & u\\ 1&2& 3& 4& 5& 6},\]
and let $C\subset \IP(1,2,3,4,5,6)$ be defined by the ideal

\[ \rk \begin{pmatrix}0&y&w&z\\x&w&v&u\end{pmatrix}\leq 1,
\begin{array}{rcl}
z^2 & = & yg_8(y,v) \\
zu & = & wg_8(y,v) \\
u^2 & = & vg_8(y,v) + x^4h_8(x,v)
\end{array}  
\]
where  $g_8 = \alpha v^2 + vh_4(y) + k_8(y)$ with $\alpha \neq 0$.

Arguing as in the previous case we define $C_1$, resp. $C_2$, as the restriction of $C$ to $x=0$, resp. $y=0$. Computations similar to the previous ones show   that the $C_i$ are irreducible Gorenstein curves with $p_a(C_i)=1$, they meet 
transversely only at the point $P:=(0:0:0:0:1:0:0:\xi )$, where $\alpha \xi^3=1$,  and that $C$ is reduced and therefore equal to $C_1\cup C_2$. 
So $C$ is a  Gorenstein genus 2 curve of type $B$.  Finally one checks that $\omega_C$ is isomorphic to $\OO_{\pp}(2)\restr C$ and $h^0(\kl)=h^0(\ko_{\IP}(1))=1$  by the same arguments as in case $B(0)$.

\subsubsection{$B(2)$}
 Let $\IP=\IP(1,1,4,6)$ be the weighted projective space with variables and degrees 
 \[\mat{ x_1& x_2 & v & u \\ 1&1& 4& 6},\]
and let $C\subset \IP(1,1,4,6)$ defined by the ideal    $$I=(x_1x_2,\ \  u^2 - f_{12}(x_1, x_2, v))
\ \ \mbox{where $v^3$ appears in $f_{12}$ with non-zero coefficient. }$$

One argues exactly as in cases $B(0)$ and $B(1)$ and proves that $C$ is a reduced  Gorenstein curve of genus 2 that can be written $C=C_1\cup C_2$, where  $C_i=C\cap \{x_i=0\}$, $i=1,2$. The curves $C_i$ are irreducible Gorenstein with $p_a(C_i)=1$ and meet transversely at exactly one point. The  line bundle $\omega_C$ is the restriction of $\OO_{\pp}(2)$ and $h^0(\kl)=h^0(\ko_{\IP}(1))=2$.

\hfill\break{\bf Data availability}

 During the work on this publication, no data sets were generated, used or
analyzed. Thus, there is no need for a link to a data repository.

\hfill\break{\bf Declarations}

{\bf Conflict of interest. }  On behalf of all authors, the corresponding author states that there is
no conflict of interest.

 \def\cprime{$'$}


\end{document}